\documentclass[leqno,11pt]{amsart}

\usepackage{amsmath,amsfonts,amsthm,mathrsfs,amssymb}
\usepackage[all]{xy}
\usepackage{enumerate}
\usepackage{graphicx}
\usepackage{bm}
\usepackage{epsfig}
\usepackage{textcomp}
\usepackage{multicol}

\usepackage[lmargin=3.2cm,rmargin=3.2cm,tmargin=4cm,bmargin=4.5cm]{geometry}
\usepackage{setspace}

\setstretch{1.15}
\theoremstyle{plain}
\newtheorem{thm}{Theorem}[section]
\newtheorem{cor}[thm]{Corollary}
\newtheorem{lem}[thm]{Lemma}

\newtheorem{prop}[thm]{Proposition}

\theoremstyle{definition}
\newtheorem{defn}[thm]{Definition}
\newtheorem{exmp}[thm]{Example}
\newtheorem{rem}[thm]{Remark}

\newtheorem{ques}[thm]{Question}
\newtheorem{nota}[thm]{Notation}


\newcommand\lto{\longrightarrow}
\newcommand\nto{\stackrel}

\def\ZZ{{\bf Z}}

\def\PP{{\bf P}}
\def\RR{{\bf R}}
\def\CC{{\bf C}}

\newcommand\Cc{\mathcal{C}}

\def\ker{\mathrm{ker}}

\newcommand\coker{\mathrm{coker}}

\def\deg{\mathrm{deg}}

\newcommand\ann{\mathrm{ann}}

\newcommand\Hom{\mathrm{Hom}}

\def\dim{\mathrm{dim}}

\newcommand\Ext{\mathrm{Ext}}

\newcommand\tor{\textnormal{Tor}}

\newcommand\X{\textbf{X}}

\def\h{\textbf{h}}

\newcommand\Supp{\mathrm{Supp}}

\def\kos{\mathbb{K}}
\def\k.{\mathbb{K}_{\bullet}}
\def\h.{\mathbb{H}_{\bullet}}
\def\HHH{\mathbb{H}}

\def\FF{\mathcal F}

\def\CCC{\mathscr C}

\def\EE{\mathcal E}

\def\SS{\mathcal S}

\def\EEE{\mathscr E}


\def\d{\delta}

\def\pp{\mathfrak{p}}
\def\mm{\mathfrak{m}}

\def\Ss{\mathfrak{S}}

\def\pp{\mathfrak{p}}

\def\mm{\mathfrak{m}}

\def\P1{\PP^1}

\def\C.{C_\bullet}

\def\G{G}

\newcommand\cd{\textnormal{cd}}


\def\F.{F_\bullet}

\newcommand\reg{\textnormal{reg}}

\newcommand\ra{\rightarrow}
\newcommand\fin{\textnormal{end}}

\newcommand\colimit[1]{\underset{#1}{\underset{\lto}{\text{lim}}} \ }


\title{{Castelnuovo Mumford Regularity with respect to multigraded ideals}}
 
\author[Nicol\'{a}s Botbol]{Nicol\'{a}s Botbol} \thanks{NB was partially supported by UBACYT 20020100100242, CONICET PIP 112-200801-00483, and ANPCyT PICT 2008-0902, CONICET PhD and Postdoctoral Fellowships, Argentina; Marie-Curie Network ``SAGA" FP7 contract PITN-GA-2008-214584, EU}
\address{Departamento de Matem\'atica, FCEN, Universidad de Buenos Aires, Argentina }
\email{nbotbol@dm.uba.ar}
\urladdr{http://mate.dm.uba.ar/~nbotbol/}

\author[Marc Chardin]{Marc Chardin}
\address{Institut de Math\'ematiques de Jussieu.
UPMC, Boite 247, 4, place Jussieu,
F-75252 PARIS CEDEX 05}
\email{chardin@math.jussieu.fr}
\urladdr{http://people.math.jussieu.fr/~chardin/}

\date{\today}


\begin{document}

\begin{abstract}
In this article we extend a previous definition of Castelnuovo-Mumford regularity for modules over an algebra graded by a finitely generated abelian group.

Our notion of regularity is based on Maclagan and Smith's definition, and is extended first by working over any commutative base ring, and 
second by considering local cohomology with support in an arbitrary finitely generated graded ideal $B$, obtaining, for each $B$, a $B$-regularity region. The first extension provides a natural approach for working with families of sheaves or of graded modules, while the second opens new applications. 

We provide tools to transfer knowledge in two directions. First to deduce some information on the graded Betti numbers from the knowledge of regions where the local cohomology with support in a given graded ideal vanishes. This is one of our main results. Conversely,  vanishing of local cohomology with support in any graded ideal is deduced from the shifts in a free resolution and the local cohomology of the polynomial ring. Furthermore, the flexibility of treating local cohomology with respect to any $B$ open new possibilities for passing information.

We provide new persistence results for the vanishing of local cohomology that extend the fact that weakly regular implies regular in the classical case, and we give sharp estimates for the regularity of a truncation of  a module.

In the last part, we present a result on Hilbert functions for multigraded polynomial rings, that in particular provides a simple proof of the Grothendieck-Serre formula.
\end{abstract}

\maketitle

\noindent{\footnotesize Math.\ Sub.\ Class.: 13D45, 13D02, 13D07, 14M25, 14B15.}
\medskip 


\section{Introduction.}\label{sec:introCMReg}

Castelnuovo-Mumford regularity is a fundamental invariant in commutative algebra and algebraic geometry. In the classical case
(standard graded algebras) it measures the maximum degree of the syzygies and provides a quantitative version of Serre vanishing theorem
for the associated sheaf.
It in particular bounds the largest degree of the minimal generators and the smallest twist for which the sheaf is generated by its global sections. It has been used as a measure for the complexity of computational problems in algebraic geometry and commutative algebra (see for example \cite{EG84} or \cite{BaMum}).

The two most frequent definitions of $\ZZ$-graded Castelnuovo-Mumford regularity are the one in terms of graded Betti numbers and the one using local cohomology. The equivalence of this two definitions is one of the main basic results of the theory. For a wider discussion about regularity, we refer to  \cite{MumRed} or to the survey of Bayer and Mumford \cite{BaMum} and the more recent one \cite{Ch07}.

\medskip

A multigraded extension of Castelnuovo-Mumford regularity for modules over a polynomial ring over a field was introduced by Hoffman and Wang in a special case in \cite{HW04}, and later by Maclagan and Smith in a more general setting in \cite{MlS04}. 

The main motivation for studying regularity over multigraded polynomial rings was from toric geometry. For a toric variety $X$ associated to a fan $\Delta$, the homogeneous coordinate ring, introduced in \cite{Cox95}, is a polynomial ring $R$ graded by the divisor class group $G$ of $X$ together with a monomial ideal $B_\Delta$ generated by monomials corresponding to the complement of faces in $\Delta$. The dictionary linking the geometry of $X$ with the theory of $G$-graded $R$\nobreakdash-modules leads to geometric interpretations and applications for multigraded regularity. 

In \cite{HW04}, Hoffman and Wang define the concept of regularity for bigraded modules over a bigraded polynomial ring motivated by the geometry of $\P1\times \P1$. They prove analogs of some of the classical results on $m$-regularity for graded modules over polynomial algebras. 
In \cite{MlS04}, Maclagan and Smith develop a multigraded variant of Castelnuovo-Mumford regularity also motivated by toric geometry, working with $G$-graded modules over a Cox ring $(S,G,B_\Delta )$.
\medskip

In this article we introduce a further generalization of this notion, first by working over any commutative base ring, and 
second by considering local cohomology with support in any finitely generated graded ideal. The first extension provides a
natural approach for working with families of sheaves or of graded modules and the second provides a more flexible and powerful
tool for applications.

Our definition of multigraded regularity is given in terms of the vanishing of graded components of local cohomology with respect to a graded ideal $B$, following \cite{HW04} and \cite{MlS04}.

We show how to transfer knowledge 
in two directions. First, to deduce some information on the graded Betti numbers from the knowledge of regions
where the local cohomology with support in a given graded ideal vanishes.
Second to assert the vanishing of local cohomology of a graded module with support in any graded ideal from the shifts in a free resolution of the graded module and the cohomology of the polynomial ring. 

The second direction was shown in \cite[7.2]{MlS04} --where they remark, besides their proof, the result directly follows from a standard spectral sequence argument--
but there was only partial results concerning the reverse direction. 

To go from regularity to resolutions, Maclagan and Smith proved the existence of a complex 
with $B$-torsion homology and shifts bounded in terms of $\reg_B (M)$, that provides a resolution at the sheaf level of
a truncation of $M$ giving rise to the same sheaf as $M$.
Our results give estimates for the shifts in a true resolution of the module itself. They are sharper and valid for a much larger class of ideals $B$.

In the standard $\ZZ^n$-graded case, H\`a \cite{Ha07} and Sidman {\it et al.} \cite{STW06} gave estimates for the multigraded Betti numbers in terms of two different notions of regularity.
Their results follow from our approach (with $G=\ZZ$): in one case taking as base ring $S=k[\X_2,\hdots,\X_{n}]$, $R=S[\X_1]$, graded by the degree in $\X_1$ and $B=R_+$, to estimate $\deg_{\X_1}$ of the syzygies (and similarly for the other groups of variables). In the other case by taking $S=k$ and coarsening the degree using a linear form $l:\ZZ^n\to G$, with $B$ generated by all variables.

\medskip
This article is organized in three sections.
In section $2$ we gather some basic facts on local cohomology. Local
cohomology, defined as the cohomology of the \v Cech complex constructed on a finite set of generators of $B$ only depend on the radical of the ideal $B$, and correspond the sheaf cohomology with support in $V(B)$ (see \cite[Chap.\ $2.3$]{HartLC67} or \cite{CJR}) but not always coincide with the right derived functors of the left exact functor $H^0_B$. We show that both notions coincide when $B$ is a finitely generated monomial ideal in a polynomial ring, hence $H^i_B(M)$ coincides 
with $\colimit{t}\Ext^i_R(R/B^t,M)$, and the approach of Musta{\c{t}}\u{a} in \cite{Mus00} provides a way of computing 
this limit for a monomial ideal.

The key result in Section $3$ is Theorem \ref{ThmRegHGral} that allows to deduce vanishing of Tor modules from
vanishing of local cohomology, and vice-versa.
More precisely, Theorem \ref{ThmRegHGral} applied to  a graded free resolution of a module $M$, provides bounds for the supports of $H^i_B(M)$ in terms of the shifts in the resolution and vanishing regions for the local cohomology of the polynomial ring (see for example Theorem \ref{lemSuppHi}).
The same result applied to a Koszul complex, gives bounds for the supports of Tor modules in terms of the vanishing regions of the modules $H^i_B(M)$ (see for example Theorem \ref{ThmLCtoTor} and \ref{CorLCtoTor}). As in the standard $\ZZ$-graded case, a complex, extending a presentation, that has positive homology of small cohomological dimension can be used in place of a resolution to bound the support of local cohomology (and hence the regularity). The key results on persistence of local cohomology 
vanishing are derived by very similar arguments (see Lemma \ref{wR} and Corollary \ref{pers1dir}).

In Section $4$ we give the definition of regularity and we refine serval results obtained by D. Maclagan, H.\ T.\ H\`a, J.\ Sidman, G.\ Smith, B.\ Strunk, A.\ Van Tuyl and H.\ H.\ Wang, in \cite{MlS04, STW06, ST06, Ha07, HS07}. Before illustrating some differences on an example of Sidman {\it et al.} \cite[Ex.\ 1.1]{STW06}, we shall give
some definitions and notations.

Let $S$ be a commutative ring, $\G$ an abelian group and $R:=S[X_1,\hdots,X_n]$, with $\deg(X_i)=\gamma_i$ and $\deg (s)=0$ for $s\in S$. Consider $B\subseteq (X_1,\hdots,X_n)$ a finitely generated graded $R$-ideal and $\Cc$  the monoid generated by $\{\gamma_1,\hdots, \gamma_n\}$, we propose in Definition \ref{defRegLC} that for $\gamma\in \G$, and for a graded $R$-module $M$ is  \textsl{weakly $\gamma$-regular} if 
\[
 \gamma \not\in \bigcup_{i} \Supp_\G(H^i_B(M))+\FF_{i-1}.
\] 
where   $\FF_i :=\{ \gamma_{j_1}+\cdots + \gamma_{j_i}\ \vert\ j_1\leq\cdots \leq j_i\}$ for $i>0$,  $\FF_{0}:=\{ 0\}$, $\FF_{-1}:=-\FF_1$ and $\FF_i=\emptyset$ else.
If further, $M$ is weakly $\gamma'$-regular for any $\gamma'\in \gamma +\Cc$, then $M$ is
\textsl{$\gamma$-regular} and 
\[
 \reg_B (M):=\{ \gamma\in \G\ \vert\ M\ {\rm is}\ \gamma {\rm -regular}\} .
\]

It follows from the definition that $\reg_B (M)$ is the maximal set $\SS$ of elements in $\G$ such that $\SS +\Cc =\SS$ and $M$ is $\gamma$-regular for any $\gamma\in \SS$.


\begin{exmp}[Example 1.1 in \cite{STW06}]\label{ex11}
Take $I=(X_0X_1,Y_0Y_1)$ a complete intersection ideal in $R=k[X_0,X_1,Y_0,Y_1]$. If $\mm_X=(X_0,X_1)$, $\mm_Y=(Y_0,Y_1)$, $R_+=\mm_X + \mm_Y$, and $B=\mm_X \cap \mm_Y$, they show that $\reg_B(R/I)=(1,1)+\ZZ^2_{\geq 0}$. This can be obtained, as the authors observe, by computing the Hilbert function of $R/I$ and applying Proposition 6.7 in \cite{MlS04}, or by relating $H^i_B(R/I)_\mu$ with sheaf cohomology  (cf.\ \cite{Mus02}), or just 
from Mayer-Vietoris exact sequence.

Since $I$ is a complete intersection, $R/I$ is Cohen-Macaulay of dimension two, hence $H^i_{R_+}(R/I)=0$ for all $i\neq 2$ and $H^2_{R_+}(R/I)=\omega_{R/I}^*$.

\begin{minipage}{8cm}
We get that $\reg_{R^+}(R/I)$ is the $\ZZ_{\geq 0}^2$-stable part of 
\[
 \complement (\Supp_G(\omega_{R/I}^*)+\FF_1),
\]

where $\Supp_G(\omega_{R/I}^*) \subseteq \ZZ_{\leq 0}^2$. 

Thus,
\[
 \reg_{R^+}(R/I)=(\ZZ_{\geq 2}\times \ZZ) \cup(\ZZ\times \ZZ_{\geq 2}) \cup\{(1,1)\},
\]
as is shown on the right.

\end{minipage}
\   \
\hfill
\begin{minipage}{5.8cm}
\includegraphics{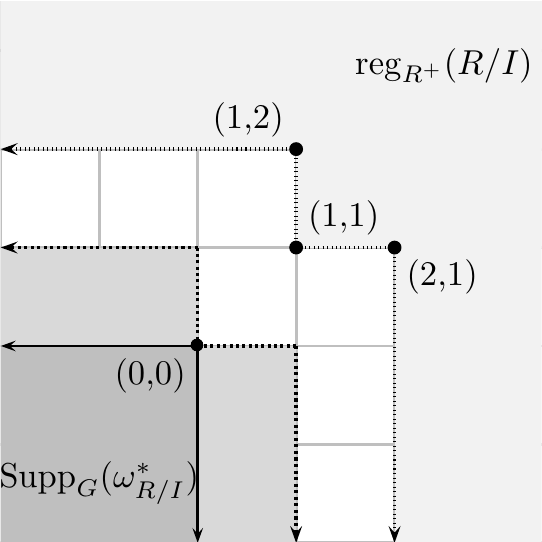}

\end{minipage}

Theorem \ref{ThmLCtoTor2} gives that 
\[
 \Supp_{\ZZ^2}(\tor^R_j(R/I,k)) \subset ((1,0) +\EE_j+\complement \reg_{R^+} (R/I))\cap ((0,1) +\EE_j+\complement \reg_{R^+} (R/I))
\]

which is described in the figure on the left, while applying Theorem \ref{ThmLCtoTor2} to $\reg_{B} (R/I)$ one would get a less sharp region described in the figure on the right:

\begin{center}
 \includegraphics{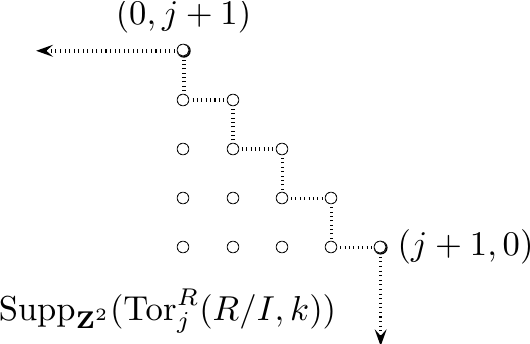}\qquad  \includegraphics{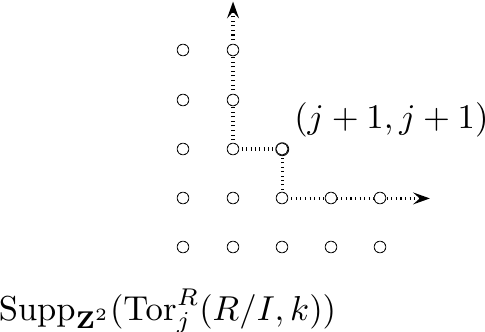}
\end{center}

We see in this example that the estimate of Betti numbers is much sharper 
using $R_+$-regularity in place of $B$-regularity. This is a general phenomenon.
However, it is not always easy to compute or estimate the $R_+$-regularity (which is 
defined for any grading, as well as the one relative to any monomial ideal). In a
toric setting, the coarser information on Betti numbers given by the regularity relative 
to the irrelevant ideal $B$ attached to the Cox ring is often more accessible (see \cite{Bot10} 
for an example in this direction). 

The fact that one can, in many cases, provide finite regions for the support of Tor modules from $R_+$-regularity is explained in Remark \ref{remSuppFiniteR+}. This in particular applies to projective toric varieties. 
\end{exmp}

The results of section 3 on persistence of vanishing properties for local cohomology gives Theorem \ref{wRtoR} that extend the fact that, in the classical case, weakly regular implies regular. Our argument provides a very short proof of this fact in the classical case, that was first established by
Mumford in \cite{MumRed} for finitely generated modules, with a field as base ring, and in full generality by Jouanolou (see \cite[2.2]{CJR}).
Our persistence result extend and refine the results of \cite{MlS04} on this issue. They show that the vanishing of local cohomology modules in a finite number of homological and internal degrees provides a regularity criterion, as in the classical case.\\

The second and third subsections are dedicated to a detailed study of how one can pass information from
regularity to shifts in a free resolution and conversely. These results rely heavily on the material in the 
third section and are the core of this section. \\

Subsection $4.4$ is dedicated to extend results in \cite{MlS04} and \cite{STW06} providing sharper finite subsets of the grading group $G$ that bound the degrees of the minimal generators of a minimal free resolution of a truncation of $M$. In Lemma \ref{tronctor} we provide a multigraded variant for the bounds on the shifts in a minimal free resolution of $M_{\geq d}$ in the classical case. Here ``$M_{\geq d}$" is replaced by $M_{\Ss }$, $\Ss$ being a $\Cc$-stable subset of $G$.

The result  takes a simple form when $\Ss\subseteq \reg_B(M)$, giving Theorem \ref{ThmLCtoTor3}. In particular, taking $\Ss=\mu+\Cc$ with $\mu\in  \reg_B(M)$, we get as corollary Theorem 5.4 in \cite{MlS04}, as well as several results in \cite{STW06} and in Section 7 of \cite{MlS04}. Also notice that Lemma \ref{regtrunc} allows one to study regularity with respect to more general monoids 
than $\Cc$ in $G$, similarly as in the approach of Maclagan and Smith. 

The fifth subsection of Section $4$ consists in a very simple example where we illustrate that the vanishing of the local cohomology modules of a principal ideal depends not only on the degree but on the generator itself, opposite to what happens in the classical theory. For simplicity, we treat the case of a form of bidegree $(1,1)$, with $R$ and $B$ as in Example \ref{ex11}. The vanishing of local cohomology depends on the factorization of the form, and the same kind of phenomenon occur for any bidegree. 
In this example  the bounds obtained on the support of Tor modules are pretty sharp. It shows that a  $R$-module with regularity $\ZZ^2_{\geq 0}=\reg_B(R)$ can have a first syzygy of bidegree $(1,1)$. It also shows that two modules with the same resolution may have different regularity at level $2$, and we leave the following open question:

\begin{ques}
Is there a ring $R$, an ideal $B$ and two modules $M$ and $N$ satisfying: $\Supp_\G(\tor^R_i(M,k))=\Supp_\G(\tor^R_i(N,k))$ for all $i$, and $\reg_B(M)\neq \reg_B(N)$?
\end{ques}

Castelnuovo-Mumford regularity has applications to Hilbert functions of graded modules, to which we dedicate the last subsection of this paper. Such questions comes intrinsically from the algebraic perspective, but also motivated by the geometry behind. 
  
The study of Hilbert functions over standard graded algebras has taken a central role in commutative algebra and algebraic geometry since the famous paper of Hilbert \cite{Hil90} in 1890.

The Lemma \ref{lemClassCC} is the key ingredient of a short proof of Grothendieck-Serre formula for standard multigraded polynomial rings, that was proved in \cite[Prop.\ 2.4.3]{ColTesis}). This lemma shows in particular that if the function $F_R$ (see \ref{secHF} \eqref{eqFM}) of a multigraded polynomial ring $R$ belongs to a given class, closed under shifts and addition, then, so does $F_M$ for any finitely generated $R$-module $M$. The computation of $F_R$ for a given grading and monomial ideal $B$ 
is a simple task --the standard multigraded case follows-- so that this method also allows to treat any given $G$ and monomial ideal $B$.
\medskip

\section{Local Cohomology}

Let $R$ be a commutative ring and $B$ be a finitely generated ideal. One can define the local
cohomology groups of an $R$-module $M$ as the homologies of the \v Cech complex constructed on a finite set of generators of $B$.
These homology groups only depend on the radical of the ideal $B$, and correspond the sheaf cohomology with support in 
$V(B)$ (see \cite[Chap.\ $2.3$]{HartLC67} or \cite{CJR}). This in particular implies that one has a Mayer-Vietoris sequence for \v Cech
cohomology. This cohomology commutes with arbitrary direct sums. It coincide with the right derived functors of
the left exact functor $H^0_B$ in several instances (notably when $R$ is Noetherian or $B$ is generated by a regular sequence in $R$). From the Mayer-Vietoris sequence, it also follows that both coincides when $B$ is a 
finitely generated monomial ideal in a polynomial ring (see below).

\medskip

\subsection{Local cohomology with support on monomial ideals}

In this section we study the support of local cohomology modules with support on a monomial ideal $B$. Let $R:=S[X_1,\hdots,X_n]$ be a polynomial ring over a commutative ring $S$, $\deg(X_i)=\gamma_i\in \G$ for $1\leq i\leq n$ and $\deg (s)=0$ for $s\in S$. 

Assume that $B\subseteq (X_1,\hdots,X_n)$ is a monomial $R$-ideal. Then $B$ is finitely generated, and since local cohomology with support in the monomial ideals ideal $B$ and $\sqrt B$ coincide, we can assume that $B=\sqrt B$, hence $B=\bigcap_{i=1}^t J_i$, where $J_i=(X_{i_1},\hdots,X_{i_{s(i)}})$ is an $R$-ideal. A motivating example is the Cox ring of a toric variety,  see \cite{Cox95}.

\begin{lem}\label{lemLCMonomial}Let $M$ be a graded $R$-module, then
\begin{equation}\label{eqLCMonomial}
\Supp_\G(H_B^\ell (M)) \subset \bigcup_{1\leq i \leq t}\bigcup_{1\leq j_1<\cdots<j_i \leq t} \Supp_\G(H^{\ell+i-1}_{J_{j_1}+\cdots+J_{j_i}}(M)).
\end{equation}
 
\end{lem}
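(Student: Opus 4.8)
The plan is to exploit the fact that $B = \bigcap_{i=1}^t J_i$ with each $J_i = (X_{i_1},\dots,X_{i_{s(i)}})$ generated by a subset of the variables, and to compute \v Cech cohomology with support in $B$ via a Mayer--Vietoris type spectral sequence. Since local cohomology is \v Cech cohomology and depends only on the radical, and since $V(B) = \bigcup_{i=1}^t V(J_i)$, the natural tool is the Mayer--Vietoris spectral sequence associated to this covering of the closed set $V(B)$: one builds the \v Cech-style complex of the covering $\{V(J_i)\}$ whose term in position $i-1$ is $\bigoplus_{j_1 < \cdots < j_i} H^\bullet_{J_{j_1}+\cdots+J_{j_i}}(M)$, noting that $J_{j_1}+\cdots+J_{j_i}$ is the ideal cutting out $V(J_{j_1})\cap\cdots\cap V(J_{j_i})$. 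This is legitimate because each $J_{j_1}+\cdots+J_{j_i}$ is again a monomial ideal generated by variables, so all the cohomologies in sight agree with the derived-functor versions and the Mayer--Vietoris machinery (which we are granted holds for \v Cech cohomology, as recorded in Section 2) applies.

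Concretely, first I would set $U = \Spec R \setminus V(B)$ and, for each $i$, $U_i = \Spec R \setminus V(J_i)$, so $U = \bigcup U_i$. The \v Cech-to-derived-functor spectral sequence for the affine covering $\{U_i\}$ of $U$, combined with the long exact sequence relating $H^\bullet_B$, $H^\bullet(\Spec R, -)$ and $H^\bullet(U,-)$, yields a spectral sequence
\[
E_1^{p,q} = \bigoplus_{j_0 < \cdots < j_p} H^{q}_{J_{j_0}+\cdots+J_{j_p}}(M) \To H^{p+q}_B(M),
\]
where the sum over a $(p+1)$-element subset $\{j_0,\dots,j_p\}$ corresponds to the intersection $V(J_{j_0})\cap\cdots\cap V(J_{j_p}) = V(J_{j_0}+\cdots+J_{j_p})$. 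Reindexing with $i = p+1$ running from $1$ to $t$ and $\ell = p+q$, the contributions to $H^\ell_B(M)$ come from terms $H^{\ell - i + 1}_{J_{j_1}+\cdots+J_{j_i}}(M)$ with $1\le j_1 < \cdots < j_i \le t$. Since a graded module appearing as a subquotient of a graded module has support contained in that of the bigger module, and since everything in the spectral sequence is $\G$-graded with graded differentials, $H^\ell_B(M)$ is a subquotient of $\bigoplus E_1^{p,q}$ along the diagonal $p+q=\ell$, which gives exactly the claimed support containment in \eqref{eqLCMonomial}.

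The steps, in order: (1) record that for any monomial ideal $B'$ generated by variables, \v Cech cohomology $H^\bullet_{B'}$ agrees with the derived functors and behaves well under the covering constructions — this is the content of the preamble of Section 2 and the remark that both notions coincide for monomial ideals in a polynomial ring; (2) write down the Mayer--Vietoris / \v Cech spectral sequence for the covering of $V(B)$ by the $V(J_i)$, identifying the $E_1$ page as above and checking the intersections $V(J_{j_1})\cap\cdots\cap V(J_{j_i}) = V(J_{j_1}+\cdots+J_{j_i})$, which is immediate since adding ideals corresponds to intersecting closed sets; (3) observe the spectral sequence is one of $\G$-graded modules with graded maps, so each $H^\ell_B(M)$ is a graded subquotient of the corresponding diagonal of $E_1$; (4) conclude $\Supp_\G(H^\ell_B(M)) \subseteq \bigcup \Supp_\G$ of the $E_1$ entries on that diagonal, which is precisely the right-hand side of \eqref{eqLCMonomial}.

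The main obstacle I anticipate is purely bookkeeping rather than conceptual: getting the index shift right (that a $p$-fold intersection, i.e.\ an $(i{=}p{+}1)$-element subset, shifts cohomological degree by $i-1 = p$) and making sure the Mayer--Vietoris spectral sequence is set up for \v Cech cohomology in this possibly non-Noetherian, arbitrary-base-ring setting — which is exactly why the paper first takes the trouble to note that \v Cech cohomology has a Mayer--Vietoris sequence and coincides with derived functors for monomial ideals. One could alternatively avoid the full spectral sequence and argue by induction on $t$ using the two-term Mayer--Vietoris long exact sequence $\cdots \to H^\ell_{J_1 + B''}(M) \to H^\ell_{J_1}(M)\oplus H^\ell_{B''}(M) \to H^\ell_{J_1 \cap B''}(M) \to H^{\ell+1}_{J_1+B''}(M)\to\cdots$ where $B'' = \bigcap_{i\ge 2} J_i$, peeling off one $J_i$ at a time; the inductive hypothesis handles $B''$ and $J_1 + B'' = \bigcap_{i\ge 2}(J_1 + J_i)$, and tracking the degree shifts through the long exact sequence reproduces the stated union. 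I would likely present the inductive version as it sidesteps spectral-sequence convergence subtleties over a general base ring, with the only real care needed being the combinatorial identity organizing which subsets $\{j_1,\dots,j_i\}$ appear at each stage.
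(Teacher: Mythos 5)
Your proposal is correct, and the inductive fallback you sketch at the end is essentially the paper's proof verbatim: the paper argues by induction on $t$, peeling off one $J_i$ via the two-term Mayer--Vietoris long exact sequence and then using the identity $J_{\leq t-1}+J_t=(J_1+J_t)\cap\cdots\cap(J_{t-1}+J_t)$ to feed the connecting term back into the inductive hypothesis (with $\ell$ replaced by $\ell+1$), exactly the degree bookkeeping you describe. So your preferred route and the paper's coincide. The spectral-sequence formulation you lead with, with
\[
E_1^{p,q}=\bigoplus_{j_0<\cdots<j_p} H^{q}_{J_{j_0}+\cdots+J_{j_p}}(M)\Rightarrow H^{p+q}_B(M),
\]
is a legitimate one-shot compression of the same iterated Mayer--Vietoris argument: the diagonal $p+q=\ell$ at $E_1$ already dominates the support of the abutment (the filtration is bounded since $0\leq p\leq t-1$), and reindexing by $i=p+1$ gives precisely the right-hand side of the lemma. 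The one thing to be mildly careful about, which you flag yourself, is that over an arbitrary base ring one should derive this spectral sequence purely from the \v{C}ech-level Mayer--Vietoris machinery rather than from derived functors; since the paper establishes only the two-term long exact sequence directly, the induction is indeed the cleaner presentation, and that is what the paper does.
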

\begin{proof}
 Let $B=\bigcap_{i=1}^t J_i$. We induct on $t$. The result is obvious for $t=1$, thus, assume that $t> 1$ and that \eqref{eqLCMonomial} holds for $t-1$. Write $J_{\leq t-1}:= J_1\cap\cdots\cap J_{t-1}$. The, for $t>1$ and $\ell\geq 0$ consider the Mayer-Vietoris long exact sequence of local cohomology 
\begin{equation*}\label{eqMVMonomial}
 \cdots \to H^\ell_{J_{\leq t-1}+J_t}(M) \to  H^\ell_{J_{\leq t-1}}(M)\oplus H^\ell_{J_t}(M) \to  H^\ell_{B}(M) \to  H^{\ell+1}_{J_{\leq t-1}+J_t}(M) \to \cdots.
\end{equation*}
Hence, $\Supp_\G(H_B^\ell (M)) \subset \Supp_\G(H^\ell_{J_{\leq t-1}}(M))\cup\Supp_\G(H^\ell_{J_t}(M)) \cup\Supp_\G(H^{\ell+1}_{J_{\leq t-1}+J_t} (M))$. By inductive hypothesis 
\[
\Supp_\G(H^\ell_{J_{\leq t-1}}(M)) \subset \bigcup_{1\leq i \leq t-1}\bigcup_{1\leq j_1<\cdots<j_i \leq t-1} \Supp_\G(H^{\ell+i-1}_{J_{j_1}+\cdots+J_{j_i}}(M)).
\] Since $J_{\leq t-1}+J_t=(J_1+J_t)\cap\cdots\cap (J_{t-1}+J_t)$, again by inductive hypothesis we obtain that $\Supp_\G(H^{\ell+1}_{J_{\leq t-1}+J_t}(M))\subset \bigcup_{1\leq i \leq t-1}\bigcup_{1\leq j_1<\cdots<j_i \leq t-1} \Supp_\G(H^{\ell+i-1}_{J_{j_1}+\cdots+J_{j_i}+J_{j_t}}(M))$ which complets the proof.
\end{proof}

\begin{rem}
The exact sequence 
\begin{equation*}
  H^\ell_{J_1\cap\cdots\cap J_{t-1}}(M)\oplus H^\ell_{J_t}(M) \to  H^\ell_{B}(M) \to  H^{\ell+1}_{(J_1+J_t)\cap\cdots\cap (J_{t-1}+J_t)}(M) 
\end{equation*}
applied for $\ell\geq 1$ and $M$ injective shows, by recursion on $t$, that $H^\ell_{B}(M)=0$ in this 
case (the case $t=1$ is classical and follows from the fact that $B$ is then generated by a regular sequence).
This in turn shows the following result (by reduction reduction to the case of polynomial rings
in finitely many variables) 
\end{rem}

\begin{thm}
Let $R$ be a polynomial ring over a commutative ring $S$ and $B$ be a finitely generated monomial $R$-ideal.
Then the \v Cech cohomology functor $H^\ell_{B}$ is the $\ell$-th right derived functor of $H^0_{B}$.
\end{thm}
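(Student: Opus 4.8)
The plan is to reduce the statement to the case of a polynomial ring in finitely many variables, and then to use the Remark preceding the Theorem together with a standard characterization of right derived functors via acyclic (here, injective) resolutions. First I would recall that for a finitely generated ideal, \v Cech cohomology is computed as the cohomology of the \v Cech complex on a finite generating set, and that this is a $\delta$-functor which agrees with $H^0_B$ in degree $0$; so to identify it with the right derived functor $R^\ell H^0_B$ it suffices to show that \v Cech cohomology $H^\ell_B$ vanishes on injective $R$-modules for $\ell\geq 1$ (an effaceability/universal $\delta$-functor argument). Equivalently, one computes $R^\ell H^0_B(M)$ by an injective resolution $M\to I^\bullet$ and checks that the \v Cech cohomology of $I^\bullet$ is concentrated in degree $0$, using that \v Cech cohomology commutes with the relevant operations.

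Next I would address the reduction to finitely many variables, which is the point flagged as ``reduction to the case of polynomial rings in finitely many variables.'' A finitely generated monomial ideal $B\subseteq R=S[X_1,\dots,X_n]$ involves only finitely many of the $X_i$; if $R_0=S[X_{i_1},\dots,X_{i_r}]$ is the subring generated by the variables actually occurring in a generating set of $B$, then $B=B_0 R$ for a monomial ideal $B_0\subseteq R_0$, and $R$ is a polynomial ring (hence flat, in fact free) over $R_0$. Since \v Cech cohomology is computed by an explicit complex built from localizations, and localization commutes with the base change $-\otimes_{R_0}R$, one has $H^\ell_B(M)\cong H^\ell_{B_0}(M)$ for every $R$-module $M$ viewed as an $R_0$-module; moreover an injective $R$-module need not be injective over $R_0$, so the cleaner route is: it suffices to prove that $H^\ell_{B_0}$ vanishes on injective $R_0$-modules, and then observe that $H^0_B$ on $R$-modules factors appropriately — but in fact it is simplest to just prove the vanishing statement ``$H^\ell_B(E)=0$ for $\ell\geq 1$ and $E$ an injective $R$-module'' directly, reducing the ideal to finitely many variables only to invoke a Noetherian-style decomposition $B=\sqrt B=\bigcap_{i=1}^t J_i$ with each $J_i$ generated by a subset of the variables, which is available once $B$ lives in finitely many variables (even if $S$ is non-Noetherian, a monomial ideal in finitely many variables is finitely generated and its radical is such an intersection).

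The heart of the argument is then exactly the Remark: run the Mayer--Vietoris sequence
\[
H^\ell_{J_1\cap\cdots\cap J_{t-1}}(E)\oplus H^\ell_{J_t}(E)\to H^\ell_{B}(E)\to H^{\ell+1}_{(J_1+J_t)\cap\cdots\cap(J_{t-1}+J_t)}(E)
\]
for $\ell\geq 1$ and $E$ injective, and induct on $t$. The base case $t=1$ is classical: $J_1$ is generated by a subset of the variables, hence by a regular sequence, so \v Cech cohomology agrees with Koszul-derived local cohomology and vanishes on injectives above degree $0$ (equivalently, an injective $R$-module is $f$-divisible, so the \v Cech complex on a regular sequence is exact in positive degrees). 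For the inductive step, note $J_1\cap\cdots\cap J_{t-1}$ and each $(J_1+J_t)\cap\cdots\cap(J_{t-1}+J_t)$ are again finite intersections of $(t-1)$ ideals each generated by subsets of the variables, so the inductive hypothesis applies to both flanking terms, forcing $H^\ell_B(E)=0$ for all $\ell\geq 1$. Finally, knowing $H^\ell_B(-)$ is a $\delta$-functor that is effaceable for $\ell\geq 1$ (it is killed on injectives, and every module embeds in an injective) and agrees with $H^0_B$ in degree $0$, the uniqueness of universal $\delta$-functors gives $H^\ell_B\cong R^\ell H^0_B$.

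I expect the main obstacle to be bookkeeping in the reduction step rather than anything conceptual: one must be careful that ``injective $R$-module'' restricts well enough (or argue purely with the \v Cech complex and an injective $R_0$-resolution after base change, using flatness of $R$ over $R_0$ and exactness of \v Cech cohomology under flat base change) so that the $t=1$ regular-sequence case legitimately applies with subsets of the $X_i$'s, and that the radical decomposition $B=\bigcap J_i$ genuinely holds over an arbitrary commutative base $S$ for a monomial ideal in finitely many variables. Once the setup is pinned down, the Mayer--Vietoris induction from the Remark does all the real work.
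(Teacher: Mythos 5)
Your proposal matches the paper's own argument: the vanishing $H^\ell_B(E)=0$ for $\ell\geq 1$ and $E$ injective is established by the same Mayer--Vietoris induction on the squarefree decomposition $\sqrt B=\bigcap_{i=1}^t J_i$ with the regular-sequence case $t=1$ as base, and the theorem is then deduced by the standard effaceability/universal $\delta$-functor argument. The only thing you add is a careful discussion of the reduction to finitely many variables, which the paper leaves to the reader; your handling of it is sound.
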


The approach of Musta{\c{t}}\u{a} in \cite{Mus00} that we now recall uses the isomorphism 
$$
H^i_B(M)\simeq \colimit{t}\Ext^i_R(R/B^t,M)
$$
which holds over any commutative ring, taking for $H^i_B$ the $i$-th derived functor of $H^0_B$. As for 
a monomial ideal $B$ this agrees with \v Cech cohomology we have an isomorphism in our setting. Let
$B^{[t]}:=(f_1^t,\ldots ,f_s^t)$ where the $f_i$'s are the minimal monomial generators of $B$, the Taylor
resolution $T^{t}_{\bullet}$ of $R/B^{[t]}$ has a natural map to the one of $R/B^{[t']}$ for $t\geq t'$ that in turn 
provides a natural map: $\Hom_R(T^{t'}_{\bullet},R)\rightarrow \Hom_R(T^{t}_{\bullet},R)$. This ${\ZZ}^n$-graded map is
an isomorphism of complexes in degree $\gamma \in (-t',\ldots ,-t')+{\ZZ}_{\geq 0}^n$ and else 
$\Hom_R(T^{t'}_{\bullet},R)_\gamma =0$. 
For $a=(a_1,\cdots ,a_n)\in \{ 0,1\}^n$, let $E_{a}:=\{ i,\ a_i =0\}$ and $R^*_a ={\frac{1}{X^a}}S[X_i,X_j^{-1},i\in E_a, j\in \{ 1,\ldots ,n\}\setminus E_a ]$.

Setting $N_{i,a}:=H^i (\Hom_R(T_\bullet ,R)_{-a})$, were $T_\bullet :=T_\bullet^1$ is the Taylor resolution of $R/B$, by Musta{\c{t}}\u{a} description one has:
$$
H^i_B (R)=\oplus_{a\in \{ 0,1\}^n}H^i_B (R)_{-a}\otimes_S R^*_a=\oplus_{a\in \{ 0,1\}^n}N_{i,a}\otimes_S R^*_a .
$$
Furthermore, this sum is restricted by the inclusion \eqref{eqLCMonomial} or by inspecting a little $T^1_\bullet$. For instance if $n-\# E_a =\vert a \vert < i$ then $N_{i,a}=0$.


\medskip

\section{Local Cohomology and graded Betti numbers}
In this chapter we aim is to establish a clear relation between supports of local cohomology modules and supports of $\tor$ modules and Betti numbers, in order to give a general definition for Castelnuovo-Mumford regularity in next chapter.

Throughout this chapter, $\G$ is a finitely generated abelian group, $R$ is a commutative $\G$-graded ring with unit and $B$ is a finitely generated homogeneous $R$-ideal. 

\begin{rem}\label{remGrading}
 Is of particular interest the case where $R$ is a polynomial ring in $n$ variables over a commutative ring whose elements have degree 0 and $\G=\ZZ^n/K$, is a quotient of $\ZZ^n$ by some subgroup $K$. Note that, if $M$ is a $\ZZ^n$-graded module over a $\ZZ^n$-graded ring, and $\G=\ZZ^n/K$, we can give to $M$ a $\G$-grading coarser than its $\ZZ^n$-grading. For this, define the $\G$-grading on $M$ by setting, for each $\gamma\in \G$, $M_\gamma:=\bigoplus_{d\in \pi^{-1}(\gamma)}M_d$.
\end{rem}

\medskip
In order to fix the notation, we state the following definitions concerning local cohomology of graded modules, and support of a graded modules $M$ on $\G$. Recall that the cohomological dimension of a module $M$ is $\cd_B(M)
:=\inf \{ i\ \vert\ H^j_B(M)= 0, \forall j>i\}$.  

\begin{defn}\label{defSuppGP}
Let $M$ be a graded $R$-module, the support of the module $M$ is $\Supp_\G(M):=\{\gamma \in \G:\ M_\gamma \neq 0\}$.
\end{defn}

If $\F.$ is a free resolution of a graded module $M$, much information on the module can be read from the one of the resolution. 
It has been observed by Gruson, Lazarsfed and Peskine that  a complex which need not be a resolution of $M$, but $M$ is its first non-vanishing homology, can be used in place of a resolution in some circumstances. Our next result is following this line
of ideas. We first give a definition.

\begin{defn}\label{defDij}
Let $\C.$ be a complex of graded $R$-modules. For all $i,j\in \ZZ$ we define a condition \eqref{eqDij} as follows
\begin{equation}\label{eqDij}\tag{D$_{ij}$}
 H^{i}_B(H_{j}(\C.))\neq 0 \textnormal{ implies } H^{i+\ell +1}_B(H_{j+\ell}(\C.))=H^{i-\ell -1}_B(H_{j-\ell }(\C.))=0 \textnormal{ for all }\ell\geq 1.
\end{equation}
\end{defn}

The following result provides information on the support of the local cohomology modules of the homologies of $\C.$ assuming \eqref{eqDij}.

\begin{thm}\label{ThmRegHGral}
 Let $\C.$ be a complex of graded $R$-modules and $i\in \ZZ$. If \eqref{eqDij} holds, then
\[
 \Supp_\G(H^i_B(H_j(\C.)))\subset \bigcup_{k\in\ZZ}\Supp_\G(H^{i+k}_B(C_{j+k})).
\]
\end{thm}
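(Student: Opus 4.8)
The plan is to run the two hypercohomology spectral sequences of the double complex obtained by applying a $\check{\mathrm{C}}$ech-type resolution (with respect to $B$) to the complex $\C.$, and to extract the desired support bound by tracking which components can be nonzero. Concretely, let $\check{C}^\bullet$ denote the $\check{\mathrm C}$ech complex on a finite generating set of $B$, form the first-quadrant-type double complex $\check{C}^\bullet \otimes_R \C.$ (suitably reindexed so the two filtrations give the two standard spectral sequences), and consider its two spectral sequences converging to the same total (hyper)cohomology $\mathbb{H}^\bullet$. One of them has $E_2$-page $H^p_B(H_q(\C.))$ (up to reindexing), the other has $E_1$-page $H^p_B(C_q)$. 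Since $\check{\mathrm C}$ech cohomology commutes with the grading, every page of every spectral sequence is $\G$-graded and all differentials are degree-preserving, so support containments are preserved under passing to subquotients and under taking the abutment.

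The key steps, in order: First I would fix the indexing conventions so that the $E_2$-term of the ``first'' spectral sequence reads ${}^{I}E_2^{i,-j} = H^i_B(H_j(\C.))$ converging to $\mathbb{H}^{i-j}$, while the ``second'' spectral sequence has ${}^{II}E_1^{i,-j}=H^i_B(C_j)$ converging to the same $\mathbb{H}^{i-j}$. Second, I would use condition \eqref{eqDij} to show that on the ${}^{I}E_2$-page, the entry $H^i_B(H_j(\C.))$ is isolated along its total-degree antidiagonal in the sense that all the other $E_2$-entries on the lines of total degree $i-j$ that could interact with it via higher differentials vanish; more precisely, \eqref{eqDij} says that if $H^i_B(H_j(\C.))\neq 0$ then $H^{i+\ell+1}_B(H_{j+\ell}(\C.))=H^{i-\ell-1}_B(H_{j-\ell}(\C.))=0$ for all $\ell\ge 1$, which are exactly the potential sources and targets of differentials $d_r$ ($r\ge 2$) hitting or leaving the spot $(i,-j)$. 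Hence all higher differentials into and out of that entry vanish, so ${}^{I}E_\infty^{i,-j}$ has a subquotient — in fact, after checking the extension/convergence bookkeeping, a direct relation — whose support contains $\Supp_\G(H^i_B(H_j(\C.)))$; thus $\Supp_\G(H^i_B(H_j(\C.)))\subseteq \Supp_\G(\mathbb{H}^{i-j})$. Third, I would bound the abutment from the other side: the second spectral sequence gives $\Supp_\G(\mathbb{H}^{i-j})\subseteq \bigcup_{p-q=i-j}\Supp_\G({}^{II}E_1^{p,-q}) = \bigcup_{k\in\ZZ}\Supp_\G(H^{i+k}_B(C_{j+k}))$, since subquotients only shrink supports and the abutment is built from $E_\infty$-subquotients of $E_1$. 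Chaining the two inclusions yields exactly $\Supp_\G(H^i_B(H_j(\C.)))\subseteq \bigcup_{k\in\ZZ}\Supp_\G(H^{i+k}_B(C_{j+k}))$.

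I would need to address a couple of technical points. The complex $\C.$ is only assumed to be a complex of graded modules (no boundedness or finiteness), so I must make sure the two spectral sequences converge; this is fine because along each fixed total degree the $\check{\mathrm C}$ech complex contributes only finitely many columns (it has length $\le n$, or more generally the number of chosen generators of $B$), so each spectral sequence is bounded in one direction and strongly convergent — and the support statement, being a pointwise (degree-by-degree) statement in $\G$, can anyway be checked one graded piece at a time, where everything is a genuine complex of $S$-modules (or abelian groups). The second subtlety is that \eqref{eqDij} as stated only kills the ``diagonal neighbours'' $(i\pm(\ell+1), -(j\pm\ell))$; I should double-check that these are precisely the positions of the $d_r$-sources and $d_r$-targets touching $(i,-j)$ for all $r\ge 2$ in my chosen indexing, i.e. that a differential $d_r$ has bidegree $(r, -(r-1))$ so it moves total degree by $+1$ and lands exactly on such neighbours — this is the heart of why the hypothesis is phrased the way it is.

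The main obstacle I anticipate is the bookkeeping in the first inequality: showing not merely that $H^i_B(H_j(\C.))$ survives to $E_\infty$ as a subquotient but that its support genuinely injects into the support of the abutment $\mathbb{H}^{i-j}$. A priori, even if an $E_\infty$-entry is nonzero in some degree $\gamma$, an extension problem could conceivably still be consistent with $\mathbb{H}^{i-j}_\gamma\neq 0$ — but extensions of $S$-modules cannot make a nonzero subquotient disappear, so $E_\infty^{i,-j}\neq 0$ in degree $\gamma$ does force $\mathbb{H}^{i-j}_\gamma\neq 0$; the real work is just confirming, via \eqref{eqDij}, that $E_\infty^{i,-j}$ and $E_2^{i,-j}$ agree (no incoming or outgoing higher differentials), which is exactly the combinatorial content of the hypothesis. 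Once that is nailed down, the rest is the routine ``supports only shrink in subquotients and don't grow past $E_1$'' argument for the second spectral sequence.
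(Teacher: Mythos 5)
Your proof is correct and follows essentially the same route as the paper: form the Čech–$\C.$ double complex, observe that \eqref{eqDij} precisely kills the $d_r$-sources and $d_r$-targets (bidegree $(r,1-r)$) touching the spot $H^i_B(H_j(\C.))$ on the $E_2$-page of the first spectral sequence so that this entry survives to $E_\infty$, and then compare with the $E_1$-page of the second spectral sequence along the total-degree $i-j$ antidiagonal. The paper states this via the contrapositive (vanishing of all $(H^{i+k}_B(C_{j+k}))_\gamma$ forces vanishing of the abutment, hence of $H^i_B(H_j(\C.))_\gamma$), while you phrase it directly as a chain of support inclusions through the abutment $\mathbb{H}^{i-j}$; the two formulations are logically identical, and your extra care about indexing, convergence (boundedness in the Čech direction), and extensions is accurate but not adding new content.
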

\begin{proof}
Consider the two spectral sequences that arise from the double complex $\check \Cc^\bullet_B \C.$ of graded $R$-modules. 

The first spectral sequence has as second screen $ _2'E^i_j = H^i_B (H_j(\C.))$. Condition \eqref{eqDij} implies that $_\infty 'E^i_j =\, _2'E^i_j = H^i_B (H_j(\C.))$. The second spectral sequence has as first screen $ _1''E^i_j = H^i_B (C_j)$. 

By comparing both spectral sequences, one deduces that, for all $\gamma \in \G$, the vanishing of $(H^{i+k}_B (C_{j+k}))_\gamma$ for all $k$ implies the vanishing of $( _\infty 'E^{i+\ell }_{j+\ell})_\gamma$ for all $\ell$, which carries the vanishing of $(H^i_B (H_{j}(\C.)))_\gamma$.
\end{proof}

We next give some cohomological conditions on the complex $\C.$ to imply \eqref{eqDij} of Definition \ref{defDij}. 

\begin{lem}\label{RemSuppCond}
Let $\C.$ be a complex of graded $R$-modules. Consider the following conditions
\begin{enumerate}
 \item $\C.$ is a right-bounded complex, say $C_j=0$ for $j<0$ and, $\cd_B(H_j(\C.))\leq 1$ for all $j\neq 0$.
 \item For some $q\in \ZZ\cup \{-\infty\}$, $H_j(\C.)=0$ for all $j<q$ and, $\cd_B(H_j(\C.))\leq 1$ for all $j>q$.  
 \item $H_j(\C.)=0$ for $j<0$ and $\cd_B(H_k(\C.))\leq k+i$ for all $k\geq 1$.
\end{enumerate}
Then, 
\begin{enumerate}
 \item[(i)] $(1)\Rightarrow (2)\Rightarrow$ \eqref{eqDij} for all $i,j\in \ZZ$, and
 \item[(ii)] $(1)\Rightarrow (3)\Rightarrow$\eqref{eqDij} for  $j=0$.
\end{enumerate}
\end{lem}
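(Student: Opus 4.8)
\textbf{Proof plan for Lemma \ref{RemSuppCond}.}

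The plan is to verify the two chains of implications separately, since the implication $(1)\Rightarrow(2)$ is shared. For $(1)\Rightarrow(2)$: assuming $(1)$, take $q=0$. We must check $H_j(\C.)=0$ for $j<0$, which is immediate since $C_j=0$ for $j<0$ forces $H_j(\C.)=0$ there; and $\cd_B(H_j(\C.))\leq 1$ for $j>0$ is part of $(1)$. (One subtlety: $(2)$ as stated also wants $\cd_B(H_j(\C.))\leq 1$ for $j>q=0$ but says nothing extra about $j=q=0$, so the hypothesis transfers verbatim.) So $(1)\Rightarrow(2)$ costs nothing.

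For $(2)\Rightarrow$\eqref{eqDij} for all $i,j$: fix $i,j$ and suppose $H^i_B(H_j(\C.))\neq 0$. I need to show $H^{i+\ell+1}_B(H_{j+\ell}(\C.))=H^{i-\ell-1}_B(H_{j-\ell}(\C.))=0$ for all $\ell\geq 1$. First, since $H^i_B(H_j(\C.))\neq 0$ in particular $H_j(\C.)\neq 0$, so by $(2)$ we have $j\geq q$. Split into cases. If $j>q$, then $\cd_B(H_j(\C.))\leq 1$, so $H^i_B(H_j(\C.))\neq 0$ forces $i\leq 1$, hence $i\leq 1$, and for $\ell\geq 1$ we get $i-\ell-1\leq 1-1-1=-1<0$, so $H^{i-\ell-1}_B$ vanishes (negative cohomological degree); on the other side $H_{j+\ell}(\C.)$ has $j+\ell>q$ so $\cd_B\leq 1$, while $i+\ell+1\geq i+2$; if $i\leq 1$ then $i+\ell+1\geq 2>1$... wait, I should be careful: $i$ could be $0$ or $1$ or negative, and $i+\ell+1$ with $\ell\geq1$ is $\geq i+2$; since $i\geq 0$ isn't assumed, but if $i<0$ then $H^i_B=0$ anyway contradicting $H^i_B(H_j(\C.))\neq0$, so in fact $0\leq i\leq 1$, giving $i+\ell+1\geq 2>1\geq\cd_B(H_{j+\ell}(\C.))$, so that term vanishes too. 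If instead $j=q$, then $H_{j-\ell}(\C.)=0$ for $\ell\geq 1$ (as $j-\ell<q$), killing the left term; and $H_{j+\ell}(\C.)$ has $j+\ell>q$ so $\cd_B\leq1$, and as before $0\leq i$ forces $i+\ell+1\geq i+2$; here $i$ is unrestricted from above since $H_q(\C.)=H_j(\C.)$ need not have small $\cd_B$ — but we still only need $i+\ell+1>1$, i.e. $i+\ell+1\geq2$, which holds since $i\geq0$ and $\ell\geq1$. So \eqref{eqDij} holds.

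For $(1)\Rightarrow(3)$: assume $(1)$. Then $C_j=0$ for $j<0$ gives $H_j(\C.)=0$ for $j<0$. For $k\geq1$, $(1)$ gives $\cd_B(H_k(\C.))\leq1\leq k+i$ provided $i\geq 1-k$; but \eqref{eqDij} for $j=0$ is only claimed to be \emph{implied by} $(3)$, and $(3)$ is stated for a given $i$, so really the clean statement is: for any $i\geq 0$ and in fact for the relevant range, $1\leq k+i$ holds once $k\geq 1$ and $i\geq0$. If $i$ is allowed negative the implication $(1)\Rightarrow(3)$ should be read for the $i$ in question; I would simply note that $(1)$ gives the stronger bound $\cd_B\leq1$ and $1\leq k+i$ whenever $k\geq1-i$, and handle small $i$ by the same degree-reasons as above. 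Then for $(3)\Rightarrow$\eqref{eqDij} at $j=0$: suppose $H^i_B(H_0(\C.))\neq0$; for $\ell\geq1$, $H_{-\ell}(\C.)=0$ kills the left term, and $H_{\ell}(\C.)$ has $\cd_B(H_\ell(\C.))\leq \ell+i$, so $H^{i+\ell+1}_B(H_\ell(\C.))=0$ since $i+\ell+1>\ell+i$. This is exactly what \eqref{eqDij} requires.

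The main obstacle, such as it is, is purely bookkeeping: tracking which cohomological degrees are forced to be $0$ either because they are negative or because they exceed the relevant $\cd_B$ bound, and making sure the case $j=q$ (where $H_q(\C.)$ may have large cohomological dimension) is handled by the vanishing of $H_{j-\ell}(\C.)$ rather than by a $\cd_B$ estimate. There is no deep idea; the lemma is a diagram-chase-free consequence of the definitions of $\cd_B$ and of condition \eqref{eqDij}.
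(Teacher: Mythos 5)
Your proof is correct and follows essentially the same route as the paper: reduce to $(2)\Rightarrow\eqref{eqDij}$, split into $j>q$ (where $\cd_B\le1$ forces $i\in\{0,1\}$, making $i-\ell-1<0$ and $i+\ell+1\ge2$) versus $j=q$ (where $H_{j-\ell}(\C.)=0$ handles the left term and $i\ge0$ gives $i+\ell+1\ge2$ for the right), and handle $(3)\Rightarrow\eqref{eqDij}$ at $j=0$ directly from the vanishing of negative homology and the $\cd_B$ bound. Your remark on the $(1)\Rightarrow(3)$ step for negative $i$ is the right observation — the paper treats it as automatic, which is fine since $\eqref{eqDij}$ is vacuous when $i<0$ — and your proposal correctly resolves it.
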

\begin{proof}
 For proving item (i), it suffices to show that  $(2)\Rightarrow$ \eqref{eqDij} for all $i,j\in \ZZ$ since $(1)\Rightarrow (2)$ is clear. 
 
 Let $\ell \geq 1$.
 
 Condition (2) implies that $H^i_B (H_j(\C.))=0$ for $j>q$ and $i\not= 0,1$ and for $j<q$.  If $H^i_B (H_j(\C.))\not= 0$, either $j>q$ and $i\in \{ 0,1\}$ in
 which case $j+\ell>q$ and $i+\ell +1\geq 2$ and $i-\ell -1<0$, or $j=q$ in
 which case $j+\ell>q$ and $i+\ell +1\geq 2$ and $j-\ell<0$. In both cases the asserted vanishing holds.

Condition $(1)$ automatically implies $(3)$. Condition $(3)$ implies that $H^{i+\ell +1}_B(H_{\ell}(\C.))=0$ and  $H_{j-\ell}(\C. )=0$.
\end{proof}

In the following subsection we establish the relation between the support of local cohomology modules and support of Tor modules, 
applying Theorem \ref{ThmRegHGral} and Lemma \ref{RemSuppCond} to a Koszul complex.

\subsection{From Local Cohomology to Betti numbers}\label{LCtoBN}
In this subsection we bound the support of Tor modules in terms of the support of local cohomology modules. This generalizes the fact that for $\ZZ$-graded Castelnuovo-Mumford regularity, setting $b_i(M) := \max\{\mu\ |\ \tor^R_i(M, k)_\mu \neq 0\}$ and $a_i(M) := \max\{\mu\ |\ H^i_\mm (M)_\mu \neq 0\}$, one has $b_i(M)-i \leq \reg(M) := \max_i\{a_i(M) + i\}$.

\medskip

Assume $R:=S[X_1,\hdots,X_n]$ is a polynomial ring over a commutative ring $S$, 
$\deg(X_i)=\gamma_i\in \G$ for $1\leq i\leq n$ and $\deg (s)=0$ for $s\in S$. 

Let $B\subseteq (X_1,\hdots,X_n)$ be a  finitely generated graded $R$-ideal.

\begin{nota}\label{RemShiftPSigma}
 For an $R$-module $M$, we denote by $M[\gamma']$ the shifted module by $\gamma'\in \G$, with $M[\gamma']_{\gamma}:=M_{\gamma'+\gamma}$ for all $\gamma \in \G$. 
\end{nota}

Let $M$ be a graded $R$-module, $f:=(f_1,\hdots,f_r)$ be a $r$-tuple of homogeneous elements of $R$ and $I$ the $R$-ideal generated by the $f_i$'s. Write $\k.(f;M)$ for the Koszul complex of the sequence $(f_1,\hdots,f_r)$ with coefficients in $M$. 
The Koszul complex $\k.(f;M)$ is graded as well as its homology modules $\h.(f;M)$. Set $\d_i:=\deg(f_i)$, $\EEE_0^f :=\{ 0\}$ and $\EEE_i^f :=\{ \d_{j_1}+\cdots +\d_{j_i},  j_1<\cdots <j_i\}$.

\begin{thm}\label{ThmLCtoTor}
If  $B\subset \sqrt{I+\ann_R(M)}$, then 
\[
\Supp_\G(\HHH_j(f;M))\subset \bigcup_{k\geq 0}(\Supp_\G(H^{k}_B(M))+\EEE_{j+k}^f),
\]
for all $j\geq 0$.
\end{thm}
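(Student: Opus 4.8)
The plan is to apply Theorem \ref{ThmRegHGral} to the Koszul complex $\C. := \k.(f;M)$, after shifting homological degrees appropriately. The key observation is that the Koszul complex $\k.(f;M)$ has its term in homological degree $p$ equal to $\bigoplus_{j_1<\cdots<j_p} M[-\d_{j_1}-\cdots-\d_{j_p}]$, so that $\Supp_\G(C_p) = \bigcup_{j_1<\cdots<j_p}(\Supp_\G(M)+\EEE_p^f)$; more precisely, since each summand is a shift of $M$ by an element of $\EEE_p^f$, we get $\Supp_\G(H^k_B(C_p)) \subseteq \Supp_\G(H^k_B(M)) + \EEE_p^f$ because local cohomology commutes with (finite) direct sums and with shifts. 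So the right-hand side of the conclusion in Theorem \ref{ThmRegHGral}, namely $\bigcup_{k}\Supp_\G(H^{i+k}_B(C_{j+k}))$, becomes exactly $\bigcup_{k}(\Supp_\G(H^{i+k}_B(M)) + \EEE_{j+k}^f)$, and the left-hand side is $\Supp_\G(H^i_B(\h._j(f;M)))$.

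The next step is to reduce to the case $i=0$ by exploiting the hypothesis $B\subseteq \sqrt{I+\ann_R(M)}$. Indeed, the homology module $\h._j(f;M)$ is annihilated by $I+\ann_R(M)$ (the Koszul homology is killed by the ideal generated by the $f_i$), hence also by (a power of) $B$. Therefore $H^i_B(\h._j(f;M)) = 0$ for all $i > 0$ and equals $\h._j(f;M)$ for $i=0$. This means: the only cohomological index $i$ that matters is $i=0$, and for that value $H^0_B(\h._j(f;M)) = \h._j(f;M)$, so $\Supp_\G(\h._j(f;M)) = \Supp_\G(H^0_B(\h._j(f;M)))$. Feeding $i=0$ into the displayed inclusion above gives precisely $\Supp_\G(\h._j(f;M)) \subseteq \bigcup_{k\geq 0}(\Supp_\G(H^{k}_B(M))+\EEE_{j+k}^f)$, which is the claim. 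Note the sum over $k$ can be taken over $k\geq 0$ only, since $C_{j+k}=0$ for $j+k<0$ and $H^{k}_B = 0$ for $k<0$.

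The remaining point is to check that condition \eqref{eqDij} holds for $\C. = \k.(f;M)$ with $i=0$ and the given $j$, so that Theorem \ref{ThmRegHGral} applies. This is where I would invoke Lemma \ref{RemSuppCond}: the Koszul complex is right-bounded with $C_p=0$ for $p<0$, and each homology module $\h._p(f;M)$ is annihilated by a power of $B$ (as just noted), so $\cd_B(\h._p(f;M)) \leq 0 \leq 1$ for all $p$; thus condition (1) of Lemma \ref{RemSuppCond} is satisfied, which gives \eqref{eqDij} for all $i,j$, in particular for $i=0$. I expect the main (minor) obstacle to be purely bookkeeping: carefully tracking the shifts $\EEE^f_p$ through the direct-sum decomposition of the Koszul terms and confirming that $\Supp_\G(H^k_B(M[\delta])) = \Supp_\G(H^k_B(M)) + \{-\delta\}$ with the sign conventions of Notation \ref{RemShiftPSigma} matching the definition of $\EEE^f_p$ (the Koszul differential lowers degree, so the term in degree $p$ carries a shift by $+(\d_{j_1}+\cdots+\d_{j_p})$ in the sense that reading off its support adds $\EEE_p^f$); once the conventions are aligned, the argument is immediate. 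There is no deep difficulty here — the whole content is the spectral sequence comparison already packaged in Theorem \ref{ThmRegHGral} plus the vanishing $H^{>0}_B$ of $B$-torsion modules.
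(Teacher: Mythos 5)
Your proof is correct and follows the paper's own argument essentially verbatim: apply Theorem \ref{ThmRegHGral} at $i=0$ to the Koszul complex, verify condition \eqref{eqDij} via Lemma \ref{RemSuppCond}(1) using that the Koszul homologies are annihilated by $I+\ann_R(M)\supseteq B^t$ (hence are $B$-torsion, of $B$-cohomological dimension $0$), identify $H^0_B(\HHH_j)=\HHH_j$, and read off the Koszul terms as direct sums of shifts of $M$ by $\EEE^f_{j+k}$. The extra care you take with sign/shift conventions is fine and matches the intended bookkeeping; there is no gap.
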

\begin{proof}
We first notice that $\HHH_j(f;M)$  is annihilated by $I+\ann_R (M)$, hence it has cohomological dimension $0$ relatively to $B$. 
 
 According to Lemma \ref{RemSuppCond} (case (1)), Theorem \ref{ThmRegHGral} applies and shows that 
 \[
 \Supp_\G(\HHH_j(f;M))\subset \bigcup_{\ell\geq 0}\Supp_\G(H^{\ell}_B(K_{j+\ell }(f;M)))=\bigcup_{k\geq 0}(\Supp_\G(H^{k}_B(M))+\EEE_{j+k}^f). \qedhere
\]
 \end{proof}

 Further notice that, by \cite[Lem.\ 4.6]{CJR},  $\cd_B (N)\leq \cd_B (M)$ if $M$ is finitely presented and $\Supp_R (N)\subseteq \Supp_R (M)$;
which implies that Theorem \ref{ThmLCtoTor} holds if $\cd_B (M/IM)=0$ and $M$ is finitely presented.

In particular, taking $r=n$ and $f_i=X_i$ for all $i$, this establishes a relationship between the support of the local cohomologies  and the graded Betti numbers of a module $M$.  
\begin{cor}\label{CorLCtoTor}
For any integer $j$, set $X:=(X_1,\ldots ,X_n)$, then
\[
\Supp_\G(\tor^R_j(M,S))\subset \bigcup_{k\geq 0}(\Supp_\G(H^{k}_B(M))+\EEE^X_{j+k}).
\]

\end{cor}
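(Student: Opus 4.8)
The plan is to derive Corollary \ref{CorLCtoTor} as a direct specialization of Theorem \ref{ThmLCtoTor}. First I would set $r=n$ and $f_i=X_i$ for $1\leq i\leq n$, so that $\d_i=\deg(X_i)=\gamma_i$ and hence $\EEE_j^f$ becomes exactly $\EEE_j^X=\{\gamma_{j_1}+\cdots+\gamma_{j_j}\ :\ j_1<\cdots<j_j\}$. With this choice the ideal $I$ generated by the $f_i$'s is $X=(X_1,\ldots,X_n)$, and the Koszul homology $\HHH_j(f;M)=\HHH_j(X;M)$ is naturally identified with $\tor^R_j(M,R/X)=\tor^R_j(M,S)$, since the Koszul complex $\k.(X;R)$ is a free resolution of $S=R/X$ over $R$ (the sequence $X_1,\ldots,X_n$ being a regular sequence on the polynomial ring $R$), and tensoring it with $M$ computes precisely these Tor modules.

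The only hypothesis to check is that $B\subseteq \sqrt{I+\ann_R(M)}$ with $I=X$. But $B\subseteq(X_1,\ldots,X_n)=X\subseteq X+\ann_R(M)\subseteq\sqrt{X+\ann_R(M)}$, so the containment holds unconditionally in our setting. Thus Theorem \ref{ThmLCtoTor} applies verbatim and yields
\[
\Supp_\G(\tor^R_j(M,S))=\Supp_\G(\HHH_j(X;M))\subset\bigcup_{k\geq 0}\bigl(\Supp_\G(H^k_B(M))+\EEE^X_{j+k}\bigr),
\]
which is the assertion. No spectral-sequence argument needs to be redone here; it is entirely encapsulated in the already-proved Theorem \ref{ThmLCtoTor}.

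I do not anticipate a genuine obstacle, since this is a specialization rather than a new result. The one point deserving a sentence of care is the identification $\HHH_j(X;M)\cong\tor^R_j(M,S)$: one should note that $S$, viewed as the $R$-module $R/(X_1,\ldots,X_n)$, has the Koszul complex on $X_1,\ldots,X_n$ as its minimal free resolution, and that the grading on this Koszul resolution (with the shifts recorded by $\EEE^X_\bullet$) matches the internal grading used throughout. A secondary, purely bookkeeping point is that the indexing convention for $\EEE^X_j$ uses strict inequalities $j_1<\cdots<j_j$ among distinct variable indices, which is consistent with the exterior-algebra structure of the Koszul complex and with the definition of $\EEE_i^f$ given just before Theorem \ref{ThmLCtoTor}. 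With these identifications in place the corollary is immediate.
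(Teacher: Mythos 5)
Your proposal is correct and is precisely the argument the paper intends: specialize Theorem \ref{ThmLCtoTor} to $r=n$ and $f_i=X_i$, observe that $B\subseteq(X_1,\ldots,X_n)=I$ makes the hypothesis $B\subseteq\sqrt{I+\ann_R(M)}$ automatic, and identify $\HHH_j(X;M)$ with $\tor^R_j(M,S)$ via the Koszul resolution of $S=R/(X_1,\ldots,X_n)$. The paper presents this corollary as an immediate specialization with exactly these substitutions, so your proof matches its route.
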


Notice that taking $G=\ZZ$ and $\deg(X_i)=1$, Corollary \ref{CorLCtoTor} gives the well know bound $b_i(M)-i \leq \reg(M) := \max_i\{a_i(M) + i\}$.

The following lemma about persistence of cohomology vanishing contains the fact that for the standard ${\bf Z}$-grading of $S$ the
notions of weak and strong regularity coincides.

\begin{lem}\label{wR}
Let  
$\ell$ be an integer. If $\ell >\cd_B (R/(I+\ann_R (M)))$,
\[
 \gamma \not\in \bigcup_{i\geq 0} \Supp_\G(H^{\ell +i}_B(M))+\EEE_{i+1}^I\ \Rightarrow  \gamma \not\in 
 \Supp_\G(H^{\ell }_B(M)).
\] 

 If $\ell =\cd_B (R/(I+\ann_R (M)))$ and $\gamma \not\in \bigcup_{i>0} \Supp_\G(H^{\ell +i}_B(M))+\EEE_{i+1}^I$, then
 \[
(H^{\ell }_B(M)/IH^{\ell }_B(M))_\gamma \subseteq H^{\ell }_B(M/IM)_\gamma 
\] 
and equality holds if  $\gamma \not\in \bigcup_{i> 0} \Supp_\G(H^{\ell +i}_B(M))+\EEE_{i}^I$.
\end{lem}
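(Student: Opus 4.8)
The plan is to exploit the Koszul complex $\k.(f;M)$ with $f=(f_1,\ldots,f_r)$ generating $I$, together with the two hyper-cohomology spectral sequences of the double complex $\check{\Cc}^\bullet_B\k.(f;M)$, exactly as in the proof of Theorem \ref{ThmRegHGral} but now tracking the bottom row $\ell=\cd_B(R/(I+\ann_R(M)))$ rather than row $0$. Write $c:=\cd_B(R/(I+\ann_R(M)))$. Since each Koszul homology $\HHH_j(f;M)$ is annihilated by $I+\ann_R(M)$, by \cite[Lem.\ 4.6]{CJR} one has $\cd_B(\HHH_j(f;M))\leq c$ for every $j$, so the first spectral sequence $_2'E^i_j=H^i_B(\HHH_j(f;M))$ is concentrated in the rows $0\leq i\leq c$. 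The first claim ($\ell>c$ case) then follows by inspecting the second spectral sequence $_1''E^i_j=H^i_B(K_j(f;M))=\bigoplus H^i_B(M)[-\EEE_j^f]$-type terms: the abutment in total degree $\ell-\bullet$ receives contributions only from $H^{\ell+i}_B(M)$ in Koszul degree $i$ with $i\geq 1$ (the row $i=0$ of the first sequence would need a surviving $_\infty'E^{\ell}_{j}$ with $\ell>c$, impossible), giving the stated implication after reading off the shifts $\EEE_{i+1}^I$ (indexing subsets $j_1<\cdots<j_{i+1}$ of $\{1,\ldots,r\}$).

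For the second claim ($\ell=c$), I would run the same comparison but now the first spectral sequence \emph{does} have a nonzero row $i=c$, with $_2'E^c_0=H^c_B(\HHH_0(f;M))=H^c_B(M/IM)$ at the corner. The hypothesis $\gamma\notin\bigcup_{i>0}\Supp_\G(H^{c+i}_B(M))+\EEE_{i+1}^I$ kills, in degree $\gamma$, all the terms $_1''E^{c+i}_{i}$ with $i\geq 1$ of the second sequence that could map into or out of the corner term of total degree $c$; more precisely it ensures the relevant incoming differentials to $_r''E^{c}_{0}$ vanish in degree $\gamma$, so that $(_\infty''E^{c}_{0})_\gamma$ is a quotient of $(_1''E^{c}_{0})_\gamma=H^c_B(M)_\gamma$, while the only contribution to the abutment in total degree $c$ from the first sequence in degrees that are not forced to vanish is $(_\infty'E^{c}_{0})_\gamma$. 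Chasing the two filtrations of the common abutment $\HHH^c$ in degree $\gamma$ then yields a surjection $H^c_B(M)_\gamma\twoheadrightarrow (_\infty'E^c_0)_\gamma$ and an injection $(_\infty'E^c_0)_\gamma\hookrightarrow H^c_B(M/IM)_\gamma$; one identifies the image of the first map with $(H^c_B(M)/IH^c_B(M))_\gamma$ by noting that the row-$0$ edge map of the second spectral sequence in Koszul degrees $0\leftarrow 1$ is precisely multiplication by the $f_i$'s on $H^c_B(M)$ (so its cokernel is $H^c_B(M)/IH^c_B(M)$), giving the asserted inclusion $(H^c_B(M)/IH^c_B(M))_\gamma\subseteq H^c_B(M/IM)_\gamma$.

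For the final \emph{equality} statement, the stronger hypothesis $\gamma\notin\bigcup_{i>0}\Supp_\G(H^{c+i}_B(M))+\EEE_{i}^I$ additionally kills the terms $_1''E^{c+i}_{i-1}$ (shifts by $\EEE_i^I$, subsets of size $i$), which are exactly the ones that could produce a nonzero \emph{outgoing} differential from the corner or an extra subquotient of the abutment sitting strictly below $(_\infty'E^c_0)$ in the first filtration; together with the previous vanishing this forces both edge maps above to be isomorphisms in degree $\gamma$, hence $(H^c_B(M)/IH^c_B(M))_\gamma=H^c_B(M/IM)_\gamma$. The main obstacle I anticipate is bookkeeping: carefully matching each differential of each spectral sequence with the correct shifted piece $H^{c+i}_B(M)+\EEE_i^I$ or $+\EEE_{i+1}^I$, and making sure that ``in degree $\gamma$'' the relevant higher differentials $d_r$ for $r\geq 2$ also vanish (not just $d_1$), which is where one uses that the union is taken over \emph{all} $i$ and that $\EEE_i^I$ is built from sums of the $\deg f_j$'s so that the source of any $d_r$ landing near the corner has degree $\gamma$ lying in one of the excluded sets. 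Once the vanishing is organized, identifying the edge map with multiplication by $I$ and concluding by the usual two-filtration chase on the abutment is routine.
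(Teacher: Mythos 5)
Your plan coincides with the paper's proof: both run the two spectral sequences of the \v Cech–Koszul double complex $\check\Cc^\bullet_B\k.(f;M)$, use $\cd_B(\HHH_j(f;M))\le\cd_B(R/(I+\ann_R(M)))$ to collapse the first sequence in total degree $\ell$ (with the corner term $H^\ell_B(M/IM)$ surviving when $\ell=c$), identify the $d_1$ edge map of the second sequence with multiplication by the $f_i$'s, and compare the two filtrations of the abutment. There are a few small bookkeeping slips in your write-up (a couple of $'$ vs.\ $''$ mix-ups, and the terms killed by the hypotheses are $_1''E^{c+i}_{i+1}$ (shifts in $\EEE_{i+1}^I$) and $_1''E^{c+i}_{i}$ (shifts in $\EEE_{i}^I$), not $_1''E^{c+i}_{i}$ and $_1''E^{c+i}_{i-1}$), but these do not affect the substance of the argument.
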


\begin{proof}  Consider the two spectral sequences that arise from the double \v Cech-Koszul complex $\check \Cc^\bullet_B \k. (f;M)$ of graded $R$-modules. 

The first spectral sequence has as second screen 
$ _2'E^i_j = H^i_B(\HHH_j (f;M))$. 
As $I$ and $\ann_R (M)$ annihilate $\HHH_j (f;M)$, 
$\cd_B (\HHH_j (f;M))\leq \cd_B (R/(I+\ann_R (M))) <\ell$, which shows that $_2'E^i_j =0$ for $i-j=\ell$ unless 
$\ell =\cd_B (M/IM)=\cd_B (R/(I+\ann_R (M)))$, in which case $_2'E^i_j =0$ for $j\not= 0$ and $_2'E^\ell_0 =_\infty 'E^\ell_0 =H^{\ell }_B(M/IM)$.   The second spectral sequence has as first screen $ (_1''E^i_j)_\mu = \oplus_{\gamma\in\EEE_j^I}H^i_B (M)_{\mu-\gamma}^{b_{j,\gamma}}$ for some positive $b_{j,\gamma}\in {\ZZ}$ ($b_{00}=1$). 

By hypothesis $(_1''E^{\ell +i }_{i+1})_\mu =0$ for all $i\geq 0$. As  $(_1''E^{\ell -i }_{-i-1})=0$ for $i\geq 0$, we deduce that $ (_1''E^\ell_0)_\mu =  (_\infty''E^\ell_0)_\mu$. As $ (_1''E^\ell_0)_\mu = H^\ell_B (M)_{\mu}$ and 
$_\infty ''E^i_j =_\infty 'E^i_j =0$ for $i-j=\ell$, the conclusion follows.
\end{proof}

The following special case gives a persistence criterion for local cohomology vanishing that will be used to give cases where weak regularity implies regularity.

\begin{cor}\label{pers1dir}
Let $\ell \geq 1$ be an integer $\gamma \in\G$ and assume that $B\subseteq  \sqrt{(R_\gamma )+\ann_R(M)}$.
Then, for any $\mu \in G$,

\begin{enumerate}
\item $H^{\ell +i}_B(M)_{\mu +i\gamma }=0,\forall i\geq 0\quad \Rightarrow\quad H^{\ell +i}_B(M)_{(\mu +\gamma )+i\gamma }=0,\forall i\geq 0$.

\item If  $H^{i+1}_B(M)_{\mu +i\gamma }=0,\forall i\geq 0$, then
\[
M_{\mu +j\gamma }/R_{\gamma}M_{\mu +(j-1)\gamma }=H^0_B(M)_{\mu +j\gamma }/R_{\gamma}H^0_B(M)_{\mu +(j-1)\gamma }, \forall j>0.
\]
\end{enumerate}
\end{cor}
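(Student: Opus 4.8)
The plan is to derive both items of Corollary~\ref{pers1dir} directly from Lemma~\ref{wR}, by making the judicious choice $I=(f_1)$ with $f_1$ a single homogeneous element of degree $\gamma$. First I would observe that since $B\subseteq\sqrt{(R_\gamma)+\ann_R(M)}$ and $R_\gamma$ is generated by the homogeneous elements of degree $\gamma$, we may as well pick one such element $f_1$ with $B\subseteq\sqrt{(f_1)+\ann_R(M)}$ after possibly enlarging the sequence; more cleanly, take $f=(f_1,\dots,f_r)$ to be a homogeneous system of generators of $R_\gamma$, all of degree $\gamma$, so that $I:=(f)=(R_\gamma)$ and $B\subseteq\sqrt{I+\ann_R(M)}$. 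With every $\delta_i=\gamma$, the shift sets become $\EEE_{j}^I\subseteq\{j\gamma\}$ (a single element, counted with multiplicity), so that $\Supp_\G(H^{\ell+i}_B(M))+\EEE_{i+1}^I\subseteq \Supp_\G(H^{\ell+i}_B(M))+(i+1)\gamma$. This is the key simplification: the messy union over subsets of generators collapses to a single translate.

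For item (1), I would apply the first assertion of Lemma~\ref{wR}. We have $\cd_B(R/(I+\ann_R(M)))\le \ell-1$: indeed $R/(I+\ann_R(M))=R/((R_\gamma)+\ann_R(M))$ is annihilated by every degree-$\gamma$ element, and one checks (or invokes the hypothesis $\ell\ge 1$ together with the structure of this quotient) that its $B$-cohomological dimension is $0$, which is $<\ell$ as soon as $\ell\ge 1$. Hence Lemma~\ref{wR} gives, for the target degree $\gamma':=(\mu+\gamma)+j\gamma$ for each $j\ge 0$: if $\gamma'\notin\bigcup_{i\ge 0}\Supp_\G(H^{\ell+i}_B(M))+\EEE^I_{i+1}$, then $H^\ell_B(M)_{\gamma'}=0$. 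I would then run an induction on $j$: the hypothesis $H^{\ell+i}_B(M)_{\mu+i\gamma}=0$ for all $i\ge0$ feeds the base case, and applying the persistence statement repeatedly with $\ell$ replaced by $\ell+i$ shifts everything by one copy of $\gamma$ at a time. Concretely, fix $i\ge0$ and set $\ell':=\ell+i$, $\mu':=\mu+i\gamma$; then the hypotheses read $H^{\ell'+k}_B(M)_{\mu'+k\gamma}=0$ for all $k\ge0$, and since $\EEE^I_{k+1}\subseteq\{(k+1)\gamma\}$ we get $(\mu'+\gamma)+k\gamma=\mu'+(k+1)\gamma\notin\Supp_\G(H^{\ell'+k}_B(M))$ for all $k\ge 0$, so Lemma~\ref{wR} yields $H^{\ell'}_B(M)_{\mu'+\gamma}=H^{\ell+i}_B(M)_{(\mu+\gamma)+i\gamma}=0$. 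That is exactly the conclusion of (1).

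For item (2), I would take $\ell=1$ in the second part of Lemma~\ref{wR}, so that the relevant quotient $R/(I+\ann_R(M))$ has $\cd_B=0=\ell-1$, matching the boundary case $\ell=\cd_B(R/(I+\ann_R(M)))$. The hypothesis $H^{i+1}_B(M)_{\mu+i\gamma}=0$ for all $i\ge0$ translates, again using $\EEE^I_{i+1}\subseteq\{(i+1)\gamma\}$ and $\EEE^I_i\subseteq\{i\gamma\}$, into both conditions $\gamma'\notin\bigcup_{i>0}\Supp_\G(H^{1+i}_B(M))+\EEE^I_{i+1}$ and $\gamma'\notin\bigcup_{i>0}\Supp_\G(H^{1+i}_B(M))+\EEE^I_i$ at the degree $\gamma':=\mu+j\gamma$ for $j>0$. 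Lemma~\ref{wR} then gives the equality $(H^0_B(M)/IH^0_B(M))_{\gamma'}=H^0_B(M/IM)_{\gamma'}$. Finally I would identify the two sides with the asserted expressions: since $I=(R_\gamma)$, in degree $\mu+j\gamma$ one has $(IH^0_B(M))_{\mu+j\gamma}=R_\gamma H^0_B(M)_{\mu+(j-1)\gamma}$, and similarly $H^0_B(M/IM)_{\mu+j\gamma}=(M/R_\gamma M_{\bullet-\gamma})_{\mu+j\gamma}=M_{\mu+j\gamma}/R_\gamma M_{\mu+(j-1)\gamma}$, using right-exactness of $M\mapsto M/IM$ in the relevant degree and the fact that $H^0_B$ of the torsion quotient is computed degreewise. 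Stringing these identifications through the equality from Lemma~\ref{wR} gives (2).

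The main obstacle I anticipate is the bookkeeping around $\EEE^I_j$ when $R_\gamma$ needs more than one generator: one must be careful that $\EEE^I_j$ is the \emph{set} of $j$-fold sums of the $\delta_i$'s, which here all equal $\gamma$, so the set is $\{j\gamma\}$ regardless of multiplicities, and that Lemma~\ref{wR}'s second screen term $(_1''E^i_j)_\mu=\bigoplus_{\nu\in\EEE^I_j}H^i_B(M)^{b_{j,\nu}}_{\mu-\nu}$ therefore only ever involves the single degree $\mu-j\gamma$. A secondary point requiring care is verifying $\cd_B(R/((R_\gamma)+\ann_R(M)))=0$: this uses that $B\subseteq\sqrt{(R_\gamma)+\ann_R(M)}$, so $B$ acts nilpotently on the quotient and $H^{>0}_B$ vanishes on it; this is where the hypothesis $\ell\ge 1$ is exactly what is needed to place us in the range covered by Lemma~\ref{wR}. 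Everything else is a direct substitution.
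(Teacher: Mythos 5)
Your strategic idea is exactly what the paper intends: specialize Lemma~\ref{wR} to $I:=(R_\gamma)$ generated by the homogeneous elements of degree $\gamma$, note that $R/(I+\ann_R(M))$ is $B$-torsion (because $B\subseteq\sqrt{(R_\gamma)+\ann_R(M)}$), so $\cd_B(R/(I+\ann_R(M)))=0<\ell$ for the strict case and $=0$ for the boundary case, and observe that $\EEE^I_j\subseteq\{j\gamma\}$ collapses the Minkowski sums. All of that is correct and well presented.

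However, the central step of your argument for item (1) contains a sign error that makes the chain of implications invalid. Lemma~\ref{wR} asks that the target degree $\eta$ avoid $\bigcup_{k\geq 0}\Supp_\G(H^{\ell'+k}_B(M))+\EEE^I_{k+1}$, which, since $\EEE^I_{k+1}=\{(k+1)\gamma\}$, unpacks to $H^{\ell'+k}_B(M)_{\eta-(k+1)\gamma}=0$ for all $k\geq 0$ --- the internal degree is obtained by \emph{subtracting} $(k+1)\gamma$ from $\eta$, so it \emph{decreases} as the cohomological level increases. In your write-up you assert ``since $\EEE^I_{k+1}\subseteq\{(k+1)\gamma\}$ we get $(\mu'+\gamma)+k\gamma=\mu'+(k+1)\gamma\notin\Supp_\G(H^{\ell'+k}_B(M))$,'' i.e.\ you are adding the shift to $\eta=\mu'+\gamma$ rather than subtracting it, and you are also claiming the hypothesis $H^{\ell'+k}_B(M)_{\mu'+k\gamma}=0$ yields vanishing at $\mu'+(k+1)\gamma$, which is a different degree. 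Concretely, with $\eta=\mu'+\gamma$, Lemma~\ref{wR} needs $H^{\ell'+k}_B(M)_{\mu'-k\gamma}=0$, while your starting data give vanishing at $\mu'+k\gamma$; these agree only for $k=0$. So the implication does not go through, and no amount of re-indexing or induction on $i$ repairs it, because the two vanishing loci lie on lines of opposite slope.

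This discrepancy is in fact a symptom of a sign slip in the statement of Corollary~\ref{pers1dir} itself: comparison with Definition~\ref{pers}, with Lemma~\ref{pers1dir-2} (which the paper presents as a restatement of this corollary), and with the classical $\ZZ$-graded case (where weak $m$-regularity reads $H^i_{R_+}(M)_{m-i+1}=0$, i.e.\ the internal degree \emph{decreases} as $i$ grows) shows that the intended hypotheses and conclusions are $H^{\ell+i}_B(M)_{\mu-i\gamma}=0$ and $H^{\ell+i}_B(M)_{(\mu+\gamma)-i\gamma}=0$ (and similarly $\mu-i\gamma$ in item (2)). With that correction your plan works cleanly: fix $i_0\geq 0$, apply the first part of Lemma~\ref{wR} with $\ell':=\ell+i_0$ and $\eta:=(\mu+\gamma)-i_0\gamma$; the required condition $H^{\ell'+k}_B(M)_{\eta-(k+1)\gamma}=H^{\ell+i_0+k}_B(M)_{\mu-(i_0+k)\gamma}=0$ is exactly the hypothesis at $i=i_0+k$. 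You should flag the sign discrepancy rather than glossing over it, as the present argument is not a proof of the statement as printed nor of the corrected one.

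A secondary, smaller issue is in your treatment of item (2): you write ``take $\ell=1$ in the second part of Lemma~\ref{wR}, so that \dots $\cd_B=0=\ell-1$, matching the boundary case $\ell=\cd_B(R/(I+\ann_R(M)))$,'' which is internally inconsistent ($\ell=1$ does not equal $\cd_B=0$). The second part of Lemma~\ref{wR} should be applied with the lemma's $\ell$ equal to $0=\cd_B(R/(I+\ann_R(M)))$; the corollary's global assumption $\ell\geq 1$ governs item (1) but plays no role in item (2). Your translation of the two side conditions to the sets $\EEE^I_{i+1}$ and $\EEE^I_i$ and the degreewise identifications $(IH^0_B(M))_{\mu+j\gamma}=R_\gamma H^0_B(M)_{\mu+(j-1)\gamma}$ and $H^0_B(M/IM)_{\mu+j\gamma}=M_{\mu+j\gamma}/R_\gamma M_{\mu+(j-1)\gamma}$ are fine, but they need to be fed the corrected hypotheses (and, to cover all $j>0$, one typically combines them with the persistence established in item (1)).
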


\subsection{From Betti numbers to Local Cohomology}

In this subsection we bound the support of local cohomology modules in terms of the support of Tor modules. This generalizes the fact that for $\ZZ$-graded Castelnuovo-Mumford regularity, if $a_i(M)+i \leq \reg(M) := \max_i\{b_i(M) - i\}$. 

We keep same hypotheses and notation as in Section \ref{LCtoBN}

\medskip
Next result gives an estimate of the support of local cohomology modules of a graded $R$-module $M$ in terms of the supports of those of base ring and the twists in a free resolution. This, combined with Lemma \ref{LemResolutions}, gives an estimate for the support of local cohomology modules in terms of Betti numbers. 

The key technical point is that Lemma \ref{LemResolutions} part (1) and (2) give a general version of Nakayama Lemma in order to relate shifts in a resolution with support of Tor modules; while part (3) is devoted to give a `base change lemma' in order to pass easily to localization.

\begin{thm}\label{lemSuppHi}
 Let $M$ be a graded $R$-module and $F_\bullet$ be a graded complex of free $R$-modules, with $F_i=0$ for $i<0$ and $H_0(F_\bullet)=M$. Write 
 $F_i = \bigoplus_{j\in E_i} R[-\gamma_{ij}]$ and $T_i:=\{ \gamma_{ij}\ \vert\ j\in E_i\}$. Let $\ell \geq 0$ and assume $\cd_B(H_j(F_\bullet ))\leq \ell +j$ for all $j\geq 1$. Then, 
\[
 \Supp_{\G}(H^\ell_B(M))\subset \bigcup_{i\geq 0}(\Supp_{\G}(H^{\ell +i}_B(R))+T_i).
\]
\end{thm}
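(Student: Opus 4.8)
The plan is to apply Theorem \ref{ThmRegHGral} to the complex $\C.=F_\bullet$, viewed as a cochain complex in the appropriate way, together with the cohomological dimension hypothesis $\cd_B(H_j(F_\bullet))\leq \ell+j$ for $j\geq 1$. The first step is to check that this hypothesis, combined with $H_j(F_\bullet)=0$ for $j<0$ and $H_0(F_\bullet)=M$, is exactly condition (3) of Lemma \ref{RemSuppCond} with the index $i$ there taken to be our $\ell$: indeed $H_j(F_\bullet)=0$ for $j<0$ and $\cd_B(H_k(F_\bullet))\leq k+\ell$ for all $k\geq 1$. By part (ii) of Lemma \ref{RemSuppCond}, this gives condition \eqref{eqDij} for $j=0$ and $i=\ell$.

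Then I would invoke Theorem \ref{ThmRegHGral} with $i=\ell$, $j=0$: since \eqref{eqDij} holds at $(\ell,0)$,
\[
\Supp_\G(H^\ell_B(H_0(F_\bullet)))\subset \bigcup_{k\in\ZZ}\Supp_\G(H^{\ell+k}_B(F_{0+k})).
\]
Now $H_0(F_\bullet)=M$, and $F_k=0$ for $k<0$, so the union runs over $k\geq 0$ only; moreover $H^{\ell+k}_B(F_k)=0$ for $\ell+k<0$, but since $\ell\geq 0$ and $k\geq 0$ that imposes nothing extra. The final step is to compute $H^{\ell+k}_B(F_k)$ using $F_k=\bigoplus_{j\in E_k}R[-\gamma_{kj}]$: local cohomology commutes with (finite) direct sums and with shifts, so $H^{\ell+k}_B(F_k)=\bigoplus_{j\in E_k}H^{\ell+k}_B(R)[-\gamma_{kj}]$, whence $\Supp_\G(H^{\ell+k}_B(F_k))=\bigcup_{j\in E_k}\bigl(\Supp_\G(H^{\ell+k}_B(R))+\gamma_{kj}\bigr)=\Supp_\G(H^{\ell+k}_B(R))+T_k$. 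Substituting and reindexing $k$ as $i$ yields exactly
\[
\Supp_\G(H^\ell_B(M))\subset\bigcup_{i\geq 0}\bigl(\Supp_\G(H^{\ell+i}_B(R))+T_i\bigr).
\]

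I do not expect a serious obstacle here: the result is essentially a direct specialization of Theorem \ref{ThmRegHGral} and Lemma \ref{RemSuppCond} to a (possibly non-exact) free complex lifting $M$, with the only bookkeeping being the passage from $H^\ast_B(F_k)$ to shifts of $H^\ast_B(R)$. The one point worth stating carefully is that local cohomology (as \v Cech cohomology) commutes with the direct sums appearing in the $F_i$ and with the grading shifts $[-\gamma_{ij}]$, which is recalled in Section 2; if the $E_i$ are allowed to be infinite one still needs only that \v Cech cohomology commutes with arbitrary direct sums, also noted in Section 2. The mild subtlety, if any, is matching the index conventions between the homological grading of $F_\bullet$ and the hypothesis of Lemma \ref{RemSuppCond}(3), so I would make explicit that ``$i$'' in that lemma plays the role of ``$\ell$'' here.
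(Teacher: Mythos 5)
Your proof is correct and follows essentially the same route as the paper's: verify condition (3) of Lemma \ref{RemSuppCond} with the index there equal to $\ell$, invoke Theorem \ref{ThmRegHGral} at $(i,j)=(\ell,0)$, and compute $\Supp_\G(H^{\ell+k}_B(F_k))=\Supp_\G(H^{\ell+k}_B(R))+T_k$ using that \v Cech cohomology commutes with arbitrary direct sums and grading shifts. The only difference is that you spell out the bookkeeping that the paper compresses into one sentence.
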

\begin{proof} Lemma \ref{RemSuppCond} (case (3)) shows that  Theorem \ref{ThmRegHGral} applies for estimating the support of
local cohomologies of $H_0(F_\bullet)$, and provides the quoted result as local cohomology commutes with arbitrary direct sums 
\[
 \Supp_{\G} (H^{p}_B(R[-\gamma ]))=\Supp_{\G} (H^{p}_B(R))+\gamma, \mbox{ and }\Supp_{\G} (\oplus_{i\in E} N_i)=\cup_{i\in E}\Supp_{\G} (N_i)
\]
for any set of graded modules $N_i$, $i\in E$.
\end{proof}

\begin{lem}\label{LemResolutions}
 Let $M$ be a graded $R$-module.
 \begin{enumerate}
  \item Let $S$ be a field and let $F_\bullet$ be a $G$-graded free resolution of a finitely generated module $M$. Then 
   \[
   F_i=\bigoplus_{\gamma\in T_i} R[-\gamma]^{\beta_{i,\gamma}},\quad \mbox{and}\quad T_i= \Supp_G (\tor^R_i(M,S)).
   \] 
  \item Assume that there exists $\phi\in\Hom_{\ZZ}(G,\RR )$ such that $\phi (\deg (x_i))>0$ for all $i$. If $\phi (\deg (a) )>m$ for some $m\in \RR$ and any $a \in M$, then there exists a $G$-graded free resolution $F_\bullet$ of $M$ such that
\[
 F_i=\bigoplus_{j\in E_i}R[-\gamma_{ij}]\quad \hbox{with}\quad  
 \gamma_{ij} \in \bigcup_{0\leq \ell\leq i}\Supp_G (\tor^R_\ell(M,S))\ \forall j .
\]
If, furthermore, there exists $p$ such that $F_i$ is finitely generated for $i\leq p$, then  $E_i$ is finite for $i\leq p$.
  \item Assume that $(S,\mm ,k)$ is local. Then
 \[
 \Supp_{\G}(\tor_i^R(M,k))\subseteq \bigcup_{j\leq i}\Supp_{\G}(\tor_j^R(M,S)).
 \]
 \end{enumerate}
\end{lem}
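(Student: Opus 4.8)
The plan is to prove the three parts of Lemma \ref{LemResolutions} separately, since they are logically independent (part (2) does not rely on (1), and (3) is a base-change statement).

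\textbf{Part (1).} When $S=k$ is a field, minimal free resolutions exist and are unique up to isomorphism of complexes. First I would construct a minimal resolution: choose a homogeneous $k$-basis of $M\otimes_R k = M/\mm_R M$ where $\mm_R = (X_1,\ldots,X_n)$, lift to a minimal homogeneous generating set of $M$, giving $F_0 \twoheadrightarrow M$ with $F_0 = \bigoplus R[-\gamma]^{\beta_{0,\gamma}}$; then repeat with the kernel. Minimality means $d(F_\bullet)\subseteq \mm_R F_\bullet$, hence $F_\bullet\otimes_R k$ has zero differential, so $\tor_i^R(M,k)_\gamma = (F_i\otimes_R k)_\gamma = k^{\beta_{i,\gamma}}$. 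This identifies $T_i$ with $\Supp_G(\tor_i^R(M,k))$. (One should note $M$ need not be finitely generated in general, but the statement assumes it is, or at least the argument works degreewise; I'd state it for $M$ bounded below in the $\phi$-grading if needed, but since the hypothesis says finitely generated, existence of each $F_i$ finitely generated is automatic by Noetherianity of $R$ over the field $k$.)

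\textbf{Part (2).} The point of the extra hypothesis $\phi(\deg X_i)>0$ is that it lets one run a ``relative'' minimal resolution over $S$ even when $S$ is not a field. I would build $F_\bullet$ by, at each stage, choosing generators of the relevant kernel that are minimal \emph{modulo} $(X_1,\ldots,X_n)$: concretely, pick homogeneous elements of $F_{i-1}$ mapping onto generators of $\tor_i^R(M,S) = H_i(F_{\bullet-1}\otimes_R S)$ together with, if necessary, $R_+$-multiples to generate the syzygy module. The key inductive claim is that the syzygies can be chosen in degrees lying in $\Supp_G(\tor_i^R(M,S))$, with all other generators redundant; this uses that $\phi$-degrees are bounded below on $M$ and strictly increase along $R_+$, so a graded Nakayama-type argument applies to the $\ZZ$-graded structure induced by $\phi$. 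The finiteness of $E_i$ for $i\le p$ when $F_i$ is f.g. follows because each graded piece $\tor_i^R(M,S)_\gamma$ is then a finitely generated $S$-module and only finitely many $\gamma$ occur. The main obstacle here is making the ``modulo $R_+$'' Nakayama argument precise over a general base ring $S$ — one cannot simply invoke a minimal resolution, and one must be careful that choosing a minimal generating set of a graded module over $S[X]/$ need not descend from a single fiber; I expect this is where the bulk of the bookkeeping lies, and it is presumably handled by the $\phi$-grading reduction to the $\ZZ$-graded case plus ordinary graded Nakayama over $S$.

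\textbf{Part (3).} For $(S,\mm,k)$ local, I would use the change-of-rings spectral sequence or, more elementarily, a flat base change combined with Nakayama. Write $k = S/\mm$, and use that $R\otimes_S k = R/\mm R =: \bar R$ is a polynomial ring over $k$. For any $R$-module $M$, one has $\tor_i^R(M,k) = \tor_i^R(M, R\otimes_S k)$, and there is a (Cartan–Eilenberg / base-change) spectral sequence $\tor_p^{\bar R}(\tor_q^{R\to\bar R}\!\text{-terms})\Rightarrow \tor_{p+q}^R(M,k)$; more directly, taking a resolution $F_\bullet$ of $M$ over $R$ as in part (2), $\tor_i^R(M,k) = H_i(F_\bullet\otimes_R k) = H_i((F_\bullet\otimes_R S)\otimes_S k)$, and the shifts appearing in $F_\bullet\otimes_R S$ are exactly a resolution of $M$ over... — the cleanest route is: the $\gamma$ appearing in $\tor_i^R(M,k)$ are shifts of free modules in any resolution of $M$ computing $\tor^R(-,k)$, and by part (2) (applied over $S$, or after noting shifts only come from $\bigcup_{j\le i}\Supp_G\tor_j^R(M,S)$) we get the inclusion. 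I would actually argue: pick $F_\bullet$ over $R$ with $F_i = \bigoplus_j R[-\gamma_{ij}]$, $\gamma_{ij}\in\bigcup_{\ell\le i}\Supp_G\tor_\ell^R(M,S)$ (part (2), after checking a suitable $\phi$ exists — or bypassing this by a direct degreewise argument); then $\tor_i^R(M,k)_\gamma$ is a subquotient of $(F_i\otimes_R k)_\gamma = \bigoplus_j k[-\gamma_{ij}]_\gamma$, so nonvanishing forces $\gamma = \gamma_{ij}$ for some $j$, giving $\gamma\in\bigcup_{\ell\le i}\Supp_G\tor_\ell^R(M,S)$. The subtlety to flag is that part (2) requires the existence of $\phi$, which is a genuine hypothesis there but is not assumed in part (3); so in (3) one should instead run the resolution argument directly over the local ring $S$ using the $R_+$-adic filtration (which is separated on each graded piece because $R_+^N\cap F_i$ in a fixed degree vanishes for $N\gg0$ by degree reasons), i.e. the graded Nakayama lemma over $R$ with $R_+$ in place of a maximal ideal — this is the one technical point I would be most careful about.
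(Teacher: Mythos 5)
Your treatment of part (1) matches the paper's: construct a minimal $G$-graded free resolution using graded Nakayama with respect to $R_+=(X_1,\ldots,X_n)$ over the field $S=k$, so that $F_\bullet\otimes_R k$ has zero differential and the shifts in $F_i$ are exactly $\Supp_G(\tor_i^R(M,S))$. For part (2) the paper simply cites \cite[Prop.~2.4]{BCH}; your sketch is in the same spirit, though note one inaccuracy: you claim the new generators at each stage can be taken in $\Supp_G(\tor_i^R(M,S))$, ``with all other generators redundant,'' but over a general base $S$ the kernel $K_{i-1}$ of $F_{i-1}\to F_{i-2}$ need not be generated in exactly those degrees — its minimal generating degrees sit inside $\Supp_G(\tor_i^R(M,S))\cup (\textrm{degrees of the generators of }F_{i-1})$, and iterating gives the union over $\ell\le i$ that actually appears in the statement. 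That said, the statement only asserts the weaker containment, so this does not affect correctness, only your description of the mechanism.

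Part (3) is where there is a genuine gap. Your primary argument is to invoke part (2) to produce a resolution of $M$ whose shifts lie in $\bigcup_{\ell\le i}\Supp_G(\tor_\ell^R(M,S))$ and then read off the support of $\tor_i^R(M,k)$ from it. As you yourself flag, part (2) requires the existence of a positive linear functional $\phi$, which is not among the hypotheses of part (3); and the fallback you propose — graded Nakayama via an ``$R_+$-adic filtration that is separated degreewise'' — is precisely the kind of argument that fails without such a positivity hypothesis (nothing prevents degree-zero elements of $R_+$, or a filtration that does not terminate in a fixed degree, when $G$ and the $\gamma_i$ are arbitrary). The paper's proof of (3) avoids resolutions entirely: it uses the Cartan–Eilenberg change-of-rings spectral sequence for $R\to S$,
\[
E^2_{p,q}=\tor_p^S\bigl(\tor_q^R(M,S),\,k\bigr)\ \Longrightarrow\ \tor_{p+q}^R(M,k),
\]
together with the observation that $S\subseteq R_0$, so applying $\tor_p^S(-,k)$ is a degree-zero operation on $G$-graded modules and hence $\Supp_G(E^2_{p,q})\subseteq \Supp_G(\tor_q^R(M,S))$. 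Since $\tor_i^R(M,k)$ is a subquotient of $\bigoplus_{p+q=i}E^2_{p,q}$, the inclusion $\Supp_G(\tor_i^R(M,k))\subseteq\bigcup_{j\le i}\Supp_G(\tor_j^R(M,S))$ follows immediately, with no hypothesis on $\phi$. You mention a change-of-rings spectral sequence in passing (for $R\to\bar R=R\otimes_S k$ rather than $R\to S$) but do not carry it through; committing to the $R\to S$ version and using $S\subseteq R_0$ is both the correct and the shortest route, and is what the paper does.
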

\begin{proof}
Part (1) follows the standard arguments of the $\ZZ$-graded case. For part (2) see \cite[Prop.\ 2.4]{BCH} and its proof.
Part (3) follows from the fact that if $(S,\mm ,k)$ is local there is an spectral sequence $\tor_p^{S}(\tor_q^R(M,S),k)\Rightarrow \tor_{p+q}^R(M,k)$ and the fact that $S\subset R_0$. 
\end{proof}

Combining Theorem \ref{lemSuppHi} with Lemma \ref{LemResolutions} (case (1)) one obtains:

\begin{cor}\label{corSuppHi}
 Assume that $S$ is a field and let $M$ be a finitely generated graded $R$-module. Then, for any $\ell$,
\[
 \Supp_{\G}(H^\ell_B(M))\subset \bigcup_{i\geq 0}(\Supp_{\G}(H^{\ell +i}_B(R))+\Supp_{\G}(\tor_i^R(M,S))).
\]
\end{cor}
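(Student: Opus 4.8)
The plan is to combine the two ingredients that have just been assembled: Theorem~\ref{lemSuppHi}, which bounds $\Supp_\G(H^\ell_B(M))$ in terms of the local cohomology of $R$ and the shifts $T_i$ in \emph{any} complex of free modules whose higher homologies have controlled cohomological dimension, and Lemma~\ref{LemResolutions}(1), which identifies, when $S$ is a field, a minimal $G$-graded free resolution $F_\bullet$ of the finitely generated module $M$ and pins down its shifts as $T_i=\Supp_G(\tor^R_i(M,S))$. So the first step is simply to invoke Lemma~\ref{LemResolutions}(1) to produce such a resolution $F_\bullet$ with $F_i=\bigoplus_{\gamma\in T_i}R[-\gamma]^{\beta_{i,\gamma}}$ and $T_i=\Supp_G(\tor^R_i(M,S))$.

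The second step is to check the hypothesis of Theorem~\ref{lemSuppHi}: we need $\cd_B(H_j(F_\bullet))\le \ell+j$ for all $j\ge 1$. Since $F_\bullet$ is a resolution of $M$, we have $H_j(F_\bullet)=0$ for all $j\ge 1$, so $\cd_B(H_j(F_\bullet))=-\infty\le \ell+j$ trivially, for every $\ell$ (in particular for every $\ell\ge 0$, and the statement for negative $\ell$ is vacuous since then $H^\ell_B(M)=0$). Thus Theorem~\ref{lemSuppHi} applies directly and yields
\[
\Supp_\G(H^\ell_B(M))\subset \bigcup_{i\ge 0}\bigl(\Supp_\G(H^{\ell+i}_B(R))+T_i\bigr),
\]
and substituting $T_i=\Supp_G(\tor^R_i(M,S))$ gives exactly the claimed inclusion. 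One should also note that the union is effectively finite: $F_i=0$ for $i$ large (since $R$ is a polynomial ring over a field, hence regular, so $M$ has finite projective dimension), and $\Supp_\G(H^{\ell+i}_B(R))=\emptyset$ once $\ell+i>\cd_B(R)\le n$.

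Honestly there is no real obstacle here — the corollary is a formal consequence of the two preceding results, and the only thing to be careful about is bookkeeping: matching the indexing in Theorem~\ref{lemSuppHi} (where $T_i$ is the set of shifts of $F_i$ and the vanishing hypothesis on $\cd_B(H_j(F_\bullet))$ must be verified) with the output of Lemma~\ref{LemResolutions}(1) (where the same $T_i$ is identified with $\Supp_G(\tor^R_i(M,S))$), and making sure the passage from the $G$-grading to the notation $\Supp_\G$ is consistent. If I wanted to be slightly more self-contained I could remark that this is the multigraded analogue of the classical inequality $a_i(M)+i\le \reg(M)$ mentioned just before Theorem~\ref{lemSuppHi}, but for the proof itself a one-line citation of Theorem~\ref{lemSuppHi} and Lemma~\ref{LemResolutions}(1) suffices.
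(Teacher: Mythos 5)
Your proof is correct and follows exactly the route the paper intends: apply Lemma~\ref{LemResolutions}(1) to produce a $G$-graded free resolution with shifts $T_i=\Supp_G(\tor^R_i(M,S))$, note the hypothesis of Theorem~\ref{lemSuppHi} is vacuous for a resolution, and substitute. The paper itself dispatches the corollary with a one-line citation of these two results, so there is nothing to add.
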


If $S$ is Noetherian, Lemma \ref{LemResolutions} (case (3)) implies the following:

\begin{cor}\label{corSuppHi2}
 Assume that $(S,\mm ,k)$ is local Noetherian and let $M$ be a finitely generated graded $R$-module. Then, for any $\ell$,
\[
\begin{array}{rl}
 \Supp_{\G}(H^\ell_B(M))&\subset \bigcup_{i\geq 0}(\Supp_{\G}(H^{\ell +i}_B(R))+\Supp_{\G}(\tor_i^R(M,k)))\\
 &\subset \bigcup_{i\geq j\geq 0}(\Supp_{\G}(H^{\ell +i}_B(R))+\Supp_{\G}(\tor_j^R(M,S))).\\
 \end{array}
\]
\end{cor}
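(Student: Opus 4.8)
The plan is to derive Corollary~\ref{corSuppHi2} directly from the two results it is placed after, namely Corollary~\ref{corSuppHi} and Lemma~\ref{LemResolutions}~(3). First I would observe that, since $(S,\mm ,k)$ is local Noetherian and $M$ is finitely generated over $R=S[X_1,\hdots,X_n]$, the hypotheses of Theorem~\ref{lemSuppHi} are met for a suitable resolution: by Lemma~\ref{LemResolutions}~(2), applied with any $\phi\in\Hom_\ZZ(\G,\RR)$ positive on the $\gamma_i$ (such a $\phi$ exists in the relevant polynomial-ring situation, and the statement is vacuous otherwise), there is a $\G$-graded free resolution $F_\bullet$ of $M$ whose twists $T_i$ lie in $\bigcup_{\ell\le i}\Supp_\G(\tor^R_\ell(M,k))$. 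Since a free resolution has $H_j(F_\bullet)=0$ for $j\ge 1$, the cohomological-dimension condition $\cd_B(H_j(F_\bullet))\le \ell+j$ of Theorem~\ref{lemSuppHi} holds trivially.

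Next I would feed this resolution into Theorem~\ref{lemSuppHi} to get
\[
\Supp_\G(H^\ell_B(M))\subset \bigcup_{i\ge 0}\bigl(\Supp_\G(H^{\ell+i}_B(R))+T_i\bigr)\subset \bigcup_{i\ge j\ge 0}\bigl(\Supp_\G(H^{\ell+i}_B(R))+\Supp_\G(\tor^R_j(M,k))\bigr),
\]
which is the first displayed inclusion (after re-indexing, noting $H^{\ell+i}_B(R)$ only depends on $i$ so enlarging the index set of $j$ up to $i$ is harmless). For the second inclusion I would invoke Lemma~\ref{LemResolutions}~(3): $\Supp_\G(\tor^R_j(M,k))\subseteq \bigcup_{j'\le j}\Supp_\G(\tor^R_{j'}(M,S))$, and substituting this into the first inclusion and absorbing the extra index gives
\[
\Supp_\G(H^\ell_B(M))\subset \bigcup_{i\ge j\ge 0}\bigl(\Supp_\G(H^{\ell+i}_B(R))+\Supp_\G(\tor^R_j(M,S))\bigr),
\]
as claimed. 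Alternatively, the first inclusion is literally Corollary~\ref{corSuppHi} with $S$ replaced by $k$ in the $\tor$ terms and $H^{\ell+i}_B(R)$ unchanged, once one knows $R$ admits a finite free resolution story over itself; but going through Theorem~\ref{lemSuppHi} directly is cleanest since Corollary~\ref{corSuppHi} was stated only for $S$ a field.

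The main obstacle I anticipate is the bookkeeping with the index sets: one must check that replacing $T_i\subseteq\bigcup_{\ell\le i}\Supp_\G(\tor^R_\ell(M,k))$ and then each $\tor^R_\ell(M,k)$-support by $\bigcup_{j\le \ell}\Supp_\G(\tor^R_j(M,S))$ does not lose the ordering constraint $i\ge j$, and that the $H^{\ell+i}_B(R)$ term tolerates the coarser range of $i$. Since $\Supp_\G(H^{\ell+i}_B(R))$ does not shrink when we enlarge the set of pairs $(i,j)$ with $i\ge j\ge 0$, and every twist actually occurring sits at some $i$ with a compatible $j\le i$, the unions only grow, so all inclusions go the right way. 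There is also a minor point that Lemma~\ref{LemResolutions}~(2) produces possibly infinitely generated $F_i$ in general, but Theorem~\ref{lemSuppHi} needs no finiteness of the $E_i$, only $F_i=0$ for $i<0$ and control of the twists, so this causes no trouble.
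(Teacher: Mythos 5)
Your overall scaffolding (feed a free resolution with controlled twists into Theorem~\ref{lemSuppHi}, then transfer $\tor(M,k)$-supports to $\tor(M,S)$-supports via Lemma~\ref{LemResolutions}(3)) is right, but there is a genuine gap in how you obtain the \emph{first} inclusion. You quote Lemma~\ref{LemResolutions}(2) as producing a resolution with twists $T_i\subseteq\bigcup_{\ell\le i}\Supp_\G(\tor^R_\ell(M,k))$; the lemma actually gives $T_i\subseteq\bigcup_{\ell\le i}\Supp_\G(\tor^R_\ell(M,S))$. With the lemma as stated, plugging into Theorem~\ref{lemSuppHi} yields only the second, weaker inclusion $\Supp_\G(H^\ell_B(M))\subset\bigcup_{i\ge j\ge 0}(\Supp_\G(H^{\ell+i}_B(R))+\Supp_\G(\tor^R_j(M,S)))$; the tighter first inclusion involving $\tor^R_i(M,k)$ is not obtained, and you cannot then ``use Lemma~(3) to pass from the first to the second'' because the first was never established. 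Moreover, Lemma~\ref{LemResolutions}(2) carries a positivity hypothesis (existence of $\phi\in\Hom_\ZZ(G,\RR)$ positive on the $\gamma_i$, plus a lower bound on $\phi(\deg a)$ for $a\in M$) that is not part of the hypotheses of Corollary~\ref{corSuppHi2}; dismissing this as ``the statement is vacuous otherwise'' is not justified.

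The intended route for the first inclusion is the graded-local analogue of Lemma~\ref{LemResolutions}(1): since $(S,\mm,k)$ is local Noetherian and $M$ is finitely generated and $\G$-graded, $R$ is graded-local with graded maximal ideal $\mm R+R_+$ and residue field $k$, so a minimal $\G$-graded free $R$-resolution $F_\bullet$ of $M$ exists (graded Nakayama), and its twists satisfy $T_i=\Supp_\G(\tor^R_i(M,k))$ because $\tor^R_i(M,k)=F_i\otimes_R k$. Feeding this $F_\bullet$ into Theorem~\ref{lemSuppHi} (where $H_j(F_\bullet)=0$ for $j\ge 1$, so the $\cd_B$ hypothesis is vacuous) gives exactly the first displayed inclusion with $\tor^R_i(M,k)$, and then Lemma~\ref{LemResolutions}(3) converts each $\Supp_\G(\tor^R_i(M,k))$ to $\bigcup_{j\le i}\Supp_\G(\tor^R_j(M,S))$, which after re-indexing is the second inclusion. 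Your closing ``alternatively\dots'' remark points in this direction, but the main body of your argument rests on the misquoted Lemma~(2), which does not deliver the first inclusion.
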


After passing to localization, Corollary \ref{corSuppHi2} shows that:

\begin{cor}\label{corSuppHi3}
Let $M$ be a finitely generated graded $R$-module, with $S$ Noetherian. Then, for any $\ell$,
\[
 \Supp_{\G}(H^\ell_B(M))\subset \bigcup_{i\geq j\geq 0}(\Supp_{\G}(H^{\ell +i}_B(R))+\Supp_{\G}(\tor_j^R(M,S))).
 \]
\end{cor}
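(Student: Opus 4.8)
The statement to prove is Corollary~\ref{corSuppHi3}, which drops the local hypothesis on $S$ from Corollary~\ref{corSuppHi2} and only requires $S$ Noetherian. The natural strategy is localization: for a fixed $\ell$ and a fixed $\gamma\in\Supp_\G(H^\ell_B(M))$, I would like to reduce to the case already handled in Corollary~\ref{corSuppHi2}. First I would observe that $H^\ell_B(M)_\gamma\neq 0$ is a nonzero $S$-module, so there is a maximal ideal $\mm\subset S$ with $(H^\ell_B(M)_\gamma)_\mm\neq 0$. Set $S':=S_\mm$, $R':=R\otimes_S S'=S'[X_1,\dots,X_n]$, $B':=BR'$ and $M':=M\otimes_S S'=M\otimes_R R'$. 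Since localization at the multiplicative set $S\setminus\mm\subset S\subseteq R_0$ is exact and commutes with the \v Cech complex built on generators of $B$ (those generators are homogeneous elements of $R$, unaffected by the base change, and localization commutes with finite direct sums and with the relevant tensor/Hom operations on the \v Cech complex), we get $H^\ell_{B'}(M')\cong H^\ell_B(M)\otimes_S S'$ as $\G$-graded $R'$-modules, compatibly with the grading; hence $\gamma\in\Supp_\G(H^\ell_{B'}(M'))$.

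\textbf{Key steps.} (1) Verify the base-change isomorphism $H^i_{B'}(M')\cong H^i_B(M)\otimes_S S'$ for all $i$; this is exactly the content of the ``base change'' part of Lemma~\ref{LemResolutions} (and follows from flatness of $S\to S'$ together with the fact that \v Cech cohomology is computed by a complex of localizations that is built functorially from $R$ and commutes with $-\otimes_S S'$). The same flatness gives $\tor^{R'}_j(M',S')\cong \tor^R_j(M,S)\otimes_S S'$, so $\Supp_\G(\tor^{R'}_j(M',S'))\subseteq\Supp_\G(\tor^R_j(M,S))$. Likewise $H^{\ell+i}_{B'}(R')\cong H^{\ell+i}_B(R)\otimes_S S'$, giving $\Supp_\G(H^{\ell+i}_{B'}(R'))\subseteq\Supp_\G(H^{\ell+i}_B(R))$. (2) Apply Corollary~\ref{corSuppHi2} to $(S',\mm S',k)$, $R'$, $B'$, $M'$ — note $S'$ is local Noetherian and $M'$ is finitely generated over $R'$ — to obtain
\[
\Supp_\G(H^\ell_{B'}(M'))\subset \bigcup_{i\geq j\geq 0}\bigl(\Supp_\G(H^{\ell+i}_{B'}(R'))+\Supp_\G(\tor^{R'}_j(M',S'))\bigr).
\]
(3) Chase $\gamma$ through: $\gamma\in\Supp_\G(H^\ell_{B'}(M'))$, so $\gamma=\alpha+\beta$ with $\alpha\in\Supp_\G(H^{\ell+i}_{B'}(R'))\subseteq\Supp_\G(H^{\ell+i}_B(R))$ and $\beta\in\Supp_\G(\tor^{R'}_j(M',S'))\subseteq\Supp_\G(\tor^R_j(M,S))$ for some $i\geq j\geq 0$. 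Since $\gamma$ was an arbitrary element of $\Supp_\G(H^\ell_B(M))$, this is precisely the claimed inclusion.

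\textbf{Main obstacle.} The only genuinely delicate point is step (1): making sure that forming local cohomology with support in $B$ (defined via the \v Cech complex on a finite generating set of $B$) commutes with the flat base change $S\to S_\mm$, \emph{and} that it does so respecting the $\G$-grading so that the support statement transfers. This is standard — the \v Cech complex on homogeneous generators $g_1,\dots,g_s$ of $B$ consists of localizations $R_{g_{i_1}\cdots g_{i_k}}$, each of which satisfies $(R_{g_{i_1}\cdots g_{i_k}})\otimes_S S_\mm = (R'_{g_{i_1}\cdots g_{i_k}})$, the tensor product with $M$ behaves likewise, and $-\otimes_S S_\mm$ is exact, so it passes through cohomology — but it should be stated cleanly, invoking the relevant part of Lemma~\ref{LemResolutions} for the Tor computation and flatness for the \v Cech computation. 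Everything else is a formal support chase.
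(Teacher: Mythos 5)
Your proposal is correct and follows essentially the same route as the paper: localize $S$ at a prime (the paper) or maximal ideal (you) where $H^\ell_B(M)_\gamma$ survives, invoke commutation of local cohomology and Tor with localization (graded compatibly since $S\subseteq R_0$), and apply Corollary~\ref{corSuppHi2}. The only difference is that you spell out the \v Cech-complex base-change verification in more detail than the paper does, which is a fine expository choice but not a different argument.
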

 
\begin{proof} 
Let $\gamma\in  \Supp_{\G}(H^\ell_B(M))$. Then $H^\ell_B(M)_\gamma \not= 0$, hence there exists 
 $\pp\in {\rm Spec} (S)$ such that $(H^\ell_B(M)_\gamma)\otimes_S S_\pp = H^\ell_{B\otimes_S S_\pp}(M\otimes_S S_\pp )\not= 0$.
 Applying Corollary \ref{corSuppHi2} the result follows since both the local cohomology functor and the
 Tor functor commute with localization in $S$, and preserves grading as $S\subset R_0$.
\end{proof}
  
 Finally, Lemma \ref{LemResolutions} (case (2))  gives:
 
\begin{cor}\label{corSuppHi4}
Let $M$ be a graded $R$-module, and assume that there exists $\phi\in\Hom_{\ZZ}(G,\RR )$ such that $\phi (\deg (x_i))>0$ for all $i$. If $\phi (\deg (a) )>m$ for some $m\in \RR$ and any $a \in M$, then, for any $\ell$,
\[
 \Supp_{\G}(H^\ell_B(M))\subset \bigcup_{i\geq j\geq 0}(\Supp_{\G}(H^{\ell +i}_B(R))+\Supp_{\G}(\tor_j^R(M,S))).
 \]
\end{cor}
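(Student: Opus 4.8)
The plan is to reduce Corollary~\ref{corSuppHi4} to Corollary~\ref{corSuppHi3} by a standard localization argument, exactly parallel to the proof of Corollary~\ref{corSuppHi3}, but where the Noetherian hypothesis on $S$ was only used (via Lemma~\ref{LemResolutions} (3)) to produce a resolution with controlled shifts. Here the positivity hypothesis on the grading supplies such a resolution directly through Lemma~\ref{LemResolutions} (2), so no Noetherian assumption is needed.

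First I would invoke Lemma~\ref{LemResolutions} (2): since $\phi(\deg(x_i))>0$ for all $i$ and $\phi(\deg(a))>m$ for every $a\in M$, there is a $G$-graded free resolution $F_\bullet$ of $M$ with $F_i=\bigoplus_{j\in E_i}R[-\gamma_{ij}]$ and each $\gamma_{ij}\in\bigcup_{0\le\ell\le i}\Supp_G(\tor^R_\ell(M,S))$. Set $T_i:=\{\gamma_{ij}\ \vert\ j\in E_i\}\subseteq\bigcup_{\ell\le i}\Supp_G(\tor^R_\ell(M,S))$. Next I would apply Theorem~\ref{lemSuppHi}: being an honest resolution, $H_j(F_\bullet)=0$ for $j\ge 1$, so the hypothesis $\cd_B(H_j(F_\bullet))\le\ell+j$ holds trivially for all $j\ge 1$, and the theorem gives
\[
\Supp_{\G}(H^\ell_B(M))\subset \bigcup_{i\geq 0}(\Supp_{\G}(H^{\ell +i}_B(R))+T_i)\subseteq \bigcup_{i\geq j\geq 0}(\Supp_{\G}(H^{\ell +i}_B(R))+\Supp_{\G}(\tor_j^R(M,S))),
\]
which is exactly the asserted inclusion. (In fact this already finishes the proof without any further localization, since Theorem~\ref{lemSuppHi} applies over the arbitrary base ring $S$; the localization step only becomes necessary if one wants to relax the hypothesis further, and is unnecessary here.)

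There is essentially no obstacle: the only subtlety is checking that Theorem~\ref{lemSuppHi} is genuinely applicable, i.e.\ that the complex $F_\bullet$ from Lemma~\ref{LemResolutions} (2) satisfies $F_i=0$ for $i<0$ and $H_0(F_\bullet)=M$ — both immediate since it is a resolution of $M$ — and that its shifts lie in the claimed union of supports of Tor modules, which is the content of Lemma~\ref{LemResolutions} (2). One should also note that $\Supp_G(\tor^R_\ell(M,S))$ makes sense and $\tor^R_\ell(M,S)$ is $G$-graded because $S\subseteq R_0$, so all the graded gadgets involved are honestly $G$-graded. Hence the statement follows formally by combining Lemma~\ref{LemResolutions} (2) with Theorem~\ref{lemSuppHi}, in complete analogy with Corollaries~\ref{corSuppHi} and~\ref{corSuppHi2}.
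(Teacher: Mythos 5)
Your proof is correct and is essentially the paper's: the paper's (one-line) justification is ``Lemma~\ref{LemResolutions} (case (2)) gives:~'', which is exactly the combination you spell out — use Lemma~\ref{LemResolutions}(2) to get a resolution $F_\bullet$ with $\gamma_{ij}\in\bigcup_{0\le\ell\le i}\Supp_G(\tor^R_\ell(M,S))$, observe $H_j(F_\bullet)=0$ for $j\ge 1$ so the hypothesis of Theorem~\ref{lemSuppHi} is vacuous, and apply that theorem. Your opening digression about reducing to Corollary~\ref{corSuppHi3} via localization is a red herring, but you correctly retract it, and the argument you actually run is the intended one.
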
 

Notice that taking $G=\ZZ$ and $\deg(X_i)=1$, Corollaries \ref{corSuppHi}, \ref{corSuppHi2}, \ref{corSuppHi3} and \ref{corSuppHi4} give the well know bound $a_i(M)+i \leq \max_i\{b_i(M) - i\}$. 
 

\medskip

\section{Castelnuovo-Mumford regularity}\label{secCMR}

In this section we give a definition for a $\G$-graded $R$-module $M$ and $\gamma\in \G$ to be \textsl{weakly $\gamma$-regular} or just \textsl{$\gamma$-regular}, with respect  to a graded $R$-ideal $B$, depending if $\gamma$ is or is not on the shifted support of some local cohomology modules of $M$ with support in $B$ (cf.\ \ref{defRegLC}). 

The fact that weak regularity implies regularity in the classical case is generalized using Lemma \ref{wR}.  The corresponding results are given in Theorem \ref{wRtoR} that extends and refines  the results of \cite{MlS04} on this issue. It is proved that the vanishing of local cohomology modules in a finite number of homological and internal degrees provides a regularity criterion, as in the classical case.

The Castelnuovo-Mumford regularity of $\ZZ$-graded $R$-module $M$ is a cohomological invariant that bounds the degree of minimal generators of a minimal free resolution.  In the standard graded case, if $F_\bullet$ is a minimal graded free resolution of $M$, then the degrees of the generators of the modules $F_i$ are bounded above by $ \reg(M)+i$ (cf.\ Example \ref{exmpSuppTor}).

As we mentioned in the introduction, partial results were obtained in \cite{MlS04} and in \cite{STW06} and \cite{Ha07}. In  \cite{MlS04} by estimating, in the toric situation, the shifts in a resolution at the sheaf level, and in the other works by considering,
in special cases, 
variants of the definition of regularity. 
In Theorem \ref{ThmLCtoTor1} we provide an estimate for the support of $\tor^R_j(M,S)$. This estimate is refined 
in Theorem \ref{ThmLCtoTor2} 
under additional hypotheses that are often satisfied in the toric setting (results in  \cite{MlS04} are given in this 
situation).

Next, we provide bounds for the truncation $M$. Precisely, Lemma \ref{tronctor} gives a multigraded variant for the bound
on the shifts in a minimal free resolution of $M_{\geq d}$. Here ``$M_{\geq d}$" is replaced by $M_{\Ss }$, with $\Ss$ a $\Cc$-stable subset of $G$.

As in the graded case this results take particular interest when $d\geq \reg(M)$, which corresponds here to study 
$M_\Ss$ for $\Ss\subseteq \reg_B(M)$. This is done in Theorem \ref{ThmLCtoTor3}. In particular taking $\Ss=\mu+\Cc$, with $\mu\in  \reg_B(M)$ we get as corollary the results on truncation  of \cite{MlS04}, as well as several results in \cite{STW06}. 

An alternative way of getting this type of result, that works in many interesting cases, is through the 
computation of the local cohomology of the restriction of $M$ to some degrees. This is explained in Lemma
\ref{regtrunc} and Proposition \ref{propRegEll}.

We then study the example of a form of bidegree $(1,1)$ is a standard bigraded polynomial ring in four variables, over a field, corresponding to $\PP^1\times \PP^1$.  This extremely simple example already shows that the support of local cohomology 
depend upon the form, and illustrate some other features of $\ZZ^2$-graded regularity.

In a last part, we recall some results on Hilbert polynomials in the multigraded setting and connect them with our
results on regularity. We also prove a lemma that gives a short and elegant way to extend to standard multigrading
the classical theory of Hilbert polynomials. It can be used as well for the product of anisotropic projective spaces, 
and gives a way to handle any particular case.


\subsection{Definition of regularity and persistence of cohomological vanishing}

Let $S$ be a commutative ring, $\G$ an abelian group and $R:=S[X_1,\hdots,X_n]$, with $\deg(X_i)=\gamma_i\in G$ and $\deg (s)=0$ for $s\in S$. Denote by $\Cc$ the monoid generated by $\{\gamma_1,\hdots, \gamma_n\}$ and let
$B\subseteq R_+ :=(X_1,\hdots,X_n)$ be a finitely generated graded $R$-ideal.

\begin{defn}\label{defEE}
 Set $\EE_0:=\{0\}$, $\EE_l:=\{\gamma_{i_1}+\cdots+\gamma_{i_l}\ : \ i_1<\cdots<i_l\}$ for $l> 0$, $\EE_{-1}:=-\EE_1$
 and $\EE_l =\emptyset$ for $l<-1$.
\end{defn}

In addition to the definition of $\EE_i$, we introduce the following sets already used by Hoffman and Wang, Maclagan and Smith and other authors. For $i>0$,
$$
\FF_i :=\{ \gamma_{j_1}+\cdots + \gamma_{j_i}\ \vert\ j_1\leq\cdots \leq j_i\}
$$
and $\FF_{i}:=\EE_i$ for $i\leq 0$. It is clear that $\EE_i \subset \FF_i$.

Observe that if $\gamma_i=\gamma$ for all $i$, $\EE_l=\{l\cdot \gamma\}$ when $\EE_l \not= \emptyset$ and
$\FF_l=\{l\cdot \gamma\}$ when $\FF_l \not= \emptyset$.

\begin{defn}\label{defRegLC}
For $\gamma\in \G$ and $\ell\in \ZZ_{\geq 0}$, a graded $R$-module $M$ is  \textsl{very  weakly $\gamma$-regular at level $\ell$} if 
\[
 \gamma \not\in \bigcup_{i\geq \ell} \Supp_\G(H^i_B(M))+\EE_{i-1}.
\] 
$M$ is  very weakly \textsl{$\gamma$-regular} if it is  very weakly $\gamma$-regular at level 0.

$M$ is  \textsl{weakly $\gamma$-regular at level $\ell$} if 
\[
 \gamma \not\in \bigcup_{i\geq \ell} \Supp_\G(H^i_B(M))+\FF_{i-1}.
\] 
$M$ is  weakly \textsl{$\gamma$-regular} if it is  weakly $\gamma$-regular at level 0.

If further $M$ is weakly $\gamma'$-regular (resp. weakly $\gamma'$-regular at level $\ell$) for any $\gamma'\in \gamma +\Cc$, then $M$ is
$\gamma$-regular (resp. $\gamma$-regular at level $\ell$). One writes $ \reg_B (M):= \reg_B^0 (M)$ with
\[
 \reg_B^\ell (M):=\{ \gamma\in \G\ \vert\ M\ {\rm is}\ \gamma {\rm -regular\ at\ level}\ \ell \} .
\]
\end{defn}

It immediately follows from the definition that $\reg_B^\ell (M)$ is the maximal set
$\SS$ of elements in $\G$ such that $\SS+\Cc =\SS$ and $M$ is weakly $\gamma$-regular at level $\ell$ for any $\gamma\in \SS$.

\medskip

Before establishing the relation between \textit{weak $\gamma$-regularity at level $\ell$} and \textit{$\gamma$-regularity at level $\ell$}, we introduce a definition and some notation.

\begin{defn}\label{pers}
Let $M$ be a graded module and $\gamma \in G$. Then $B$-regularity is $\gamma$-persistent with respect to
$M$ if, for any $\eta\in G$,
$$
\eta\not\in \bigcup_{i>0} \Supp_\G(H^i_B(M))+(i-1)\gamma \quad \Rightarrow\quad \eta +\gamma \not\in \bigcup_{i>0} \Supp_\G(H^i_B(M))+(i-1)\gamma .
$$
If $B$-regularity is $\gamma$-persistent with respect to any graded module, one simply says that 
$B$-regularity is $\gamma$-persistent.
\end{defn}

We can restate Corollary \ref{pers1dir} in the following form, generalizing also Theorem 4.3 in \cite{MlS04}. 

\begin{lem}\label{pers1dir-2}
If  $B\subseteq  \sqrt{(R_\gamma )+\ann_R(M)}$, then $B$-regularity  is $\gamma$-persistent with respect to $M$. Furthermore, if $\eta\not\in \bigcup_{i>0} \Supp_\G(H^i_B(M))+(i-1)\gamma$, then
\[
M_{\eta +j\gamma }/R_{\gamma}M_{\eta +(j-1)\gamma }=H^0_B(M)_{\eta +j\gamma }/R_{\gamma}H^0_B(M)_{\eta +(j-1)\gamma }, \forall j>0.
\]
\end{lem}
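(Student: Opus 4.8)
The plan is to derive Lemma \ref{pers1dir-2} as a direct repackaging of Corollary \ref{pers1dir}, which already contains both assertions; the only work is a change of indexing variables and an observation about the sets $\EEE_i^I$ when $I=(R_\gamma)$. First I would set $I:=(R_\gamma)$, the ideal generated by all homogeneous elements of degree $\gamma$, so that the hypothesis $B\subseteq\sqrt{(R_\gamma)+\ann_R(M)}$ is exactly the hypothesis of Corollary \ref{pers1dir}. Since every generator of $I$ has degree $\gamma$, the set $\EEE_i^I$ consists of sums of $i$ such degrees and hence $\EEE_i^I=\{i\gamma\}$ (when nonempty), which converts the statements $H^{\ell+i}_B(M)_{\mu+i\gamma}=0,\ \forall i\ge0$ appearing in Corollary \ref{pers1dir} into the ``shifted support'' language of Definition \ref{pers}.

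Next, for the persistence claim: given $\eta\notin\bigcup_{i>0}\Supp_\G(H^i_B(M))+(i-1)\gamma$, I would set $\mu:=\eta-\gamma\cdot(\text{appropriate offset})$ so as to match the pattern $\mu+i\gamma$ in part (1) of Corollary \ref{pers1dir} with $\ell=1$. Concretely, writing the hypothesis out, $\eta\notin\Supp_\G(H^{1+i}_B(M))+i\gamma$ for all $i\ge0$ means $H^{1+i}_B(M)_{\eta-i\gamma}=0$ for all $i\ge0$; putting $\mu:=\eta$ this is precisely ``$H^{\ell+i}_B(M)_{\mu-i\gamma}=0,\ \forall i\ge0$'' after reindexing, and Corollary \ref{pers1dir}(1) yields $H^{\ell+i}_B(M)_{(\mu-\gamma)-i\gamma}=0$, i.e. $H^{1+i}_B(M)_{\eta-\gamma-i\gamma}=0$ for all $i\ge0$, which rewrites as $\eta+\gamma\notin\bigcup_{i>0}\Supp_\G(H^i_B(M))+(i-1)\gamma$ after shifting the index $i\mapsto i-1$. (One must be mildly careful about the sign conventions in the internal grading of the \v Cech--Koszul double complex, matching exactly the indexing used in the proof of Lemma \ref{wR}; this bookkeeping is the one place where an off-by-one or a sign slip could creep in, so I would state the translation dictionary $\Supp_\G(H^i_B(M))+(i-1)\gamma\ni\eta\iff H^i_B(M)_{\eta-(i-1)\gamma}\neq0$ explicitly before invoking the corollary.) Since $I=(R_\gamma)$ does not depend on $M$ once $\ann_R(M)$ is absorbed, and the hypothesis is the only place $M$ enters, the ``with respect to any graded module'' version under $B\subseteq\sqrt{R_\gamma}$ also follows.

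For the second, quantitative assertion, I would apply part (2) of Corollary \ref{pers1dir}: under the same hypothesis on $\eta$, one has in particular $H^{i+1}_B(M)_{\eta-i\gamma}=0$ for all $i\ge0$ (this is the $i\ge0$ part of the ``$i>0$'' union together with the $i=0$ term $H^1_B(M)_\eta=0$ — note $\EEE_1^I=\{\gamma\}$ so the $i=0$, $j=0$ contribution $H^0_B$ is not what is needed here; rather one reads off $H^{i+1}_B(M)_{\eta-i\gamma}=0$), so Corollary \ref{pers1dir}(2) gives $M_{\eta+j\gamma}/R_\gamma M_{\eta+(j-1)\gamma}=H^0_B(M)_{\eta+j\gamma}/R_\gamma H^0_B(M)_{\eta+(j-1)\gamma}$ for all $j>0$, which is exactly the displayed formula. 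The main (and really only) obstacle is purely notational: aligning the three different indexing conventions — the $\FF_i$ / $\EEE_i^I$ convention of the definitions, the ``$\Supp_\G(H^i_B(M))+(i-1)\gamma$'' convention of Definition \ref{pers}, and the ``$H^{\ell+i}_B(M)_{\mu+i\gamma}$'' convention of Corollary \ref{pers1dir} — and checking that ``$i>0$'' versus ``$i\ge0$'' and the level $\ell=1$ all match up; there is no new mathematical content beyond Corollary \ref{pers1dir} and the elementary identity $\EEE_i^{(R_\gamma)}=\{i\gamma\}$.
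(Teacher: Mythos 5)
Your overall plan is exactly the paper's: the paper gives no independent argument for Lemma~\ref{pers1dir-2}, it simply says ``We can restate Corollary~\ref{pers1dir} in the following form,'' and your proposal is precisely that restatement, via the identity $\EEE_i^{(R_\gamma)}=\{i\gamma\}$ and the dictionary $\eta\in\Supp_\G(H^i_B(M))+(i-1)\gamma\iff H^i_B(M)_{\eta-(i-1)\gamma}\neq 0$.

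However, your bookkeeping does contain the kind of slip you warned yourself about, and as written the chain does not close. Two points. (a) You silently read Corollary~\ref{pers1dir} with internal degree $\mu-i\gamma$, whereas the printed statement has $\mu+i\gamma$ (and $(\mu+\gamma)+i\gamma$ in the conclusion). The minus-sign reading is the one compatible with the proof of Lemma~\ref{wR} (there the $E_1$-page has internal degree $\mu-\gamma'$ for $\gamma'\in\EEE_j^I$), so the printed plus signs are almost certainly a typo; but since you are proving the lemma by reduction to the corollary, you should say explicitly that you are using the minus-sign version and why. (b) Having chosen the minus-sign reading of the hypothesis, the corollary's conclusion must be $H^{\ell+i}_B(M)_{(\mu+\gamma)-i\gamma}=0$ --- the $+\gamma$ shift is untouched, only the $i\gamma$ terms flip. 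You instead wrote $H^{\ell+i}_B(M)_{(\mu-\gamma)-i\gamma}=0$, i.e.\ $H^{1+i}_B(M)_{(\eta-\gamma)-i\gamma}=0$. By your own dictionary this rewrites to $\eta-\gamma\notin\bigcup_{j>0}\Supp_\G(H^j_B(M))+(j-1)\gamma$, which is persistence in the \emph{wrong} direction, not the claimed $\eta+\gamma\notin\bigcup_{j>0}\Supp_\G(H^j_B(M))+(j-1)\gamma$. Your final sentence asserts the correct statement, but it does not follow from the preceding line; the two slips merely hide each other. The correct chain is: hypothesis $\Leftrightarrow H^{1+i}_B(M)_{\eta-i\gamma}=0,\ \forall i\ge 0$; apply Corollary~\ref{pers1dir}(1) with $\ell=1$, $\mu=\eta$ (minus-sign reading) to get $H^{1+i}_B(M)_{(\eta+\gamma)-i\gamma}=0,\ \forall i\ge 0$; reindex $j=i+1$ to get $\eta+\gamma\notin\bigcup_{j>0}\Supp_\G(H^j_B(M))+(j-1)\gamma$. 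With that corrected, the second assertion does read off Corollary~\ref{pers1dir}(2) with $\mu=\eta$ exactly as you describe.
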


Notice that it in particular implies that $B$-regularity is $\gamma$-persistent if $B\subseteq  \sqrt{(R_\gamma )}$. \\

 Let $\{ \gamma_1,\ldots ,\gamma_n\}=\{ \mu_1,\ldots ,\mu_m\}$, with $\mu_i\not= \mu_j$ for $i\not= j$ and set $B_i:=(R_{\mu_i} )$. 

\begin{thm}\label{wRtoR}
Let  $M$ be a graded $R$-module. 

\begin{enumerate}
\item If $B\subset  \sqrt{B_i+\ann_R(M)}$ for every $i\in E$, then  $B$-regularity is $\gamma$-persistent with respect to $M$, for any $\gamma \not= 0$ in the submonoid
of $\Cc$ generated by the $\mu_i$'s with $i\in E$.

\item If $M$ is weakly $\gamma$-regular at level $\ell$ and $\ell  >\cd_B( R/B_i+\ann_R(M))$  for every $i$, then $M$ is  $\gamma$-regular at level $\ell$.

\item If $M$ is weakly $\gamma$-regular at level $1$ and $B\subset \sqrt{B_i+\ann_R(M)}$ for some $i$,  then $(M/B_i M)_{\eta} =(H^0_B(M)/B_i H^0_B(M))_{\eta} $ for $\eta \in \gamma +\mu_i{\bf Z}_{>0}$.

\item If $B\subset  \sqrt{B_i+\ann_R(M)}$ for every $i$ and $M$ is $\gamma$-regular, then $(M/R_+ M)_{\gamma +\eta}=0$ for all $0\neq \eta \in \Cc$.
\end{enumerate}
\end{thm}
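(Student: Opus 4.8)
The plan is to derive all four parts from the persistence machinery of Section 3, namely Corollary \ref{pers1dir} (equivalently its restatement Lemma \ref{pers1dir-2}) together with Lemma \ref{wR}, by exploiting that the monoid $\Cc$ is generated by the distinct degrees $\mu_1,\dots,\mu_m$ and that weak regularity at level $\ell$ is a statement about membership in $\bigcup_{i\geq\ell}\Supp_\G(H^i_B(M))+\FF_{i-1}$. The recurring observation I would isolate first is that $\FF_{i-1}$ is the set of sums of $i-1$ of the $\gamma_j$'s with repetitions allowed, so if $\gamma$ is weakly regular (at level $\ell$) then for any single generator $\mu_i$ and any $\eta\in\gamma+\mu_i\ZZ_{\geq 0}$ we land in the hypothesis $\eta\not\in\bigcup_{k>0}\Supp_\G(H^k_B(M))+(k-1)\mu_i$ of Lemma \ref{pers1dir-2}; this is the bridge between the ``$\FF$'' description and the one-direction persistence statement, which only moves in the direction $\mu_i$.

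For part (1): since $B\subset\sqrt{B_i+\ann_R(M)}$ for $i\in E$, Lemma \ref{pers1dir-2} gives that $B$-regularity is $\mu_i$-persistent with respect to $M$ for each such $i$. Persistence in each of finitely many directions $\mu_i$ composes: if $\eta$ lies outside $\bigcup_{k>0}\Supp_\G(H^k_B(M))+(k-1)\gamma$ with $\gamma$ a nonnegative combination of the $\mu_i$ ($i\in E$), one peels off one generator at a time, applying $\mu_i$-persistence repeatedly, to conclude $\eta+\gamma$ is outside the same set. The bookkeeping point to check is that the shift in the hypothesis changes with the direction being added; I would phrase the induction on the length of a representation $\gamma=\sum \mu_{i_k}$ so that at each step the ``current'' $\gamma$ in the persistence statement matches the generator being added.

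For part (2): this is the direct analogue of ``weakly regular $\Rightarrow$ regular.'' Given $\gamma'\in\gamma+\Cc$, write $\gamma'-\gamma$ as a sum of generators $\mu_i$ and use part (1) (applicable because $B\subset\sqrt{B_i+\ann_R(M)}$ for every $i$, so $E$ can be taken to be all indices, and $\ell>\cd_B(R/(B_i+\ann_R(M)))$ lets us invoke Lemma \ref{wR} with $I=(R_{\mu_i})$ to get persistence at level $\ell$, not merely level $1$) to propagate weak $\gamma$-regularity at level $\ell$ to weak $\gamma'$-regularity at level $\ell$; hence $M$ is $\gamma$-regular at level $\ell$ by definition. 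The subtlety is that Lemma \ref{wR} is what upgrades persistence to hold at the given level $\ell$ under the cohomological-dimension hypothesis, whereas the bare Lemma \ref{pers1dir-2} is a level-$1$ statement; I would make explicit that the strict inequality $\ell>\cd_B(R/(B_i+\ann_R(M)))$ is exactly the hypothesis of the first bullet of Lemma \ref{wR}.

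For parts (3) and (4): part (3) is the ``equality'' half — weak $\gamma$-regularity at level $1$ says $\gamma$ avoids $\bigcup_{k\geq 1}\Supp_\G(H^k_B(M))+\FF_{k-1}$, which in particular puts us in both hypotheses of Lemma \ref{wR} (resp. the second conclusion of Lemma \ref{pers1dir-2}) with $I=(R_{\mu_i})=B_i$, giving $(M/B_iM)_\eta=(H^0_B(M)/B_iH^0_B(M))_\eta$ for $\eta\in\gamma+\mu_i\ZZ_{>0}$. Part (4): if $M$ is $\gamma$-regular then it is weakly $\gamma'$-regular for all $\gamma'\in\gamma+\Cc$, so by part (3) applied at suitable points one gets $(M/B_iM)_{\gamma'}=(H^0_B(M)/B_iH^0_B(M))_{\gamma'}$; then use $B\subset\sqrt{B_i+\ann_R(M)}$ to see $H^0_B(M)$ is $B_i$-torsion-like in the relevant range, forcing $(H^0_B(M)/B_iH^0_B(M))_{\gamma'}=0$, hence $(M/B_iM)_{\gamma'}=0$ for each $i$; finally assemble over all $i$ to conclude $(M/R_+M)_{\gamma+\eta}=0$ for $0\neq\eta\in\Cc$, since $R_+=\sum_i B_i$ (indeed each nonzero $\eta\in\Cc$ can be reached by adding some $\mu_i$ to an element still in $\gamma+\Cc$). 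The main obstacle I anticipate is the last step of part (4): one must argue that $(M/R_+M)_{\gamma+\eta}=0$ from the vanishing of $(M/B_iM)_{\gamma+\eta'}$ at the ``previous'' degrees $\gamma+\eta'=\gamma+\eta-\mu_i$, which requires that surjectivity of multiplication $R_{\mu_i}\otimes M_{\gamma+\eta-\mu_i}\to (M/\sum_{j\neq i}B_jM)_{\gamma+\eta}$ can be deduced from part (3)-type equalities; handling the interaction of the several $B_i$ simultaneously (rather than one at a time) is the delicate bookkeeping, and I would reduce it to iterating the one-generator statement along a chain $\gamma\to\gamma+\mu_{i_1}\to\cdots\to\gamma+\eta$ and invoking weak regularity at each intermediate degree, which is available precisely because $M$ is $\gamma$-regular.
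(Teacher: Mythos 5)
Your parts (3) and (4) are structurally close to the paper's, but parts (1) and (2) contain genuine gaps. For (1), iterating $\mu_i$-persistence one generator at a time does not prove $\gamma$-persistence for a composite $\gamma=\sum_{i\in E}\lambda_i\mu_i$: by Definition \ref{pers}, $\gamma$-persistence concerns the set $\bigcup_{k>0}\Supp_\G(H^k_B(M))+(k-1)\gamma$, while $\mu_i$-persistence concerns $\bigcup_{k>0}\Supp_\G(H^k_B(M))+(k-1)\mu_i$; these sets differ for $k>1$, so avoiding the $\gamma$-set does not put you in the hypothesis of the $\mu_i$-statement, and the chain $\eta\to\eta+\mu_{i_1}\to\cdots$ you envisage never closes. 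The paper avoids this entirely by verifying the hypothesis of Lemma \ref{pers1dir-2} \emph{once} for the composite $\gamma$: from $B\subset\sqrt{B_i+\ann_R(M)}$ for $i\in E$ one gets $B\subseteq\sqrt{\prod_{i\in E}B_i^{\lambda_i}+\ann_R(M)}\subseteq\sqrt{(R_\gamma)+\ann_R(M)}$, and Lemma \ref{pers1dir-2} applies directly.

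For (2), your ``bridge'' is stated as if weak $\gamma$-regularity at level $\ell$ implied $\gamma\notin\bigcup_{k>0}\Supp_\G(H^k_B(M))+(k-1)\mu_i$, but that only follows for $\ell\le 1$: the level-$\ell$ hypothesis says nothing about $H^k_B(M)$ for $0<k<\ell$, whereas Lemma \ref{pers1dir-2} needs all $k>0$. What is actually needed --- and what the paper supplies --- is a decomposition of the $\FF$-sets that isolates the direction $\mu_p$: writing $\FF_{i-1}=\bigcup_{j}\FF^p_j+\FF^{(p)}_{i-1-j}$, one fixes the $\FF^{(p)}$-contribution and applies Lemma \ref{wR} (with $I=B_p$, using $\ell>\cd_B(R/(B_p+\ann_R(M)))$) to propagate in the pure $\mu_p$-direction; this translation step between $\FF$-shifts and single-direction shifts is the heart of the proof and is absent from your sketch. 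Finally, in (4) your appeal to ``$H^0_B(M)$ is $B_i$-torsion-like'' is not the right mechanism: the vanishing $(H^0_B(M)/B_iH^0_B(M))_{\gamma+\eta}=0$ follows simply because $\gamma$-regularity includes level $0$, so $\gamma'+\gamma_j\notin\Supp_\G(H^0_B(M))$ for all $\gamma'\in\gamma+\Cc$ and all $j$, hence $H^0_B(M)_{\gamma+\eta}=0$ for $0\neq\eta\in\Cc$; combined with your (correct) use of (3) this gives $(M/B_iM)_{\gamma+\eta}=0$, and since $M/R_+M$ is a quotient of $M/B_iM$ the conclusion follows with no need for the ``assembly over all $i$'' or the multi-$B_j$ bookkeeping you describe.
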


\begin{proof} For (1) let $\gamma :=\sum_{i\in E} \lambda_i \mu_i$. Restricting $E$ if needed, we may
assume that $\lambda_i \not= 0$ for all $i$. Then one has 
$$
B\subseteq \bigcap_{i\in E} \sqrt{B_i+\ann_R(M)}\subseteq  \sqrt{(\prod_{i\in E}(B_i)^{\lambda_i})+\ann_R(M)}\subseteq  \sqrt{(R_\gamma )+\ann_R(M)}
$$
and the assertion follows from Lemma \ref{pers1dir-2}.

We next prove (2). For $1\leq p\leq m$ let  $\FF_0^p=\FF_{0}^{(p)}=\{ 0\}$, $\FF_i^p :=\{ i\mu_p\}$,
\[
\FF_{i}^{(p)}:=\{ \gamma_{j_1}+\cdots + \gamma_{j_i}\ \vert\ j_1\leq \cdots \leq j_i\ \hbox{and}\  \gamma_{j_l}\not= \mu_p,\ \forall l\}.
\]
Applying Lemma \ref{wR} with $I:=B_p$, one gets
\[
\begin{array}{rl}
\gamma \not\in \underset{i\geq 0}{\bigcup} \Supp_\G(H^{\ell +i}_B(M))+\FF^p_{i}\ &\Leftrightarrow  
\gamma +\mu_p \not\in  \underset{i\geq 0}{\bigcup}\underset{j\geq 0}{\bigcup} (\Supp_\G(H^{(\ell +i)+j}_B(M))+\FF^p_{j+1})+\FF^p_i \ \\
&\Rightarrow  \gamma+\mu_p \not\in 
\underset{i\geq 0}{\bigcup} \Supp_\G(H^{\ell +i}_B(M))+\FF^p_{i} .\\
\end{array}
\]

For any $p$ one can write
\[
\begin{array}{rl}
\bigcup_{i\geq \ell} \Supp_\G(H^i_B(M))+\FF_{i-1}
&=\bigcup_{j\geq \ell}\bigcup_{i\geq 0}( \Supp_\G(H^{j+i}_B(M))+\FF_{i}^p)+\FF^{(p)}_{j-1}\\
\end{array}
\]

which shows that $\gamma \not\in \bigcup_{i\geq 0} \Supp_\G(H^{\ell +i}_B(M))+\FF_{i}\Rightarrow \gamma +\mu_p \not\in \bigcup_{i\geq 0} \Supp_\G(H^{\ell +i}_B(M))+\FF_{i}$ for any $p$ and concludes the proof of (2).

Statement (3) is the second part of Lemma \ref{pers1dir-2} and (4) directly follows from (3). 
\end{proof}

Next example illustrates Theorem \ref{wRtoR} in the standard multigraded case.

\begin{exmp}\label{wRtoRStMult}
Assume that $R=S[X_{ij}, 1\leq i\leq m, 0\leq j\leq r_i]$ is a finitely generated standard multigraded ring,
$B_i :=(X_{ij}, 0\leq j\leq r_i)$,
$B:=B_1 \cap\cdots \cap B_m$ and $R_+ :=B_1 +\cdots + B_m$.   Let  $M$ be a graded $R$-module. 

If $M$ is weakly $\gamma$-regular, then

\begin{enumerate}
\item $M/H^0_B(M)$ is $\gamma$-regular,

\item $(H^0_B(M)/R_+ H^0_B(M))_{\gamma'}=(M/R_+ M )_{\gamma'}$, for any $\gamma'\in \gamma +\Cc$.
\end{enumerate}
\end{exmp}

\subsection{From $B$-regularity to Betti numbers}

Next Theorem substantiate our results in Section $3$ on regularity. Together with the subsequent ones, they exhibit the importance of (weak) $\gamma$-regularity (at level $\ell$).

\begin{thm}\label{ThmLCtoTor1}
Let $M$ be a $\G$-graded $R$-module. Then
\[
 \Supp_\G(\tor^R_j(M,S)) \subset \EE_{j+1}+ \complement \reg_B (M)
\]
for all $j\not= n$, and $ \Supp_\G(\tor^R_n(M,S))\subseteq \Supp_\G(H^0_B (M))+\EE_n $.
\end{thm}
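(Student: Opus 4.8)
The plan is to combine the main result of Section 3 relating local cohomology to Tor, namely Corollary \ref{CorLCtoTor}, with the definition of $\reg_B(M)$ in Definition \ref{defRegLC}. Recall that Corollary \ref{CorLCtoTor} gives, for each $j$,
\[
 \Supp_\G(\tor^R_j(M,S))\subset \bigcup_{k\geq 0}\bigl(\Supp_\G(H^{k}_B(M))+\EE^X_{j+k}\bigr),
\]
where $\EE^X_{j+k}=\EE_{j+k}$ in the notation of Definition \ref{defEE}, since all the $f_i$ are the variables $X_i$. So a degree $\gamma\in\Supp_\G(\tor^R_j(M,S))$ can be written as $\gamma=\mu+\delta$ with $\mu\in\Supp_\G(H^{k}_B(M))$ and $\delta\in\EE_{j+k}$ for some $k\geq 0$. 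The key combinatorial point is the telescoping identity $\EE_{j+k}\subseteq \EE_{j+1}+\EE_{k-1}$ for $k\geq 1$ (choosing a subset of $j+1$ indices and a disjoint subset of $k-1$ indices out of the $j+k$ indices), together with $\EE_{j}\subseteq\EE_{j+1}+\EE_{-1}$ — but here one must be more careful: for $k=0$ we have $\delta\in\EE_j$, and $\EE_j \subseteq \EE_{j+1}+\EE_{-1}$ only makes sense when $j+1\le n$, i.e. $j<n$. This is exactly why the statement excludes $j=n$ and treats it separately.

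First I would handle $j<n$. Given $\gamma=\mu+\delta$ as above, I rewrite $\delta = \delta' + \delta''$ with $\delta'\in\EE_{j+1}$ and $\delta''\in\EE_{k-1}$ (valid for $k\geq 1$; for $k=0$ instead write $\delta=\delta'+\delta''$ with $\delta'\in\EE_{j+1}$, $\delta''\in\EE_{-1}=-\EE_1$, possible since $j<n$ so $j+1\le n$). Then $\gamma - \delta' = \mu+\delta'' \in \Supp_\G(H^k_B(M))+\EE_{k-1}$, which means $\gamma-\delta'$ is not weakly $(\gamma-\delta')$-regular — more precisely $\gamma-\delta'\in\bigcup_{i\geq 0}\Supp_\G(H^i_B(M))+\EE_{i-1}$ with $i=k$ — hence $\gamma-\delta'\notin\reg_B(M)$ (in fact $\gamma-\delta'$ fails very weak $\gamma$-regularity, and a fortiori weak $\gamma$-regularity, so it is not in $\reg_B(M)$). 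Wait — I should use the $\EE$-version, i.e. very weak regularity, but $\reg_B(M)$ is defined via $\FF$ and weak regularity. Since $\EE_i\subseteq\FF_i$, being in $\Supp_\G(H^i_B(M))+\EE_{i-1}$ implies being in $\Supp_\G(H^i_B(M))+\FF_{i-1}$, so $\gamma-\delta'$ is not weakly regular, hence not in $\reg_B(M)$, i.e. $\gamma-\delta'\in\complement\reg_B(M)$. Therefore $\gamma = \delta' + (\gamma-\delta')\in\EE_{j+1}+\complement\reg_B(M)$, as desired.

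For $j=n$: now $\EE_{n+1}=\emptyset$, so the term $\EE_{j+1}=\EE_{n+1}$ contributes nothing, and indeed $\tor^R_n(M,S)$ should be controlled only by $H^0_B(M)$. The point is that $\EE_{n+k}=\emptyset$ for $k\geq 1$ as well (there are only $n$ variables), so in Corollary \ref{CorLCtoTor} only the $k=0$ term survives, giving exactly $\Supp_\G(\tor^R_n(M,S))\subseteq \Supp_\G(H^0_B(M))+\EE_n$. The main obstacle — really the only subtle point — is getting the bookkeeping on the index ranges right at the boundary $j=n$ and $k=0$, and checking that the decomposition $\EE_{j+k}\subseteq\EE_{j+1}+\EE_{k-1}$ is legitimate when $k=0$ only under the hypothesis $j\neq n$; the rest is a direct unwinding of definitions. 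I would also remark that the finitely-generated hypothesis is not needed here since Corollary \ref{CorLCtoTor} holds with $S$ an arbitrary commutative ring.
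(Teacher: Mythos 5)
Your proof is correct and follows essentially the same route as the paper's: both start from Corollary \ref{CorLCtoTor} (equivalently Theorem \ref{ThmLCtoTor} with $f=X$), split off a sub-sum of $j+1$ distinct $\gamma_i$'s into $\EE_{j+1}$, and observe that what remains lands in $\Supp_\G(H^k_B(M))+\EE_{k-1}\subseteq\Supp_\G(H^k_B(M))+\FF_{k-1}$, hence in $\complement\reg_B(M)$; your packaging of the case analysis via the identity $\EE_{j+k}\subseteq\EE_{j+1}+\EE_{k-1}$ (with the caveat at $k=0$, $j<n$) is just a cleaner way of writing the paper's choice $t_k:=i_{j+k+1}$ and the special pick of an extra index $t\notin\{i_1,\ldots,i_j\}$ when $\ell=0$. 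Your treatment of $j=n$ via $\EE_{n+k}=\emptyset$ for $k\geq 1$ also matches the paper's tacit "which gives the conclusion unless $j=n$".
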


In terms of regularity, the case $j=n$ gives 
\[
\Supp_\G(\tor^R_n(M,S)) \subset \complement (\reg_B (M)+\EE_n+\EE_1 ).
\]

When $\G =\ZZ$ and the grading is standard, this reads, with the usual definition of $\reg (M)\in \ZZ$: 
\[
\reg (M)+j\geq \fin (\tor^R_j (M,S))
\]
for all $j\geq 0$. 

\begin{proof}
 If $\gamma \in \Supp_\G(\tor^R_j(M,S))$, then it follows from Theorem \ref{ThmLCtoTor} that $\gamma \in \Supp_\G(H^{\ell}_B(M))+\EE_{j+\ell}$ for some 
 $\ell$. Hence 
 $$
 \gamma -\gamma_{i_1}-\cdots -\gamma_{i_{j+\ell}} \in \Supp_\G(H^\ell_B(M))
 $$
 for some $i_1<\cdots <i_{j+\ell}$. 
 If $\ell =1$ it shows that $\mu \not\in \reg_B (M)$. If $\ell>1$, by definition, it follows that if $\mu\in \reg_B (M)$ and $t_1< \cdots < t_{\ell -1}$, then 
 $$
  \gamma -\gamma_{i_1}-\cdots -\gamma_{i_{j+\ell}} \not= \mu -\gamma_{t_1}-\cdots -\gamma_{t_{\ell -1}}
  $$
  in particular choosing $t_k:=i_{j+k+1}$ for $k>0$ one has 
   $$
  \gamma -\gamma_{i_1}-\cdots -\gamma_{i_{j+1}} \not\in \reg_B (M). 
  $$
   If $\ell =0$, by definition, it follows that if $\mu\in \reg_B (M)$
 $$
  \gamma -\gamma_{i_1}-\cdots -\gamma_{i_{j}} \not= \mu +\gamma_{t}
  $$
  for all $t$, which gives the conclusion unless $j=n$. 
  \end{proof}

The definition chosen for regularity is well fitted to the case where one has persistency 
with respect to any of the $\gamma_i$'s. Let $\Cc^*:=\Cc \setminus \{ 0\}$. We have already seen that in this case, $M$ 
is generated by elements whose degrees are not in $\mu +\Cc^*$. In particular, $M_{\mu +\Cc}$ has 
regularity $\mu +\Cc$ and is generated in degree $\mu$, for any $\mu\in \reg_B (M)$. The following result
shows that persistence in one direction improves quite much the regularity control given
by Theorem \ref{ThmLCtoTor1}.

A more precise version of Corollary \ref{CorLCtoTor} will be useful:

\begin{lem}\label{CorLCtoTor2}
Let $E=\{ i_1,\ldots ,i_t\} \subseteq \{ 1,\ldots ,m\}$ and
$$
\EE_{k}^{E}:=\{\gamma_{j_1}+\cdots+\gamma_{i_k}\ : \ i_1<\cdots<i_k,\ \gamma_{j_i}\in \{ \mu_{i_1},\ldots ,\mu_{i_t}\}, \forall i\} .
$$ 
If $B\subset  \sqrt{B_{i_1}+\cdots +B_{i_t}+\ann_R(M)}$, then for any integer $j$,
\[
\Supp_\G(\tor^R_j(M,S))\subset \bigcup_{k\geq 0}(\Supp_\G(H^{k}_B(M))+\EE_{k}^{E})+\EE_j.
\]
\end{lem}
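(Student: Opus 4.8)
The plan is to mimic the proof of Theorem \ref{ThmLCtoTor1} but keep track of the finer grading information, using the fact that under the hypothesis $B\subseteq \sqrt{B_{i_1}+\cdots+B_{i_t}+\ann_R(M)}$ we may choose a Koszul complex built only from the variables lying in the subgroup generated by $\mu_{i_1},\ldots,\mu_{i_t}$. First I would set $J:=B_{i_1}+\cdots+B_{i_t}$ and let $f=(f_1,\ldots,f_r)$ be the tuple consisting of the variables $X_k$ with $\deg(X_k)\in\{\mu_{i_1},\ldots,\mu_{i_t}\}$; then $I:=(f)=J$ and $B\subseteq\sqrt{I+\ann_R(M)}$, so the hypothesis of Theorem \ref{ThmLCtoTor} is satisfied with this particular $f$. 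With the notation of that theorem, the relevant $\EEE^f_k$ is exactly $\EE^E_k$, since the degrees $\d_j$ of the chosen $f_j$'s range over $\{\mu_{i_1},\ldots,\mu_{i_t}\}$. Hence Theorem \ref{ThmLCtoTor} gives
\[
\Supp_\G(\HHH_j(f;M))\subset \bigcup_{k\geq 0}(\Supp_\G(H^k_B(M))+\EE^E_{j+k})
\]
for all $j\geq 0$.

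Next I would relate the Koszul homology $\HHH_j(f;M)$ to $\tor^R_j(M,S)$. The sequence $f$ is a subsequence of all the variables; completing it to the full Koszul complex $\k.(X_1,\ldots,X_n;M)$, whose homology computes $\tor^R_\bullet(M,S)$, one has the standard décalage/iterated mapping cone relation: $\k.(X_1,\ldots,X_n;M)\simeq \k.(f;M)\otimes_R \k.(X_1,\ldots,X_n\setminus f;R)$, and the latter complex is the Koszul complex on the remaining variables, whose degrees all lie in $\{\mu_1,\ldots,\mu_m\}$. Taking homology via the Künneth/base-change spectral sequence — or simply the iterated mapping cone exact sequences — gives
\[
\Supp_\G(\tor^R_j(M,S))\subset \bigcup_{a+b=j}\bigl(\Supp_\G(\HHH_a(f;M))+\EE_b\bigr)\subseteq \bigcup_{a\geq 0}\bigl(\Supp_\G(\HHH_a(f;M))+\EE_{j-a}\bigr),
\]
using $\EE_b\subseteq\EE_j$-type containments only as needed; more precisely the complementary Koszul factor contributes shifts in $\EE_b$ for $b\le j$, and combining with the previous display yields
\[
\Supp_\G(\tor^R_j(M,S))\subset \bigcup_{k\geq 0}(\Supp_\G(H^k_B(M))+\EE^E_k)+\EE_j,
\]
which is the claim.

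The main obstacle, and the point I would be most careful about, is the bookkeeping in passing from Koszul homology on the subtuple $f$ to $\tor^R_j(M,S)$: one must verify that the extra shifts introduced by the complementary variables are captured by a single $+\EE_j$ (not $+\EE_{\le j}$ summed), and that the indices match so that a class contributing to $\tor_j$ from $\HHH_a(f;M)$ with a shift from $b=j-a$ complementary variables indeed lands in $\Supp_\G(H^k_B(M))+\EE^E_{a+k}+\EE_{j-a}$, which is contained in $\bigcup_{k}(\Supp_\G(H^k_B(M))+\EE^E_k)+\EE_j$ since $\EE^E_{a+k}\subseteq \EE^E_k+\EE^E_a\subseteq \EE^E_k+\EE_a$ and $\EE_a+\EE_{j-a}\subseteq\EE_j$ (here using that the $f_j$ are distinct variables and the complementary ones are distinct from them, so sums of distinct indices behave additively). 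Once this additivity of the index sets is checked, the rest is a direct application of Theorem \ref{ThmLCtoTor} together with Lemma \ref{RemSuppCond}, exactly as in Corollary \ref{CorLCtoTor}.
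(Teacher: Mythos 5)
Your proposal is correct and follows essentially the same route as the paper's proof: the tensor--product decomposition $\k.(X;M)\simeq\k.(Y_E;M)\otimes_R\k.(Y_{E'};R)$ furnishes one spectral sequence, and the \v Cech--Koszul double complex on $\k.(Y_E;M)$ (which is exactly what Theorem \ref{ThmLCtoTor} packages) furnishes the other; you simply apply them in the opposite order. Your last paragraph correctly flags and resolves the one point that requires care, namely that the containment $\EE_a+\EE_{j-a}\subseteq\EE_j$ fails in general but holds in the form $\EE^E_a+\EE^{E'}_{j-a}\subseteq\EE_j$ because the two index sets are disjoint.
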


\begin{proof}
Let $Y_E$ denote the tuple of variables whose degrees are in  $ \{ \mu_{i_1},\ldots ,\mu_{i_t}\}$, with the order induced by the one of the variables,
and $E':=\{ 1,\ldots ,m\}\setminus E$. Set $T_j^E:=\Supp_G ( \tor^R_{j}(M,R/(Y_E)))$ and $\SS^k_B :=\Supp_G (H^k_B (M))$. 
The double complex with components $\kos_p (Y_E;M)\otimes_R \kos_q (Y_{E'};M)$ whose 
totalization is isomorphic to $\k. (X;M)$ gives rise to a spectral sequence 
$$
E^1_{p,q}=\kos_p (Y_{E'}; \tor_q^R (M,R/(Y_E))) \Rightarrow \tor^R_{p+q}(M,S).
$$ 
which implies that $\Supp_\G(\tor^R_j(M,S))\subset \cup_{p+q=j}(T_p^E+\EE^{E'}_q )$.
On the other hand, the spectral sequence:
$$
_1'E^i_j = \kos_j (Y_E ; H^i_B (M)) \Rightarrow \tor^R_{j-i}(M,R/(Y_E )),
$$
implies that $T^E_p\subseteq \cup_{k\geq 0} (\SS_B^k +\EE_{p+k}^{E})$.
It follows that, 
$$
\Supp_G ( \tor^R_{j}(M,S))\subseteq \bigcup_{p+q=j}\left( \bigcup_{k\geq 0}\SS^k_B +\EE_{p+k}^{E} \right) +\EE_q^{E'} = \bigcup_{k\geq 0}(\SS_B^k +\EE_{k}^{E})+\EE_j.
$$
 \end{proof}
 
 An application of the above result gives the following interesting case:

 \begin{thm}\label{ThmLCtoTor2}
Let $M$ be a $\G$-graded $R$-module. If $B\subset  \sqrt{B_i+\ann_R(M)}$, then
\[
 \Supp_\G(\tor^R_j(M,S)) \subseteq \gamma_i +\EE_j+\complement \reg_B (M)
\]
for all $j$.
\end{thm}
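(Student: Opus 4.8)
The statement to prove is Theorem \ref{ThmLCtoTor2}: if $B\subset \sqrt{B_i+\ann_R(M)}$ then $\Supp_\G(\tor^R_j(M,S)) \subseteq \gamma_i +\EE_j+\complement \reg_B (M)$ for all $j$. The natural approach is to apply Lemma \ref{CorLCtoTor2} in the special case $E=\{i\}$ (so that $B_{i_1}+\cdots+B_{i_t}=B_i$, i.e. $Y_E$ is the tuple of all variables of degree $\mu_i$, and the hypothesis of that lemma becomes exactly $B\subset\sqrt{B_i+\ann_R(M)}$), and then to repackage the resulting bound $\Supp_\G(\tor^R_j(M,S))\subset \bigcup_{k\geq 0}(\Supp_\G(H^{k}_B(M))+\EE_{k}^{E})+\EE_j$ in terms of the complement of $\reg_B(M)$.

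First I would spell out what $\EE_k^E$ is when $E=\{i\}$: it is $\{k\gamma_i\}$ when nonempty (all summands must have degree $\mu_i$ and there are $k$ of them), and we should check for which $k$ it is nonempty — it is $\{0\}$ for $k=0$ and $\{k\gamma_i\}$ for $1\le k\le r$ where $r$ is the number of variables of degree $\mu_i$, empty otherwise; but since $H^k_B(M)$ with $\EE^E_k=\emptyset$ contributes nothing, we may as well write the bound as $\Supp_\G(\tor^R_j(M,S))\subset \bigcup_{k\geq 0}(\Supp_\G(H^{k}_B(M))+k\gamma_i)+\EE_j$. So the real content is: for each $k\ge0$, $\Supp_\G(H^{k}_B(M))+k\gamma_i \subseteq \gamma_i+\complement\reg_B(M)$, after which adding $\EE_j$ to both sides and taking the union over $k$ finishes the proof.

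The heart of the argument — and the step I expect to be the main obstacle — is establishing $\Supp_\G(H^{k}_B(M))+k\gamma_i \subseteq \gamma_i+\complement\reg_B(M)$ for every $k\ge 0$, i.e. that if $H^k_B(M)_\mu\neq0$ then $\mu+(k-1)\gamma_i\notin\reg_B(M)$. For $k=0$ this says $\Supp_\G(H^0_B(M))-\gamma_i\subseteq\complement\reg_B(M)$, which I would get from persistence: by Lemma \ref{pers1dir-2} (available since $B\subseteq\sqrt{(R_{\gamma_i})+\ann_R(M)}$, using $\gamma_i=\mu_i$), if $\mu-\gamma_i\in\reg_B(M)$ then $B$-regularity is $\gamma_i$-persistent at $\mu-\gamma_i$, forcing weak $\gamma_i$-regularity to propagate and in particular $H^0_B(M)_\mu$ cannot break the weak-regularity condition at $\mu$ — more carefully, $\mu-\gamma_i\in\reg_B(M)$ means $\mu-\gamma_i+\Cc$ avoids all the shifted supports, and persistence in the $\gamma_i$ direction together with the definition of $\reg_B$ should rule out $H^0_B(M)_\mu\neq0$. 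For $k\geq 1$, if $\mu\in\Supp_\G(H^k_B(M))$ then by definition any $\nu$ with $\nu+\EE_{k-1}\ni\mu$, in particular $\nu=\mu-(k-1)\gamma_i$ provided $k-1$ variables of degree $\gamma_i$ are available — but here one must be careful, since $\EE_{k-1}$ uses distinct indices, so $(k-1)\gamma_i\in\EE_{k-1}$ only when there are at least $k-1$ variables of that degree. The cleaner route, which is what the $\EE^E_k$ formalism is designed for, is: the persistence Theorem \ref{wRtoR}(1) (or directly Corollary \ref{pers1dir}) shows that $H^k_B(M)_\mu\neq0$ with $\mu+(k-1)\gamma_i\in\reg_B(M)$ would contradict, step by step along the $\gamma_i$-line, the weak regularity encoded in $\reg_B(M)$; so I would invoke $\gamma_i$-persistence to translate "$H^k_B(M)_\mu\ne0$" into "$\mu+(k-1)\gamma_i$ fails weak regularity", hence lies in $\complement\reg_B(M)$.

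To assemble: from Lemma \ref{CorLCtoTor2} with $E=\{i\}$, $\Supp_\G(\tor^R_j(M,S))\subset\bigcup_{k\ge0}(\Supp_\G(H^k_B(M))+\EE^{\{i\}}_k)+\EE_j\subseteq\bigcup_{k\ge0}(\gamma_i+\complement\reg_B(M))+\EE_j=\gamma_i+\EE_j+\complement\reg_B(M)$, which is the claim. The only genuinely delicate point is the inclusion $\Supp_\G(H^k_B(M))+\EE^{\{i\}}_k\subseteq\gamma_i+\complement\reg_B(M)$; I would prove it by a short induction on $k$ using $\gamma_i$-persistence (Lemma \ref{pers1dir-2}), the base case $k=0$ and $k=1$ being essentially the definition of weak $\gamma$-regularity combined with the fact that $\reg_B(M)$ is $\Cc$-stable, and the inductive step peeling off one factor of $\gamma_i$ at a time, exactly mirroring the proof of Theorem \ref{wRtoR}(2).
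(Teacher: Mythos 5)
Your overall structure matches the paper's: apply Lemma~\ref{CorLCtoTor2} with $E=\{i\}$, so that the hypothesis becomes exactly $B\subset\sqrt{B_i+\ann_R(M)}$, and then repackage the resulting bound $\bigcup_k\bigl(\Supp_\G(H^k_B(M))+\EE^{\{i\}}_k\bigr)+\EE_j$ inside $\gamma_i+\EE_j+\complement\reg_B(M)$. Both the identification of the right lemma and the identification of the key remaining inclusion, namely $\Supp_\G(H^k_B(M))+k\gamma_i\subseteq\gamma_i+\complement\reg_B(M)$, are correct and are exactly what the paper does (in the paper this is phrased as $\bigcup_k\SS^k_B+k\gamma_i\subseteq\CCC+\gamma_i$ with $\CCC:=\bigcup_k(\SS^k_B+\EE_{k-1})$, followed by $\CCC\cap\reg_B(M)=\emptyset$).

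Where you depart from the paper, and where you overcomplicate, is in how you close this inclusion. You flag two worries and propose persistence (Lemma~\ref{pers1dir-2} / Theorem~\ref{wRtoR}(1)) to handle both, but neither worry is real and persistence is not needed. First, the definition of weak $\gamma$-regularity in Definition~\ref{defRegLC} uses $\FF_{i-1}$, not $\EE_{i-1}$, and $\FF_{k-1}$ allows repeated indices, so $(k-1)\gamma_i\in\FF_{k-1}$ for every $k\geq1$ regardless of how many variables have degree $\gamma_i$; your concern about ``$(k-1)\gamma_i\in\EE_{k-1}$ only when there are at least $k-1$ variables of that degree'' is therefore moot. (It would also be moot even working with $\EE$, since $\EE^{\{i\}}_k\neq\emptyset$ already forces at least $k$ such variables.) Second, the inclusion is a one-line consequence of the definitions: if $\mu\in\Supp_\G(H^k_B(M))$ and $k\geq1$, then $\mu+(k-1)\gamma_i\in\Supp_\G(H^k_B(M))+\FF_{k-1}$, so $M$ is not weakly $(\mu+(k-1)\gamma_i)$-regular, i.e.\ $\mu+(k-1)\gamma_i\notin\reg_B(M)$; and if $k=0$, then $\mu-\gamma_i\in\Supp_\G(H^0_B(M))+\FF_{-1}$, giving $\mu-\gamma_i\notin\reg_B(M)$. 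This is precisely what the paper compresses into ``$\reg_B(M)+\gamma_i+\Cc\cap\CCC+\gamma_i=\emptyset$'', using $\EE_{k-1}\subseteq\FF_{k-1}$ and the $\Cc$-stability of $\reg_B(M)$. So your proposal is correct in substance and takes the same route as the paper; the persistence machinery you invoke is a detour that can be excised entirely, and you should be careful that the relevant sets for the definition of regularity are the $\FF_i$ (multisets of degrees), not the $\EE_i$.
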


\begin{proof}
Let $\CCC:=\bigcup_{k}(\SS^k_B +\EE_{k-1})$ and notice that $\SS^0_B+\EE_j \subseteq\SS^0_B+\EE_{-1}+\EE_j +\gamma_i\subseteq \CCC +\gamma_i$. Lemma \ref{CorLCtoTor2} applied to $E= \{ i\}$ implies that 
$$
\begin{array}{rl}
\Supp_G ( \tor^R_{j}(M,S))&\subseteq \left( \bigcup_{k\geq 0}\SS^k_B +k\gamma_i \right) +\EE_j
\subseteq \CCC +\EE_j +\gamma_i\\
\end{array}
$$

By definition, $\reg_B (M)+\gamma_i +\Cc \cap \CCC +\gamma_i =\emptyset$. The claim follows.
 \end{proof}

The following corollary contains the estimates for the degrees of generators of \cite[Theorem 1.3]{MlS04} (case $j=0$)
and extend it to higher syzygies ($j>0$):
\begin{cor}
Let $M$ be a $\G$-graded $R$-module. If $B\subset  \sqrt{B_i+\ann_R(M)}$ for every $i$, then
\[
 \Supp_\G(\tor^R_j(M,S)) \subseteq\bigcap_i (\gamma_i +\EE_j+\complement \reg_B (M))
\]
for any $j$. 
\end{cor}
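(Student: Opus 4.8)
The plan is to deduce this corollary directly from Theorem \ref{ThmLCtoTor2} by intersecting over the index $i$. Indeed, the corollary merely asserts that the hypothesis $B\subset \sqrt{B_i+\ann_R(M)}$ holds \emph{for every} $i\in\{1,\dots,m\}$ (equivalently, for every distinct degree $\mu_i$ of a variable), so one may invoke Theorem \ref{ThmLCtoTor2} once for each such $i$.

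\begin{proof}
Fix $j$. For each index $i$, the hypothesis gives $B\subset \sqrt{B_i+\ann_R(M)}$, so Theorem \ref{ThmLCtoTor2} applies and yields
\[
\Supp_\G(\tor^R_j(M,S)) \subseteq \gamma_i +\EE_j+\complement \reg_B (M).
\]
Since this inclusion holds for every $i$, intersecting over all $i$ gives
\[
\Supp_\G(\tor^R_j(M,S)) \subseteq \bigcap_i \bigl(\gamma_i +\EE_j+\complement \reg_B (M)\bigr),
\]
as claimed. The statement about generators of a minimal free resolution (the case $j=0$, recovering \cite[Theorem 1.3]{MlS04}) follows since, when $S$ is a field, $\Supp_\G(\tor^R_0(M,S))$ is exactly the set of degrees of a minimal generating set of $M$ by Lemma \ref{LemResolutions}(1), and the higher $\tor^R_j$ record the shifts of $F_j$ in a minimal resolution.
\end{proof}

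There is essentially no obstacle here: the content is entirely in Theorem \ref{ThmLCtoTor2} (which in turn rests on Lemma \ref{CorLCtoTor2} and the spectral sequence arguments there), and the corollary is a one-line intersection. The only point worth a moment's care is bookkeeping on the index set: the degrees $\gamma_1,\dots,\gamma_n$ of the variables need not be distinct, and $B_i=(R_{\mu_i})$ is indexed by the distinct values $\mu_1,\dots,\mu_m$; one should make sure that ``for every $i$'' in the hypothesis is read as ranging over these distinct classes, so that each application of Theorem \ref{ThmLCtoTor2} is legitimate, and then the intersection over $i$ on the right-hand side is over the same index set. With that understood, the proof is immediate.
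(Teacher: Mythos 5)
Your proof is correct and is exactly the intended argument: the corollary is an immediate consequence of Theorem \ref{ThmLCtoTor2} applied once for each $i$ and then intersected, which is why the paper states it without a separate proof. Your remark about indexing by the distinct degrees $\mu_1,\dots,\mu_m$ (so that $B_i=(R_{\mu_i})$ and the shift on the right-hand side should be read as $\mu_i$) is the right bookkeeping and matches the conventions set up just before Theorem \ref{wRtoR}.
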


\begin{rem}\label{remSuppFiniteR+}
Notice that if $R$ is a graded polynomial ring such that for any $\gamma\in G$, $(\gamma+\Cc)\cap (-\Cc)$ is a finite subset of $G$, then $(\gamma_i +\EE_j+\complement \reg_{R_+} (R))\cap \Cc$ is a finite set for any $i$ and $j$. Hence, if $R$ is Noetherian and $M$ is a finitely generated $R$-module, $(\gamma_i +\EE_j+\complement \reg_{R_+} (M))\cap \Supp_G(M)$ is a finite set for any $i$ and $j$. In particular, applying Theorem \ref{ThmLCtoTor2} for $B=R_+$ one gets that $\Supp_\G(\tor^R_j(M,S))$ is contained in
an explicit finite set, written in terms of regularity, for any $j$.

The finiteness of $(\gamma+\Cc)\cap (-\Cc)$ holds for instance when $\Cc$ is the monoid spanned by the degrees of the variables of the Cox ring of any product of anisotropic projective spaces.
\end{rem}

The following example evidence the relation between regularity and vanishing of Betti numbers in the probably most common situation. This result generalizes the fact that when $\G =\ZZ$ and the grading is standard, $\reg (M)+j\geq \fin (\tor^R_j (M,S))$. 

\begin{exmp}\label{exmpSuppTor}
Assume $(S,\mm,k)$ is local Noetherian, $B\subset B_i$ and let $F_\bullet$ be a minimal free $R$-resolution of a finitely generated $R$-module $M$. Then, by Theorem \ref{ThmLCtoTor2} and Lemma \ref{LemResolutions}(3)

\[
 \Supp_G(F_j \otimes_R k)\subset \bigcup_{0\leq j'\leq j}\gamma_i +\EE_{j'}+\complement \reg_B (M)=\gamma_i +\EE_{j}+\complement \reg_B (M).
\]
Also notice that
$
 \Supp_G(M \otimes_R S)\subset \gamma_i + \complement (\reg_B (M)).
$

In the case $\G=\ZZ$ and $\gamma_i=1$ for all $i$, this gives the classical inequality
\[
 \fin(F_j \otimes_R S)\leq j+1+(\reg (M)-1)=j+\reg(M)
\]
for all $j$.
\end{exmp}

\subsection{From Betti numbers to $B$-regularity}

First Corollary \ref{corSuppHi3} shows that:

\begin{prop}\label{PrTortoLC}
Assume $S$ is Noetherian, let $M$ be a finitely generated $\G$-graded $R$-module and set $T_i:=\Supp_{\G}(\tor_i^R(M,S))$. Then, for any $\ell$,
\[
 \Supp_{\G}(H^\ell_B(M)+\EE_{\ell-1} )\subset \bigcup_{i\geq j}(\Supp_{\G}(H^{\ell +i}_B(R))+\EE_{\ell-1} +T_j).
 \]
 If further $S$ is a field,
 \[
 \Supp_{\G}(H^\ell_B(M)+\EE_{\ell-1})\subset \bigcup_{i}(\Supp_{\G}(H^{\ell +i}_B(R))+\EE_{\ell-1} +T_i).
 \]
 \end{prop}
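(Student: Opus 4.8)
The plan is to deduce Proposition~\ref{PrTortoLC} directly from Corollary~\ref{corSuppHi3} (and from Corollary~\ref{corSuppHi} when $S$ is a field), simply by adding the set $\EE_{\ell-1}$ to both sides of the inclusions provided there. First I would recall that Corollary~\ref{corSuppHi3} asserts, for a finitely generated graded $R$-module $M$ over a Noetherian base $S$ and any $\ell$,
\[
\Supp_{\G}(H^\ell_B(M))\subset \bigcup_{i\geq j\geq 0}\bigl(\Supp_{\G}(H^{\ell+i}_B(R))+\Supp_{\G}(\tor_j^R(M,S))\bigr)=\bigcup_{i\geq j}(\Supp_{\G}(H^{\ell+i}_B(R))+T_j),
\]
and that Corollary~\ref{corSuppHi} gives, when $S$ is a field, the analogous statement with the indexing $\bigcup_i(\Supp_{\G}(H^{\ell+i}_B(R))+T_i)$. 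The only remaining manipulation is the elementary observation that for any two subsets $A,A'\subseteq\G$ with $A\subseteq A'$ and any subset $D\subseteq\G$ one has $A+D\subseteq A'+D$, applied with $D=\EE_{\ell-1}$. Writing $\Supp_{\G}(H^\ell_B(M))+\EE_{\ell-1}$ on the left and distributing $\EE_{\ell-1}$ into the union on the right (using that Minkowski sum distributes over unions) yields exactly the two displayed inclusions of the proposition.

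The only mildly delicate point is purely notational: the statement writes ``$\Supp_{\G}(H^\ell_B(M)+\EE_{\ell-1})$'', which must be read as $\Supp_{\G}(H^\ell_B(M))+\EE_{\ell-1}$ — i.e.\ the translate of the support set by the finite set $\EE_{\ell-1}$, not the support of some module $H^\ell_B(M)+\EE_{\ell-1}$. I would make this reading explicit at the start of the proof to avoid ambiguity, and note that $\EE_{\ell-1}=\emptyset$ (so both sides are empty) whenever $\ell-1<-1$, while for $\ell=0$ one has $\EE_{-1}=-\EE_1$ and for $\ell\geq 1$ the set $\EE_{\ell-1}$ is the finite set of sums of $\ell-1$ distinct $\gamma_i$'s; in every case $\EE_{\ell-1}$ is a fixed subset of $\G$ that can be added to both sides.

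Concretely, the proof is two lines: by Corollary~\ref{corSuppHi3}, $\Supp_{\G}(H^\ell_B(M))\subset\bigcup_{i\geq j}(\Supp_{\G}(H^{\ell+i}_B(R))+T_j)$; adding $\EE_{\ell-1}$ to both sides and using that $(\bigcup_\alpha A_\alpha)+D=\bigcup_\alpha(A_\alpha+D)$ gives
\[
\Supp_{\G}(H^\ell_B(M))+\EE_{\ell-1}\subset\bigcup_{i\geq j}\bigl(\Supp_{\G}(H^{\ell+i}_B(R))+\EE_{\ell-1}+T_j\bigr),
\]
which is the first assertion. When $S$ is a field one starts instead from Corollary~\ref{corSuppHi} and repeats the same step to obtain the second assertion. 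There is essentially no obstacle here: the content of the proposition is entirely carried by Corollaries~\ref{corSuppHi} and~\ref{corSuppHi3}, and this statement is just their reformulation in a shape convenient for the regularity discussion that follows, so the ``hard part'' — bounding $\Supp_{\G}(H^\ell_B(M))$ by shifts in a resolution — has already been done in Section~3 via Theorem~\ref{lemSuppHi} and Lemma~\ref{LemResolutions}.
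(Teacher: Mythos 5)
Your proof is correct and matches the paper's own derivation: the proposition is stated in the paper immediately after the phrase ``First Corollary~\ref{corSuppHi3} shows that,'' so the intended argument is precisely the one you give, namely translating the inclusion of Corollary~\ref{corSuppHi3} (and of Corollary~\ref{corSuppHi} in the field case) by the fixed set $\EE_{\ell-1}$. Your remark about reading $\Supp_{\G}(H^\ell_B(M)+\EE_{\ell-1})$ as $\Supp_{\G}(H^\ell_B(M))+\EE_{\ell-1}$ is also the correct interpretation of the paper's slight abuse of notation.
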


Proposition \ref{PrTortoLC} was stated requesting $S$ to be Noetherian and $M$ be a finitely generated, for the sake of simplicity. These hypotheses can omitted in the case there exists a function $f:G\to\RR$ such that $f(\gamma_i)>0$ for all $i$, and $f(\deg(\alpha))$ is bounded below for $0\neq \alpha\in M$, as in the case (2) of Lemma \ref{LemResolutions} (cf.\ \cite{CJR}). This generalization include many rings from algebraic geometry, like projective toric schemes over an arbitrary base ring.

Proposition \ref{PrTortoLC} gives the following result: 
\begin{thm}\label{ThmTortoLC}
Assume $S$ is Noetherian, let $M$ be a finitely generated  $\G$-graded $R$-module and set $T_i:=\Supp_{\G}(\tor_i^R(M,S))$. Then, 

\begin{enumerate}
\item for any $\ell \geq 1$,
\[
 \reg_B^\ell (M)\supseteq \bigcap_{i\geq j, \gamma\in T_j} \reg_B^{\ell +i} (R)+\gamma-\FF_i.
\]

\item
\[
 \reg_B (M)\supseteq \bigcap_{i\geq j, i>0, k, \gamma\in T_j} \reg_B^{i} (R)+\gamma - \gamma_k -\FF_{i-1}.
\]
\end{enumerate}

The above intersections can be restricted to $i\leq \cd_B (R)-\ell$.

If further $S$ is a field,
 \[
 \reg_B^\ell (M)\supseteq \bigcap_{i, \gamma\in T_i} \reg_B^{\ell +i} (R)+\gamma-\FF_i
 \]
 
 \end{thm}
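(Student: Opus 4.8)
The plan is to deduce Theorem \ref{ThmTortoLC} directly from Proposition \ref{PrTortoLC} by unwinding the definitions of the regularity regions $\reg_B^\ell$. Recall that $\gamma \in \reg_B^\ell(M)$ means that $M$ is weakly $\gamma'$-regular at level $\ell$ for every $\gamma' \in \gamma + \Cc$, i.e. $\gamma' \notin \bigcup_{i\geq \ell}\Supp_\G(H^i_B(M))+\FF_{i-1}$ for all such $\gamma'$. So the statement to prove is a containment of $\Cc$-stable sets, and by the remark after Definition \ref{defRegLC} it suffices to check the pointwise weak-regularity condition for elements of the right-hand side. Concretely, for part (1), I would fix $\gamma$ in the intersection $\bigcap_{i\geq j,\ \gamma'\in T_j}\bigl(\reg_B^{\ell+i}(R)+\gamma'-\FF_i\bigr)$, and show $M$ is weakly $(\gamma+c)$-regular at level $\ell$ for every $c\in\Cc$; since this intersection is itself $\Cc$-stable (each $\reg_B^{\ell+i}(R)$ is $\Cc$-stable and we only translate by fixed elements), it is enough to show weak $\gamma$-regularity at level $\ell$, i.e. $\gamma\notin\Supp_\G(H^m_B(M))+\FF_{m-1}$ for all $m\geq \ell$.

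The core step is then: suppose for contradiction that $\gamma = \sigma + \phi$ with $\sigma\in\Supp_\G(H^m_B(M))$, $\phi\in\FF_{m-1}$, $m\geq\ell$. Apply Proposition \ref{PrTortoLC} with this $\ell$ replaced by $m$: it gives $\Supp_\G(H^m_B(M))+\EE_{m-1}\subseteq \bigcup_{i\geq j}(\Supp_\G(H^{m+i}_B(R))+\EE_{m-1}+T_j)$. The slight subtlety is that Proposition \ref{PrTortoLC} adds $\EE_{m-1}$ on the left while weak regularity uses $\FF_{m-1}$; I would handle this by writing $\FF_{m-1} = \EE_{m-1}+\Cc$-type decompositions, or more cleanly, note $\FF_{m-1}\subseteq \EE_{m-1}+\Cc$ is false in general, so instead I should feed a translated module or observe that $\sigma\in\Supp_\G(H^m_B(M))$ together with $\phi\in\FF_{m-1}$ means $\sigma+\phi'\in\Supp_\G(H^m_B(M))+\EE_{m-1}$ after writing $\phi=\phi'+\psi$ with... — in fact the cleanest route is to directly invoke Theorem \ref{lemSuppHi}/Corollary \ref{corSuppHi} style support bounds $\Supp_\G(H^m_B(M))\subseteq\bigcup_{i\geq j}(\Supp_\G(H^{m+i}_B(R))+T_j)$ and then add $\FF_{m-1}$ to both sides. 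Then $\gamma\in\Supp_\G(H^{m+i}_B(R))+T_j+\FF_{m-1}$ for some $i\geq j$, so there is $\gamma'\in T_j$ with $\gamma-\gamma'\in\Supp_\G(H^{m+i}_B(R))+\FF_{m-1}$. Since $i\geq j$ and the relevant indices satisfy $m+i\geq \ell+i$, the hypothesis $\gamma\in\reg_B^{\ell+i}(R)+\gamma'-\FF_i$ — i.e. $\gamma-\gamma'+\FF_i$ meets... — should be arranged to contradict $\gamma-\gamma'\notin\Supp_\G(H^{m'}_B(R))+\FF_{m'-1}$ for $m'\geq\ell+i$; this works because $m\geq\ell$ forces $m+i\geq\ell+i$ and $\FF_{m-1}\subseteq\FF_{m+i-1}$ up to the $\FF_i$ adjustment, matching the $-\FF_i$ in the statement.

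For part (2) the argument is the same but uses the $j=n$ exceptional case, or rather the finer version: $\reg_B(M)=\reg_B^0(M)$ requires controlling $H^0_B(M)$ as well, and the asymmetry between $\EE$ and $\FF$ at level $0$ is exactly why an extra $-\gamma_k$ and the shift from $\FF_i$ to $\FF_{i-1}$ appear. I would derive it by noting that weak $\gamma$-regularity at level $0$ is weak $\gamma$-regularity at level $1$ plus the condition $\gamma\notin\Supp_\G(H^0_B(M))+\FF_{-1}=\Supp_\G(H^0_B(M))-\EE_1$, i.e. $\gamma+\gamma_k\notin\Supp_\G(H^0_B(M))$ for all $k$; combining the level-$1$ bound from part (1) with $i\geq 1$ and this extra $H^0$ condition (absorbing $\gamma_k$) yields the stated formula with $\FF_{i-1}$ in place of $\FF_i$. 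The restriction to $i\leq\cd_B(R)-\ell$ is immediate since $H^{\ell+i}_B(R)=0$ once $\ell+i>\cd_B(R)$, so those terms in the intersection are vacuous (equal to all of $\G$). The field case is identical, using the second, sharper displayed inclusion in Proposition \ref{PrTortoLC} with $T_i$ in place of $\bigcup_{j\leq i}T_j$, which is available because over a field $T_i=\Supp_\G(\tor^R_i(M,S))$ exactly indexes the $i$-th free module in a minimal resolution (Lemma \ref{LemResolutions}(1)).

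I expect the main obstacle to be bookkeeping the interplay between the two families $\EE_\bullet$ and $\FF_\bullet$: Proposition \ref{PrTortoLC} is phrased with $\EE$, weak regularity with $\FF$, and the conclusion mixes $\FF_i$ and $\FF_{i-1}$. The key identity making everything consistent is $\FF_i = \bigcup_{p+q=i,\ q\geq 0}(\EE_p + (\text{repeated-degree corrections}))$ together with the $\Cc$-stability of all regions, which lets one trade a $\FF$-translate for an $\EE$-translate at the cost of moving into $\gamma+\Cc$ — harmless since we are proving a statement about $\Cc$-stable sets. Once that translation principle is set up cleanly (ideally as a one-line lemma or an inline observation: $\gamma\notin X+\FF_i$ for a $\Cc$-stable $X$ iff $(\gamma+\Cc)\cap(X+\EE_i)=\emptyset$ is \emph{not} quite right either, so care is needed), the rest is a routine chase through Proposition \ref{PrTortoLC}.
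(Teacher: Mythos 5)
Your approach for part (1) is the same as the paper's: invoke the support bound $\Supp_\G(H^m_B(M))\subseteq\bigcup_{i\geq j}\bigl(\Supp_\G(H^{m+i}_B(R))+T_j\bigr)$ of Corollary \ref{corSuppHi3}, add $\FF_{m-1}$ to both sides, and chase the inclusion down to a contradiction with $\gamma\in\reg_B^{\ell+i}(R)+\gamma'-\FF_i$ via the identity $\FF_{m-1}+\FF_i=\FF_{m+i-1}$ valid for $m\geq\ell\geq 1$. The wavering about $\EE$ vs.\ $\FF$ is unnecessary: after several false starts you do land on the correct fix (use the $\FF$-version of the corollary rather than the $\EE$-version quoted in Proposition \ref{PrTortoLC}), and after that the bookkeeping is routine. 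Your observations about the $i\leq\cd_B(R)-\ell$ restriction and the sharper field case are also correct.

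Part (2), however, has a genuine gap. You propose to deduce it by combining the part-(1) bound for $\reg_B^1(M)$ with the extra condition $\gamma+\gamma_k\notin\Supp_\G(H^0_B(M))$ coming from $\FF_{-1}=-\EE_1$, but this cannot be made to work as a black-box citation of part (1): the stated right-hand side $\bigcap\bigl(\reg_B^{i}(R)+\gamma'-\gamma_k-\FF_{i-1}\bigr)$ is \emph{not} contained in the right-hand side of part (1) at $\ell=1$. Indeed, by $\Cc$-stability of $\reg_B^{i}(R)$ one has $\reg_B^{i}(R)\subseteq\reg_B^{i}(R)-\gamma_k$, so $\reg_B^{i}(R)+\gamma'-\gamma_k-\FF_{i-1}\supseteq\reg_B^{i}(R)+\gamma'-\FF_{i-1}$, and the inclusion goes the wrong way: part (2)'s region can be strictly larger than part (1)'s at $\ell=1$, so membership in the former does not give membership in the latter, and you cannot conclude $\gamma\in\reg_B^1(M)$ from part (1). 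What the paper actually does is re-run the contrapositive argument from scratch at $\ell=0$: if $\mu\notin\reg_B(M)$ one gets $\mu\in\Supp_\G(H^{\ell'+i}_B(R))+\FF_{\ell'-1}+T_j$ for some $\ell'\geq 0$, $i\geq j$, and then splits on $\ell'$. When $\ell'\geq 1$ the part-(1) computation applies directly and one discards unneeded information to land in a part-(2) term; when $\ell'=0$ the term $\FF_{-1}=\{-\gamma_1,\dots,-\gamma_n\}$ is exactly what produces the $-\gamma_k$, and since the contribution of $\FF_{\ell'-1}$ to the budget is now $-1$ instead of $\geq 0$, only $\FF_{i-1}$ remains. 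You correctly identified where the $-\gamma_k$ and the index drop come from, but the intermediate claim ``combine with the level-$1$ bound from part (1)'' would fail, so the argument needs to be rewritten as a fresh case analysis on $\ell'$ rather than as an application of part (1).
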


\begin{proof}

If $\mu \not\in \reg_B^\ell (M)$,
by  Proposition \ref{PrTortoLC}, there exists $\ell'\geq \ell$ and $i\geq j$ such that 
$$
\mu \in \Supp_{\G}(H^{\ell' +i}_B(R))+\FF_{\ell'-1} +T_j.
$$
Hence, for proving (1), since $\ell'\geq \ell\geq 1$, $\FF_{\ell'-1}+\FF_i=\FF_{\ell'-1+i}$. Thus, for any $\gamma' \in \FF_i$,
$$
\mu+\gamma' \in \Supp_{\G}(H^{\ell +i}_B(R))+\FF_{i+\ell-1}  +T_j .
$$
Thus,  there exists $\gamma \in T_j$ such that $\mu+\gamma' -\gamma \in \Supp_{\G}(H^{\ell +i}_B(R))+\FF_{i+\ell-1}$, showing that $\mu  \not\in \reg_B^{\ell +i}(R)+\gamma -\gamma'$.

For (2), if $\ell >0$ or $i=0$ we proceed as above. Now assume $\ell'=0$ and $i>0$, then, there exists $\gamma \in T_j$ and $k$ such that for $\gamma_k\in \FF_1$ and all$ \gamma' \in \FF_{i-1}$,
\[
 \mu +\gamma_k -\gamma +\gamma' \in \Supp_{\G}(H^{i}_B(R))+\FF_{i-1}. \qedhere
\]
\end{proof}

\begin{exmp}
When $\G =\ZZ$ and the grading is standard, this reads with the usual definition of $\reg^\ell (M)\in \ZZ$ (notation as in Section $3$): 
$$
\reg_B^\ell (M)\leq \reg^\ell (R)+\max_i \{ \fin (\tor^R_i (M,S))-i\}  = \reg^\ell (R)+\max_i \{ b_i(M)\} .
$$
\end{exmp}


\subsection{Regularity and truncation of modules}

In this section we extend the results in \cite{MlS04} and \cite{STW06} and give sharper finite subsets of the grading group $G$ that bound the degrees of the minimal generators of a minimal free resolution of a truncation of $M$.

Lemma \ref{tronctor} below provides a multigraded variant for the bounds on the shifts in a minimal free resolution of $M_{\geq d}$ in the classical case. Here ``$M_{\geq d}$" is replaced by $M_{\Ss }$, $\Ss$ being a $\Cc$-stable subset of $G$.

As in the graded case this results take particular interest when $d\geq \reg(M)$, we apply Lemma \ref{tronctor} taking $\Ss\subseteq \reg_B(M)$, obtaining Theorem \ref{ThmLCtoTor3}. In particular, taking $\Ss=\mu+\Cc$, with $\mu\in  \reg_B(M)$ we get as corollary 
Thm.\ 5.4 in \cite{MlS04}, as well as several results in \cite{STW06} and Section 7 in \cite{MlS04} by studying higher Tor's modules.

\begin{lem}\label{tronctor}
Let $M$ be a $\G$-graded $R$-module and $\Ss \subset G$ such that $\Ss+\Cc=\Ss$. Set $N:=M_{\Ss }$. 

\begin{enumerate}

\item $\Supp_G ( \tor^R_{j}(N,S))\subseteq \EE_j +\Ss$.

\item The natural map
\[
\tor^R_{j}(N,S)_\eta \ra \tor^R_{j}(M,S)_\eta 
\]
is surjective for $\eta\in \bigcap_{\gamma\in \EE_{j}} (\gamma +\Ss) $ and an isomorphism 
for $\eta\in \bigcap_{\gamma\in \EE_{j+1}} (\gamma +\Ss)$.

\item  Let $E=\{ i_1,\ldots ,i_t\} \subseteq \{ 1,\ldots ,m\}$ and $\EE_i^E$ for $i\in\ZZ$ be as in Lemma \ref{CorLCtoTor2}. Denote by $I_E$ the ideal generated by the variables whose degrees are in  $ \{ \mu_{i_1},\ldots ,\mu_{i_t}\}$. 
The natural map,
\[
\tor^R_{j}(N,R/I_E)_\eta  \ra \tor^R_{j}(M,R/I_E)_\eta 
\]
is surjective for $\eta\in \bigcap_{\gamma\in \EE_{j}^E} (\gamma +\Ss) $ and an isomorphism 
for $\eta\in \bigcap_{\gamma\in \EE_{j+1}^E} (\gamma +\Ss)$.

\item With the hypotheses and notations of Lemma \ref{CorLCtoTor2},
\[
\Supp_\G(\tor^R_j(N,S))\subset \left( \bigcup_{k\geq 0}(\Supp_\G(H^{k}_B(M))+\EE_{k}^{E})+\EE_j\right) \cup  \left(\bigcup_{l=0}^{j} \left(\EE_{l+1}^E +\EE_{j-l}^{E'} +\complement \Ss \right)\right).
\]
\end{enumerate}
\end{lem}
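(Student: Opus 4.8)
The plan is to combine the exact sequence relating $N=M_{\Ss}$ and $M$ with the previously established bounds. First I would set up the short exact sequence of $\G$-graded $R$-modules
\[
0\to N\to M\to M/N\to 0,
\]
where $M/N=M/M_{\Ss}$ is concentrated in degrees outside $\Ss$, i.e. $\Supp_\G(M/N)\subseteq \complement\Ss$. The long exact sequence in $\tor^R_\bullet(-,R/I_E)$ then reads
\[
\cdots\to \tor^R_{j+1}(M/N,R/I_E)\to \tor^R_{j}(N,R/I_E)\to \tor^R_{j}(M,R/I_E)\to \tor^R_{j}(M/N,R/I_E)\to\cdots
\]
Since $R/I_E$ has a Koszul resolution on the variables $Y_E$ (degrees in $\{\mu_{i_1},\ldots,\mu_{i_t}\}$), one has $\Supp_\G(\tor^R_j(M/N,R/I_E))\subseteq \EE_j^E+\Supp_\G(M/N)\subseteq \EE_j^E+\complement\Ss$, and likewise $\Supp_\G(\tor^R_j(N,R/I_E))\subseteq \EE_j^E+\Ss$ because $\Supp_\G(N)\subseteq\Ss$; this gives items (1) and (3): the connecting/natural map $\tor^R_j(N,R/I_E)\to\tor^R_j(M,R/I_E)$ has kernel a quotient of $\tor^R_{j+1}(M/N,R/I_E)$ and cokernel a submodule of $\tor^R_j(M/N,R/I_E)$, so it is surjective whenever the degree $\eta$ avoids $\EE_j^E+\complement\Ss$, equivalently $\eta\in\bigcap_{\gamma\in\EE_j^E}(\gamma+\Ss)$, and an isomorphism when $\eta$ avoids both $\EE_j^E+\complement\Ss$ and $\EE_{j+1}^E+\complement\Ss$, i.e. $\eta\in\bigcap_{\gamma\in\EE_{j+1}^E}(\gamma+\Ss)$. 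Item (2) is the special case $E=\{1,\ldots,m\}$ (so $R/I_E=S$, $\EE_j^E=\EE_j$).

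For item (4), the idea is to run the same spectral-sequence computation as in Lemma~\ref{CorLCtoTor2} but for $N$ in place of $M$, and then to replace the $H^k_B(N)$ appearing there by $H^k_B(M)$ at the cost of the extra ``error'' terms coming from $M/N$. Concretely I would use the double Koszul complex on $(Y_E,Y_{E'})$ with coefficients in $N$, giving the spectral sequence $E^1_{p,q}=\kos_p(Y_{E'};\tor^R_q(N,R/I_E))\Rightarrow\tor^R_{p+q}(N,S)$, hence
\[
\Supp_\G(\tor^R_j(N,S))\subseteq\bigcup_{p+q=j}\bigl(T_p^{E,N}+\EE_q^{E'}\bigr),\qquad T_p^{E,N}:=\Supp_\G(\tor^R_p(N,R/I_E)).
\]
Now by part (3), $T_p^{E,N}$ differs from $T_p^{E,M}:=\Supp_\G(\tor^R_p(M,R/I_E))$ only in degrees lying in $\EE_p^E+\complement\Ss$ (union $\EE_{p+1}^E+\complement\Ss$ from the kernel side), so $T_p^{E,N}\subseteq T_p^{E,M}\cup(\EE_{p+1}^E+\complement\Ss)$. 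Finally, applying the other spectral sequence of Lemma~\ref{CorLCtoTor2}, $T_p^{E,M}\subseteq\bigcup_{k\geq 0}(\SS_B^k+\EE_{p+k}^E)$ with $\SS_B^k=\Supp_\G(H^k_B(M))$. Substituting and using $\EE_q^{E'}\subseteq\complement\Ss+\cdots$ is not needed; instead one just collects: the ``main'' part contributes $\bigcup_{p+q=j}\bigl(\bigcup_{k\geq 0}(\SS_B^k+\EE_{p+k}^E)+\EE_q^{E'}\bigr)=\bigcup_{k\geq 0}(\SS_B^k+\EE_k^E)+\EE_j$ exactly as in Lemma~\ref{CorLCtoTor2}, while the ``error'' part contributes $\bigcup_{p+q=j}\bigl(\EE_{p+1}^E+\complement\Ss+\EE_q^{E'}\bigr)=\bigcup_{l=0}^{j}\bigl(\EE_{l+1}^E+\EE_{j-l}^{E'}+\complement\Ss\bigr)$ after reindexing $l=p$. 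Taking the union of the two gives the claimed inclusion.

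The step I expect to be the main obstacle is bookkeeping the two error contributions correctly: one must check that the cokernel side of the map in (3) contributes a term already absorbed into the kernel-side estimate (or into the main part), so that only $\EE_{l+1}^E$ — and not also $\EE_l^E$ — appears, and that the reindexing of $p+q=j$ matches the stated range $0\leq l\leq j$. A secondary technical point is that the spectral sequence $E^1_{p,q}=\kos_p(Y_{E'};\tor^R_q(N,R/I_E))\Rightarrow\tor^R_{p+q}(N,S)$ only yields support containment on the abutment (since subquotients of the $E^\infty$-page are what survive), but that is exactly what is needed and is the same reasoning as in the proof of Lemma~\ref{CorLCtoTor2}. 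Everything else is formal manipulation of the Minkowski sums $\EE_k^E+\complement\Ss$ and the long exact sequence, using only that $\Supp_\G$ turns short exact sequences into containments and is additive under the Koszul shifts.
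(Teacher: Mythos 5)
Your argument is correct and coincides with the paper's proof: the same short exact sequence $0\to N\to M\to M/N\to 0$, the same Koszul support bound on $\tor^R_i(M/N,R/I_E)$ giving surjectivity and isomorphism ranges from the long exact sequence, and the same pair of spectral sequences as in Lemma~\ref{CorLCtoTor2} for part (4), with the identical estimate $T^E_p(N)\subseteq T^E_p(M)\cup(\EE_{p+1}^E+\complement\Ss)$. The only stylistic difference is that you deduce part (1) as the case $E=\{1,\ldots,m\}$ of the $R/I_E$ estimate, whereas the paper notes directly that $\Supp_G(\tor^R_j(N,S))\subseteq\Supp_G(\kos_j(X;N))=\Supp_G(N)+\EE_j$.
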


\begin{proof}
For (1) notice that $\Supp_G ( \tor^R_{j}(N,S))\subseteq \Supp_G (\kos_j (X;N))=\Supp_G ( N)+\EE_j$.

For (2), consider the exact sequence $0\ra N\ra M\ra C\ra 0$ defining $C$. One has by hypothesis,
$\Supp_G (C)\subseteq G\setminus \Ss$. The exact sequence
\[
\tor^R_{j+1}(C,S)\ra \tor^R_{j}(N,S)\ra \tor^R_{j}(M,S)\ra \tor^R_{j}(C,S)
\]
shows our claim since, for any $i$,
\[
\Supp_G ( \tor^R_{i}(C,S))\subseteq \Supp_G (C)+\EE_i\subseteq G\setminus (\bigcap_{\gamma\in \EE_i}\gamma +\Ss) .
\] 
For (3) one has similarly an exact sequence,
\[
\tor^R_{j+1}(C,R/I_E)\ra \tor^R_{j}(N,R/I_E)\ra \tor^R_{j}(M,R/I_E)\ra \tor^R_{j}(C,R/I_E)
\]
and the proof follows from the inclusion $\Supp_G ( \tor^R_{i}(C,R/I_E))\subseteq \Supp_G (C)+\EE_i^E$,
that holds for any $i$.

For (4), set $T_p^E (\hbox{---}):=\Supp_\G(\tor_p^R (\hbox{---},R/(Y_E)))$ and $T_p (\hbox{---}):=\Supp_\G(\tor_p^R (\hbox{---},S))$, $\SS^k_B(M) :=\Supp_G (H^k_B (M))$. 

The spectral sequence
$
E^1_{p,q}=\kos_p (Y_{E'}; \tor_q^R (N,R/(Y_E))) \Rightarrow \tor^R_{p+q}(N,S)
$ 
implies that $T_j(N)\subset \bigcup_{p+q=j}(T_p^E (N)+\EE^{E'}_q )$.
By (3), 
\[
T_p^E (N)\subseteq T_p^E (M)\cup \complement \bigcap_{\gamma\in \EE_{p+1}^E} (\gamma +\Ss )
=T_p^E (M)\cup (\EE_{p+1}^E+\complement \Ss )
\]
The inclusion  $T^E_p(M)\subseteq \cup_{k\geq 0} (\SS_B^k (M)+\EE_{p+k}^{E})$ gives, as in the proof 
of Lemma \ref{CorLCtoTor2},
\[
\Supp_G ( \tor^R_{j}(N,S))\subseteq \left(\bigcup_{k\geq 0}(\SS_B^k (M)+\EE_{k}^{E})+\EE_j\right)
\cup \left(\bigcup_{p+q=j} (\EE^E_{p+1}+\EE^{E'}_q)+\complement \Ss \right).\qedhere
\]
\end{proof}
 
As we mentioned, Theorem \ref{ThmLCtoTor3} is a $G$-graded version of the bounding of the degrees of minimal generators of a minimal free resolution of $M_{\geq d}$, where $d\geq \reg(M)$. This is essentially obtained from Lemma \ref{tronctor} taking $\Ss\subseteq \reg_B(M)$. 

\begin{thm}\label{ThmLCtoTor3}
Let $M$ be a $\G$-graded $R$-module, $\Ss\subseteq G$ such that $\Ss +\Cc =\Ss$ and $N:=M_{\Ss}$. Then
\[
\Supp_\G(\tor^R_j(N,S))\subseteq \EE_{j+1}+\complement (\Ss \cap \reg_B (M)).
\]
 If furthermore $B\subset  \sqrt{B_i+\ann_R(M)}$, then
 \[
 \Supp_\G(\tor^R_j(N,S))\subseteq \gamma_i +\EE_{j}+\complement (\Ss \cap \reg_B (M)).
\]
\end{thm}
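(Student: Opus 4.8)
The plan is to deduce both inclusions from Lemma \ref{tronctor} by specializing $\Ss$ to a $\Cc$-stable subset of $\reg_B(M)$ and then translating the support conditions on $H^k_B(M)$ into conditions on $\complement\reg_B(M)$. Concretely, I would first replace $\Ss$ by $\Ss':=\Ss\cap\reg_B(M)$; since $\reg_B(M)+\Cc=\reg_B(M)$ and $\Ss+\Cc=\Ss$, the intersection $\Ss'$ is again $\Cc$-stable, and $M_{\Ss'}$ is what we must control. (One should note $M_{\Ss}$ and $M_{\Ss'}$ may differ, so strictly the statement is about $N=M_{\Ss}$; the cleanest route is to prove the bound for $N'=M_{\Ss'}$ and observe that the exact sequence $0\to N'\to N\to N/N'\to 0$ with $\Supp_G(N/N')\subseteq G\setminus\Ss'$ lets one transfer the vanishing, exactly as in the proof of Lemma \ref{tronctor}(2); alternatively one re-runs that argument with $\Ss'$ throughout. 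I would mention this reduction explicitly.)

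For the first inclusion, I would apply Lemma \ref{tronctor}(1)–(2): part (1) gives $\Supp_G(\tor^R_j(N',S))\subseteq \EE_j+\Ss'$, but the sharper information comes from combining the surjection/isomorphism statements of part (2) with Theorem \ref{ThmLCtoTor1}. The point is: if $\eta\in\Supp_G(\tor^R_j(N',S))$ and $\eta\notin\EE_{j+1}+\complement\Ss'$, then $\eta\in\bigcap_{\gamma\in\EE_{j+1}}(\gamma+\Ss')$, so by Lemma \ref{tronctor}(2) $\tor^R_j(N',S)_\eta\cong\tor^R_j(M,S)_\eta\ne 0$; then Theorem \ref{ThmLCtoTor1} forces $\eta\in\EE_{j+1}+\complement\reg_B(M)$, and since $\eta\in\EE_{j+1}+\Ss'\subseteq\EE_{j+1}+\reg_B(M)$ is not itself contradictory, one needs to push $\eta$ by a suitable element of $\Cc$ into $\reg_B(M)$ and derive the contradiction — this bookkeeping with the monoid $\Cc$ and the finite sets $\EE_\bullet$ is where care is needed. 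Combining, $\Supp_G(\tor^R_j(N',S))\subseteq(\EE_{j+1}+\complement\Ss')\cup(\EE_{j+1}+\complement\reg_B(M))=\EE_{j+1}+\complement(\Ss'\cap\reg_B(M))=\EE_{j+1}+\complement(\Ss\cap\reg_B(M))$, using $\complement\Ss'\subseteq\complement(\Ss\cap\reg_B(M))$ in the last identification.

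For the second inclusion, under the hypothesis $B\subset\sqrt{B_i+\ann_R(M)}$, I would use Lemma \ref{tronctor}(4) with $E=\{i\}$ (so $\EE^E_k=\{k\gamma_i\}$, $\EE^{E'}_q$ an $\EE$-type set in the remaining variables): this bounds $\Supp_\G(\tor^R_j(N',S))$ by $\bigl(\bigcup_{k\geq 0}(\SS^k_B(M)+k\gamma_i)+\EE_j\bigr)\cup\bigl(\bigcup_{l=0}^{j}(\EE^E_{l+1}+\EE^{E'}_{j-l}+\complement\Ss')\bigr)$. The second piece is contained in $\gamma_i+\EE_j+\complement\Ss'$ since $\EE^E_{l+1}+\EE^{E'}_{j-l}\subseteq\gamma_i+\EE_j$. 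For the first piece I would invoke the argument already used in the proof of Theorem \ref{ThmLCtoTor2}: setting $\CCC:=\bigcup_k(\SS^k_B(M)+\EE_{k-1})$, one has $\SS^0_B(M)+\EE_j\subseteq\CCC+\gamma_i$ and $\SS^k_B(M)+k\gamma_i+\EE_j\subseteq\CCC+\gamma_i+\EE_j$ for $k\geq 1$ (absorbing $k\gamma_i+\EE_j$ into $\EE_{k-1}+\gamma_i+\EE_j$ up to reindexing), and then $\reg_B(M)\cap\CCC=\emptyset$ by definition of regularity. Hence the first piece lies in $\gamma_i+\EE_j+\complement\reg_B(M)$, and together with the second piece we get $\Supp_\G(\tor^R_j(N',S))\subseteq\gamma_i+\EE_j+\complement(\Ss'\cap\reg_B(M))=\gamma_i+\EE_j+\complement(\Ss\cap\reg_B(M))$.

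The main obstacle I anticipate is the monoid bookkeeping: translating "$\eta$ is not in $\EE_\bullet+\complement(\Ss\cap\reg_B(M))$" into a genuine contradiction requires translating $\eta$ by elements of $\Cc$ while staying inside $\reg_B(M)$ (which is $\Cc$-stable) and tracking how the finite difference sets $\EE_j,\EE_{j+1}$ shift, exactly the kind of argument carried out inside the proof of Theorem \ref{ThmLCtoTor1}. The other delicate point is the reduction from $M_{\Ss}$ to $M_{\Ss\cap\reg_B(M)}$, which must be justified via the short exact sequence and a support estimate for the relative Tor, as in Lemma \ref{tronctor}(2).
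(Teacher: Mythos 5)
Your proposal uses the right ingredients (Lemma \ref{tronctor}(2) together with Theorem \ref{ThmLCtoTor1} for the first inclusion; Lemma \ref{tronctor}(4) and the argument of Theorem \ref{ThmLCtoTor2} for the second), but the detour through $\Ss':=\Ss\cap\reg_B(M)$ is both unnecessary and the source of a real gap. The paper applies Lemma \ref{tronctor}(2) to $\Ss$ itself: since $\complement\bigcap_{\gamma\in\EE_{j+1}}(\gamma+\Ss)=\EE_{j+1}+\complement\Ss$, one gets directly
\[
\Supp_\G(\tor^R_j(N,S))\subseteq \Supp_\G(\tor^R_j(M,S))\cup(\EE_{j+1}+\complement\Ss),
\]
and Theorem \ref{ThmLCtoTor1} bounds the first term by $\EE_{j+1}+\complement\reg_B(M)$; the intersection $\Ss\cap\reg_B(M)$ then appears automatically because $(\EE_{j+1}+\complement\Ss)\cup(\EE_{j+1}+\complement\reg_B(M))=\EE_{j+1}+\complement(\Ss\cap\reg_B(M))$. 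There is no contradiction to derive: the clause ``one needs to push $\eta$ by a suitable element of $\Cc$ into $\reg_B(M)$ and derive the contradiction'' is a false trail, since Theorem \ref{ThmLCtoTor1} already delivers exactly the membership $\eta\in\EE_{j+1}+\complement\reg_B(M)$ you are after.

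The gap is this: by working with $N'=M_{\Ss'}$ you bound $\Supp_\G(\tor^R_j(N',S))$, not $\Supp_\G(\tor^R_j(N,S))$. Your proposed transfer via $0\to N'\to N\to N/N'\to 0$ only gives $\Supp_\G(\tor^R_j(N,S))\subseteq \Supp_\G(\tor^R_j(N',S))\cup\Supp_\G(\tor^R_j(N/N',S))$, and as $\Supp_\G(N/N')\subseteq\complement\Ss'$ the Koszul estimate yields $\Supp_\G(\tor^R_j(N/N',S))\subseteq\EE_j+\complement\Ss'$ --- with $\EE_j$, not $\EE_{j+1}$. Upgrading $\EE_j+\complement\Ss'$ to $\EE_{j+1}+\complement\Ss'$ needs an extra argument (choose a $\gamma_k$ with unused index and use that $\complement\Ss'$ is stable under subtraction of $\Cc$), which you do not carry out and which fails outright at $j=n$. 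The alternative you mention, re-running the argument with $\Ss'$ throughout, still only addresses $N'$, not $N$. The rest of the outline --- the use of Lemma \ref{tronctor}(4) with $E=\{i\}$, the observation $\EE^E_{l+1}+\EE^{E'}_{j-l}\subseteq\gamma_i+\EE_j$, the absorption into the set $\CCC$ from the proof of Theorem \ref{ThmLCtoTor2} and the disjointness $\reg_B(M)\cap\CCC=\emptyset$ --- is sound and is exactly the paper's second half; it just needs to be run with $\Ss$ in place of $\Ss'$.
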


Also recall that, by Lemma \ref{tronctor} (1), $\Supp_\G(\tor^R_j(N,S))\subseteq \EE_{j}+\Ss$, in particular
if $\Ss\subseteq \reg_B (M)$, one has 
\[
\Supp_\G(\tor^R_j(N,S))\subseteq  (\EE_{j}+\Ss )\cap(\EE_{j+1}+\complement \Ss),
\]
and
 \[
 \Supp_\G(\tor^R_j(N,S))\subseteq  (\EE_{j}+\Ss )\cap (\gamma_i +\EE_{j}+\complement \Ss ).
\]
if  $B\subset  \sqrt{B_i+\ann_R(M)}$.

\begin{proof} By Lemma \ref{tronctor} (2), 
\[
\Supp_\G(\tor^R_j(N,S))\subseteq \Supp_\G(\tor^R_j(M,S))\cup \complement \bigcap_{\gamma\in \EE_{j+1}} (\gamma +\Ss )
\]
as $\complement \bigcap_{\gamma\in \EE_{j+1}} (\gamma +\Ss ) =\EE_{j+1} +\complement \Ss$ the first inclusion follows from Theorem \ref{ThmLCtoTor1}. 

The second inclusion follows from Lemma \ref{tronctor} (4), along the lines of the proof of 
Theorem \ref{ThmLCtoTor2}, observing that whenever $E=\{ i\}$, $\EE^E_{l+1}+\EE_{j-l}^{E'}\subseteq \gamma_i +\EE_j$.
\end{proof}

We use again Example \ref{exmpSuppTor} in order to show the relation between regularity and vanishing of Betti numbers in the probably most common situation for a truncation of $M$. This result generalizes the fact that when $\G =\ZZ$ and the grading is standard, $\reg (M_{\geq d})+j\geq \fin (\tor^R_j (M_{\geq d},S))$ if $d\geq \reg(M)$. 

\begin{exmp}\label{exmpSuppTorBis}
Consider Example \ref{exmpSuppTor} again and take any $\mu\in G$, and $\Ss=\mu+\Cc$.

Lemma \ref{tronctor} says that if $N:=M_{\Ss}$. 

\begin{enumerate}
\item $\Supp_G ( \tor^R_{j}(N,S))\subseteq \mu+\EE_j +\Cc$. In particular, if the grading is standard, since $\reg(M)\leq \mu$ in $\ZZ$, this give $\Supp_\ZZ(\tor^R_{j}(N,S)) \geq \mu+ j$.

\item If $\eta\in \mu+\bigcap_{\gamma\in \EE_{j+1}} (\gamma +\Cc)$ then, 
\[
\tor^R_{j}(N,S)_\eta \cong \tor^R_{j}(M,S)_\eta 
\]
\end{enumerate}

With the standard grading we get,
\[
\tor^R_{j}(N,S)_{\geq \mu+j+1} \cong \tor^R_{j}(M,S)_{\geq \mu+j+1} 
\]

Theorem \ref{ThmLCtoTor3} with $\mu\in \reg_B(M)$ says that
$\Supp_\G(\tor^R_j(N,S))\subseteq \mu+\EE_{j+1}+\complement \Cc.$

Again taking the standard grading we obtain,
\[
\fin(\tor^R_j(N,S))\leq \mu+j+1+\complement \ZZ_{\geq 0}=\mu+j.
\]
This, together with point (1) above gives $\Supp_\ZZ(\tor^R_{j}(N,S)) = \mu+ j$ that in turns shows the fact that in the classical case $M_{\geq \mu}$ admits a $\mu$-linear resolution

\end{exmp}

We now compare the cohomology modules of $M$ with the cohomology modules of the truncations of $M$.

\begin{lem}\label{regtrunc}
Let $M$ be a $\G$-graded $R$-module, let $\Gamma$ be the submonoid of $\Cc$ generated by the degrees of elements of $B$, and take $\Ss \subset G$ such that $\Ss+\Cc=\Ss$. Set $N:=M_{\Ss }$, $R_\Gamma$ the subring of $R$ of elements of degree in $\Gamma$, and $\ZZ \Gamma \subseteq G$ be the subgroup generated by $\Gamma$. If the $R_\Gamma$-module $M_{\Ss +\ZZ\Gamma}/M_\Ss$ is $B$-torsion, then:

\begin{enumerate}

\item $H^i_B(M)_\gamma =H^i_B(M_{\gamma +\ZZ \Gamma})_\gamma$, for every $i$ and $\gamma\in G$.

\item $H^i_B(N)_\gamma =0$, for all $i$, if $\gamma\not\in \Ss +\ZZ\Gamma$.

\item $H^i_B(N)_\gamma =H^i_B(M)_\gamma$ for $i\geq 2$ and $\gamma\in \Ss +\ZZ\Gamma$.

\item For $\gamma\in \Ss +\ZZ\Gamma$, 
\[
 H^1_B(N)_\gamma \neq 0 \iff H^1_B(M)_\gamma\neq 0 \mbox{ or }\gamma\in \Supp_G(M/H^0_B(M))\setminus \Ss.
\]

\item $\Supp_G(H_B^0(N))=\Supp_G(H_B^0(M))\cap \Ss$, and $H_B^0(N)_\gamma=H_B^0(M)_\gamma$ if $H_B^0(N)_\gamma \not= 0$.
\end{enumerate}
\end{lem}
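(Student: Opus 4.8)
The plan is to reduce everything to a degree-by-degree analysis organized by the cosets of the subgroup $\ZZ\Gamma\subseteq G$. The key initial observation, which gives part (1), is that the \v Cech complex $\check C^\bullet_B(X)$ of any graded $R$-module $X$ is itself $G$-graded, and since each homogeneous generator of $B$ has degree in $\Gamma$ (so $B\subseteq R_\Gamma$), localizing at a product of such generators never moves a graded piece out of its $\ZZ\Gamma$-coset. Hence, for fixed $\gamma$, the degree-$\gamma$ strand $\check C^\bullet_B(X)_\gamma$ depends only on $X_{\gamma+\ZZ\Gamma}:=\bigoplus_{\delta\in\ZZ\Gamma}X_{\gamma+\delta}$, which is naturally an $R_\Gamma$-module, and in fact coincides with the degree-$\gamma$ strand of the \v Cech complex of $X_{\gamma+\ZZ\Gamma}$ over $R_\Gamma$; taking cohomology yields $H^i_B(X)_\gamma=H^i_B(X_{\gamma+\ZZ\Gamma})_\gamma$. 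Part (2) is then immediate: if $\gamma\notin\Ss+\ZZ\Gamma$ the coset $\gamma+\ZZ\Gamma$ misses $\Ss$, so $N_{\gamma+\ZZ\Gamma}=(M_\Ss)_{\gamma+\ZZ\Gamma}=0$ and part (1) forces $H^i_B(N)_\gamma=0$ for all $i$. Part (5) I would prove directly: as $N\subseteq M$ is a graded submodule, $H^0_B(N)=N\cap H^0_B(M)$, whose degree-$\gamma$ component is $H^0_B(M)_\gamma$ when $\gamma\in\Ss$ and $0$ otherwise.

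For parts (3) and (4) I would use the short exact sequence $0\to N\to M\to C\to 0$ with $C:=M/M_\Ss$ (so $C_\gamma=M_\gamma$ for $\gamma\notin\Ss$, $C_\gamma=0$ for $\gamma\in\Ss$) and its long exact sequence in local cohomology. The crucial point is that for a coset $\mathfrak c=\gamma+\ZZ\Gamma$ meeting $\Ss$, the $R_\Gamma$-module $C_{\mathfrak c}$ is $B$-torsion: indeed $P:=M_{\Ss+\ZZ\Gamma}$ is a graded $R$-submodule of $M$ ($\Ss+\ZZ\Gamma$ being $\Cc$-stable), $P/M_\Ss=M_{(\Ss+\ZZ\Gamma)\setminus\Ss}$ is $B$-torsion by hypothesis, and since $\mathfrak c\subseteq\Ss+\ZZ\Gamma$ the module $C_{\mathfrak c}$ is an $R_\Gamma$-direct summand of $P/M_\Ss$, hence $B$-torsion. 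Consequently $H^i_B(C_{\mathfrak c})=0$ for $i\ge1$ and $H^0_B(C_{\mathfrak c})=C_{\mathfrak c}$, so by part (1), for $\gamma\in\Ss+\ZZ\Gamma$ one has $H^i_B(C)_\gamma=0$ for $i\ge1$ and $H^0_B(C)_\gamma=C_\gamma$. Feeding this into the long exact sequence gives $H^i_B(N)_\gamma\cong H^i_B(M)_\gamma$ for $i\ge2$, which is part (3); and the tail
\[
H^0_B(M)_\gamma\longrightarrow C_\gamma\longrightarrow H^1_B(N)_\gamma\longrightarrow H^1_B(M)_\gamma\longrightarrow 0
\]
(the first arrow being the degree-$\gamma$ part of $H^0_B(M)\hookrightarrow M\twoheadrightarrow C$) exhibits $H^1_B(N)_\gamma$ as an extension of $H^1_B(M)_\gamma$ by $\coker(H^0_B(M)_\gamma\to C_\gamma)=(M/(H^0_B(M)+N))_\gamma$. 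A two-case check according to whether $\gamma\in\Ss$ (this cokernel is $0$, no extra contribution) or $\gamma\notin\Ss$ (it equals $(M/H^0_B(M))_\gamma$) identifies the extra contribution with membership of $\gamma$ in $\Supp_G(M/H^0_B(M))\setminus\Ss$, giving part (4).

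The only genuinely delicate part is the bookkeeping behind part (1) and the torsion reduction: one must check carefully that $B\subseteq R_\Gamma$, that $X_{\gamma+\ZZ\Gamma}$ really carries an $R_\Gamma$-module structure whose \v Cech complex recovers the degree-$\gamma$ strand of $H^\bullet_B(X)$, and that restriction to a single $\ZZ\Gamma$-coset is compatible with passing to $R_\Gamma$-direct summands, so that the $B$-torsion hypothesis on $M_{\Ss+\ZZ\Gamma}/M_\Ss$ transfers to each $C_{\mathfrak c}$. Once these points are in place, (3) and (4) are a routine chase in the long exact sequence, while (2) and (5) are immediate.
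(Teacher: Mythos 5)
Your proposal is correct and follows essentially the same route as the paper: the coset decomposition of the \v Cech complex (your part~(1)), the $B$-torsion hypothesis on the cokernel, and the long exact sequence in local cohomology. The only organizational difference is that you take the short exact sequence $0\to N\to M\to C\to 0$ with the full module $M$ and then invoke the coset observation to compute $H^\bullet_B(C)_\gamma$, whereas the paper first restricts to a single coset (setting $M':=M_{\gamma+\ZZ\Gamma}$) and works with $0\to N\to M'\to C\to 0$ there; you also establish (5) by the direct identity $H^0_B(N)=N\cap H^0_B(M)$ rather than reading it off the long exact sequence. These are cosmetic variations; the content is identical, and your careful verification that $C_{\gamma+\ZZ\Gamma}$ is an $R_\Gamma$-direct summand of $M_{\Ss+\ZZ\Gamma}/M_\Ss$ (hence $B$-torsion) is exactly the point the paper relies on implicitly.
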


\begin{proof}
Set $M':=M_{\gamma +\ZZ\Gamma }$.

For (1) notice that the complexes of $R_\Gamma$-modules $\check \Cc^\bullet_B (M)_\gamma$ and $\check \Cc^\bullet_B (M')_\gamma$ coincide. 

Statement (2) holds since the complex of $R_\Gamma$-modules $\check \Cc^\bullet_B (N)_\gamma$
is the zero complex  whenever $\gamma\not\in \Ss +\ZZ\Gamma$.

The short exact sequence $0\to N\to M'\to C\to 0$ induces a long exact sequence 
on cohomology that proves (3) using (1), since $H^i_B (C)=0$ for $i>0$ as  $C$ is $B$-torsion. It also 
provides an exact sequence
$$
0\to  H^0_B (N)\to H^0_B (M')\to M'/N \to H^1_B (N)\to H^1_B (M')\to 0
$$

This sequence shows (5) and proves that $H^i_B (N)_\gamma =H^i_B (M')_\gamma$  for all $i$ if $\gamma
\in \Ss$. For $\gamma\in ( \Ss +\ZZ\Gamma)\setminus \Ss$, it gives the exact sequence 
$$
0\to H^0_B (M)_\gamma \to M_\gamma \to H^1_B (N)_\gamma \to H^1_B (M)_\gamma \to 0
$$
which completes the proof.
\end{proof}

Notice that when $G=\ZZ^r$ and the grading is the classical multigrading, then $\Cc=\ZZ_{\geq 0}^r$, and $\Gamma=(1,\hdots,1)\cdot \ZZ_{\geq 0}^r$. Thus, $\ZZ\Gamma=G$, and in particular $\Ss+\ZZ\Gamma=G$.
\begin{rem}
The equality $H^i_B (N)_\gamma =H^i_B (M)_\gamma$,  for $\gamma\in \Ss$ and all $i$, that we have proved above, is sufficient for most of the applications.
\end{rem}

\begin{prop}\label{propRegEll}
Let $M$ be a $\G$-graded $R$-module, let $\Gamma$ be the submonoid of $\Cc$ generated by the degrees of elements of $B$, and take $\Ss \subset G$ such that $\Ss+\Cc=\Ss$.  If $M_{\Ss +\ZZ\Gamma}/M_\Ss$ is $B$-torsion, for instance if $B\subseteq B_i$ for all $i$, then:

\begin{enumerate}
\item $ \reg^\ell_B (M_\Ss )=\reg^\ell_B (M)\cap (\Ss +\ZZ\Gamma )$, for $\ell \geq 2$.

\item $ \reg^\ell_B (M_\Ss )\cap \Ss =\reg^\ell_B (M)\cap \Ss $ for all for $\ell$.
\end{enumerate}

\end{prop}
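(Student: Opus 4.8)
The plan is to deduce both parts from Lemma~\ref{regtrunc}: I would translate the comparison between the local cohomology of $M$ and of $N:=M_\Ss$ into a comparison of their weak‑regularity loci, and then pass to $\Cc$‑stable hulls. Write $T:=\Ss+\ZZ\Gamma$ and, for a level $\ell$ and a graded module $P$, put $Z_\ell(P):=\bigcup_{i\geq\ell}\bigl(\Supp_\G(H^i_B(P))+\FF_{i-1}\bigr)$, so that by the remark after Definition~\ref{defRegLC} the set $\reg_B^\ell(P)$ is the largest $\Cc$‑stable subset of $\complement Z_\ell(P)$. The only structural input needed besides Lemma~\ref{regtrunc} is that $\ZZ\Gamma$ is the subgroup of $\G$ generated by $\gamma_1,\dots,\gamma_n$: indeed, since $B\neq 0$, for a nonzero homogeneous $b\in B$ and each variable $X_k$ one has $X_kb\in B$, hence $\gamma_k=\deg(X_kb)-\deg(b)\in\ZZ\Gamma$. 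Thus $\FF_j\subseteq\ZZ\Gamma$ for all $j$ and $\Cc\subseteq\ZZ\Gamma$, so $T$ and $\complement T$ are both $\Cc$‑stable, and $(A+\FF_j)\cap T=(A\cap T)+\FF_j$ for any $A\subseteq\G$ and any $j$ (because $\FF_j\subseteq\ZZ\Gamma$ and $T-\ZZ\Gamma=T$).

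For part~(1) I would use that, when $\ell\geq 2$, only the modules $H^i_B(-)$ with $i\geq 2$ enter $Z_\ell$, and Lemma~\ref{regtrunc}(2)--(3) gives $\Supp_\G(H^i_B(N))=\Supp_\G(H^i_B(M))\cap T$ for $i\geq 2$. Substituting this into $Z_\ell$ and applying the set identity above termwise yields $Z_\ell(N)=Z_\ell(M)\cap T$, hence $\complement Z_\ell(N)=\complement Z_\ell(M)\cup\complement T$. Intersecting $\Cc$‑stable hulls with $T$: if $\gamma\in T$ then $\gamma+\Cc\subseteq T$ is disjoint from $\complement T$, so $\gamma+\Cc\subseteq\complement Z_\ell(N)\iff\gamma+\Cc\subseteq\complement Z_\ell(M)$, i.e. $\reg_B^\ell(N)\cap(\Ss+\ZZ\Gamma)=\reg_B^\ell(M)\cap(\Ss+\ZZ\Gamma)$, which is the content of~(1) (and reads $\reg_B^\ell(N)=\reg_B^\ell(M)$ when $\Ss+\ZZ\Gamma=\G$, as in the classical multigraded situation recalled before the statement, since $\complement T$ is $\Cc$‑stable and disjoint from $Z_\ell(N)\subseteq T$).

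For part~(2) I would fix an arbitrary level $\ell$ and a degree $\gamma\in\Ss$ (so $\gamma+\Cc\subseteq\Ss$) and compare $Z_\ell(N)$ and $Z_\ell(M)$ only on $\gamma+\Cc$. For $\gamma'\in\gamma+\Cc$, $i\geq\ell$ and $f\in\FF_{i-1}\subseteq\ZZ\Gamma$ the degree $\gamma'-f$ lies in $\Ss-\ZZ\Gamma=T$; when $i\geq 2$, Lemma~\ref{regtrunc}(3) gives $H^i_B(N)_{\gamma'-f}=H^i_B(M)_{\gamma'-f}$; when $i=1$ one has $\FF_0=\{0\}$, so $\gamma'-f=\gamma'\in\Ss$ and Lemma~\ref{regtrunc}(4) gives $H^1_B(N)_{\gamma'}\neq 0\iff H^1_B(M)_{\gamma'}\neq 0$, because the exceptional locus $\Supp_\G(M/H^0_B(M))\setminus\Ss$ misses $\Ss$; when $i=0$ one has $\FF_{-1}=-\FF_1$, so $\gamma'-f=\gamma'+\gamma_k\in\Ss$ and Lemma~\ref{regtrunc}(5) applies. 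Hence $H^i_B(N)_{\gamma'-f}\neq 0\iff H^i_B(M)_{\gamma'-f}\neq 0$ in every case, so $(\gamma+\Cc)\cap Z_\ell(N)=(\gamma+\Cc)\cap Z_\ell(M)$ and therefore $\gamma\in\reg_B^\ell(N)\iff\gamma\in\reg_B^\ell(M)$, giving $\reg_B^\ell(N)\cap\Ss=\reg_B^\ell(M)\cap\Ss$.

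I expect the only delicate point to be the bookkeeping at cohomological level $1$ in~(2): truncating $M$ to $N=M_\Ss$ can manufacture new first local cohomology, but Lemma~\ref{regtrunc}(4) confines the new classes to degrees in $T\setminus\Ss$, and these are invisible to the weak‑regularity test on $\Ss$ precisely because $H^1_B$ contributes to $Z_\ell$ only through the zero shift $\FF_0$. Everything else reduces to the identity $(A+\FF_j)\cap T=(A\cap T)+\FF_j$, which itself rests on the observation that $\ZZ\Gamma$ already contains every $\FF_j$ and that $\complement T$ is $\Cc$‑stable.
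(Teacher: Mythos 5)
Your proof is correct and follows the same route as the paper's (both rest on the comparison of local cohomology from Lemma~\ref{regtrunc}), but with two noteworthy refinements. For part~(2) you handle the level $\ell=0$ exactly like the higher levels, using Lemma~\ref{regtrunc}(5) directly: since $\FF_{-1}=\{-\gamma_k\}\subset\ZZ\Gamma$ and $\gamma'+\gamma_k\in\Ss$ whenever $\gamma'\in\Ss$, the relevant $H^0_B$-components of $M$ and $M_\Ss$ agree. The paper instead deduces $\ell=0$ from the already-established $\ell=1$ case by contradiction: from the inclusion $H^0_B(M_\Ss)_\gamma\subset H^0_B(M)_\gamma$ it first gets $\reg_B(M_\Ss)\cap\Ss\supset\reg_B(M)\cap\Ss$, then assumes a $\gamma\in\reg_B(M_\Ss)\cap\reg^1_B(M)\cap\Ss$ which is not weakly regular for $M$, forces $\gamma+\gamma_i\in\Supp_G(H^0_B(M))$ for some $i$, and contradicts Lemma~\ref{regtrunc}(5). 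Your direct argument is shorter and uniform in $\ell$. On part~(1), your derivation makes explicit what the paper leaves implicit: you prove $Z_\ell(M_\Ss)=Z_\ell(M)\cap T$ with $T=\Ss+\ZZ\Gamma$, and correctly obtain $\reg_B^\ell(M_\Ss)\cap T=\reg_B^\ell(M)\cap T$; and you rightly observe that $\complement T\subset\reg_B^\ell(M_\Ss)$ always, since $H^i_B(M_\Ss)$ is supported in $T$ and $\gamma+\Cc\subset\gamma+\ZZ\Gamma$ is disjoint from $T$ for $\gamma\notin T$. Consequently the literal equality $\reg_B^\ell(M_\Ss)=\reg_B^\ell(M)\cap T$ holds only when $T=G$ (e.g.\ in the standard multigrading, as you note); otherwise it should read $\reg_B^\ell(M_\Ss)\cap T=\reg_B^\ell(M)\cap T$. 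That is a small imprecision in the statement, not a gap in your proof, and the set identity $(A+\FF_j)\cap T=(A\cap T)+\FF_j$ (resting on $\FF_j\subset\ZZ\Gamma$ and $T-\ZZ\Gamma=T$) is precisely the bookkeeping the paper compresses into ``follows directly from Lemma~\ref{regtrunc}(2) and (3).''
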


\begin{proof}
Statement (1) follows directly from Lemma \ref{regtrunc} (2) and (3). 

Statement (2) follows from \ref{regtrunc} (3) for $\ell \geq 2$ and from \ref{regtrunc} (3) and (4) for $\ell = 1$. 

As $H^0_B(N)_\gamma \subset H^0_B(M)_\gamma$ for all $\gamma \in  G$ and 
$ \reg^1_B (M_\Ss )\cap \Ss =\reg^1_B (M)\cap \Ss $, one has 
 $ \reg_B (M_\Ss )\cap \Ss \supset \reg_B (M)\cap \Ss $, and if this inclusion is strict, there exists $\gamma\in  \reg_B (M_\Ss )\cap \reg^1_B (M)\cap\Ss$ such that $M$ is not weakly $\gamma$-regular. This implies that $\gamma\in \Supp_G(H^0_B(M))+\EE_{-1}$, hence $\gamma +\gamma_i\in \Supp_G(H^0_B(M))$ for some $i$. Since $\gamma+\gamma_i\in \Ss$ and $\gamma +\gamma_i\notin \Supp_G(H^0_B(M_\Ss))$ this contradicts \ref{regtrunc} (5).
\end{proof}

\begin{exmp}
When $G=\ZZ$ and the grading is standard, if $\Ss=\ZZ_{\geq d}$, Proposition \ref{propRegEll} says the following: $\max\{\reg(M_{\geq d}),d\} =\max\{\reg(M),d\}$, showing that 
$\reg(M_{\geq d}) =\max\{\reg(M),d\}$ unless $M_{\geq d}=0$.
\end{exmp}

\begin{rem} In the case  $M_{\Ss +\ZZ\Gamma}/M_\Ss$ is $B$-torsion, Theorem \ref{ThmLCtoTor3} follows from Theorems \ref{ThmLCtoTor1} and \ref{ThmLCtoTor2} using Proposition \ref{propRegEll}(2).
\end{rem}


\subsection{Example: a hypersurface in $\PP^1\times \PP^1$.}

Next example illustrates in a very simple case that local cohomology vanishing of a principal ideal depends not only on the degree but on the generator, contrary to the classical theory. For simplicity we treat the case of a form of bidegree $(1,1)$, we show that the vanishing of local cohomology depends on the factorization of the form. The same kind of phenomenon occur for any bidegree. This is not the case in $\PP^r\times \PP^s$: the support depends in this case at least on the heights of the content of the polynomial with respect to the two sets of variables.

In this example we show that the bound on Tor is sharp in the sense that an $R$-module with regularity $\ZZ^2_{\geq 0}=\reg_B(R)$ can have a first syzygy of bidegree $(1,1)$. It also shows that two modules with the same resolution may have different regularity al level $2$.

\medskip

Let $k$ be a field, and $R=k[X_1,X_2, Y_1,Y_2]$ with its standard bigrading,
$B_1 :=(X_1,X_2)$ and $B_2 :=(Y_1,Y_2)$
$B:=B_1 \cap B_2$ and $R_+ :=B_1 + B_2$.   Let  $F\neq 0$ be a bi-homogeneous element of $R$ of bidegree $(1,1)\in \ZZ^2$. 

The exact sequence $0\to R(-1,-1)\nto{\times F}{\lto} R \to R/F\to 0$ gives rise to the long exact sequence
\begin{equation*}\label{l.e.s.}
 \xymatrix@R-20pt@C-15pt {
  0\to H^1_B(R/F)\ar[r] 
  & H^2_B(R(-1,-1))\ar[r]^{\qquad \varphi} 
  & H^2_B(R)\ar[r] 
  & H^2_B(R/F)\ar `[r] `[d] '[dll] *{} `[ddll] `[ddll] [ddl]
  & \\
  &&& \\
  {}
  &
  & H^3_B(R(-1,-1))\ar[r]^{\qquad \psi}  
  & H^3_B(R)\ar[r] 
  & H^3_B(R/F)\ar[r] 
  & 0, 
}
\end{equation*}
where $\varphi$ and $\psi$ are multiplication by $F$.
The commutative square 
\begin{equation*}\label{square}
 \xymatrix {
  H^3_B(R(-1,-1))\ar[d]^{\simeq}\ar[r]^{\qquad \psi} & H^3_B(R)\ar[d]^{\simeq}\\
  H^4_{R_+}(R(-1,-1))\ar[r]^{\qquad \times F} & H^4_{R_+}(R)
}
\end{equation*}
identifies $K:=\ker(\psi)$ with $H^3_{R_+}(R/F)$,
 which is the graded $k$-dual of $\omega_{R/F}=(R/F)(-1,-1)$. Hence, $\Supp_{\ZZ^2}(K)=\ZZ_{<0}^2$.

One further notice that the map $\varphi_{(a,b)}$ is not injective for $a=-1$ and $b\geq 1$ and $b=-1$ and $a\geq 1$ as this map goes from a non-zero source to zero. 
It follows that  $ \Supp_{\ZZ^2}(H^1_B(R/F)) \supset (\{-1\}\times \ZZ_{>0})\cup  (\ZZ_{>0}\times \{-1\})$.

As $ \Supp_{\ZZ^2}(H^2_B(R/F))\subset \Supp_{\ZZ^2}(K) \cup \Supp_{\ZZ^2}(H^2_B(R))$ one deduces that 
\[
\reg_B(R/F)= \ZZ^2_{\geq 0}.
\]

\begin{rem} Notice that $\reg_B(R/F)$ does not depend on $F$ even though the support of $H^1_B(R/F)$ and $H^2_B(R/F)$  do depend on $F$, and $\reg^2_B(R/F)$ as well.

Indeed, the supports of $H^1_B(R/F)\simeq \ker(\varphi)$ and $C:=\coker(\varphi)$ depend upon the reducibility of $F$.

The following picture shows the support of the two cohomology modules in the two cases. For $H^1_B(R/F)$: $\blacktriangle$ indicates a bidegree that is in the support of $H^1_B(R/F)$ independently of $F\neq 0$ (of bidegree $(1,1)$), and $\triangle$ indicates a bidegree where $H^1_B(R/F)\neq 0$ iff $F$ is a product of two linear forms. The non-marked spots are never in the support of  $H^1_B(R/F)$.

For $H^2_B(R/F)$: $\bullet$ indicates a bidegree in the support of $K$ (which is independent of $F\neq 0$), $\blacksquare$ stands for a bidegree where $C\neq 0$ independently of $F\neq 0$,  and $\square$ indicates a bidegree where $C\neq 0$ iff $F$ is a product of two linear forms. The non-marked spots on the upper-right part are never in the support of  $H^2_B(R/F)$.

\begin{center}
\includegraphics{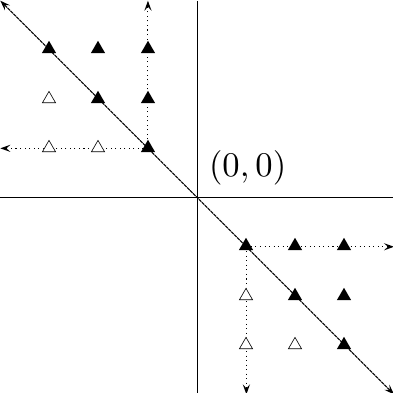}
\qquad
\includegraphics{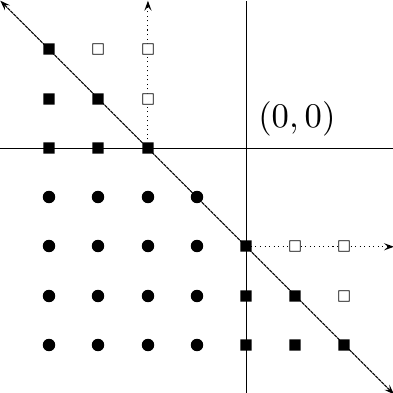}

$\Supp_{\ZZ^2}(H^1_B(R/F))$\hspace{2cm} $\Supp_{\ZZ^2}(H^2_B(R/F))$
\end{center}
\end{rem}

The computation of the regions marked with squares comes from the explicit calculation of $H^1_B(R/F)$ and $C$. After linear change of coordinates one is reduced to treat the two cases: $F_1= X_1Y_1$ and $F_2=X_1Y_1+X_2Y_2$.

Notice that the source and target of $\varphi_{(a,b)}$ vanish unless $a\leq -1$ and $b\geq 0$ or $b\leq -1$ and $a\geq 0$. We treat the first case, the second can be treated the same way by changing the roles of $X$ and $Y$.

Now, $\varphi_{(-1,*)}=0$ and when $a\leq -2$, $\varphi_{(a,*)} : k[Y_1,Y_2](-1)^{-a}\to k[Y_1,Y_2]^{-a-1}$ is given by one of the two following matrices
\[
\begin{pmatrix}
Y_1 & 0 & \cdots & 0& 0 \\
0 & Y_1 & \ddots & \vdots & \vdots\\
\vdots & \ddots& \ddots & 0& \vdots\\
0 & \cdots& 0& Y_1&0
\end{pmatrix}
\quad \mbox{and}\quad
\begin{pmatrix}
Y_1 & Y_2 & 0& \cdots & 0 \\
0 & Y_1 & Y_2 & \ddots & \vdots\\
\vdots & \ddots& \ddots & \ddots& 0\\
0 & \cdots& 0& Y_1&Y_2
\end{pmatrix},
\]
the first for $F_1$ and the second for $F_2$. It follows that the kernel of $\varphi_{(a,*)}$ is $k[Y_1,Y_2](-1)$ in the first case, and $k[Y_1,Y_2](a)$ in the second case. The cokernel of $\varphi_{(a,*)}$ is $k[Y_2]^{-a-1}$ in the first case and is $(Y_1,Y_2)$-primary generated in degree $0$ and of regularity $-a+2$ in the second case.

A similar computation shows that for a form $F$ of arbitrary bidegree $(d,e)$ the regularity is $(d-1,e-1)+\reg_B(R)$, and the support of local cohomology depends upon the existence of factors of $F$ in $k[X_1,X_2]$, or $k[Y_1,Y_2]$, or both.

\medskip

Applying Theorem \ref{ThmLCtoTor2} to an $R$-module $M$ with regularity and support in $\ZZ^2_{\geq 0}$, one obtains that $\Supp_\G(\tor^R_1(M,k))$ is in the intersection 
\[
((1,0) +\EE_1+\complement \ZZ^2_{\geq 0})\cap((0,1) +\EE_1+\complement \ZZ^2_{\geq 0})\cap (\ZZ^2_{\geq 0}+\EE_1),
\]
which is equal to $(0\times \ZZ_{>0}) \cup (\ZZ_{>0}\times  0) \cup \{(1,1)\}$.

Taking $M=R/F$ as above one sees that indeed $(1,1)\in \Supp_{\ZZ^2}(\tor^R_1(R/F,k))$ may occur.

\medskip

For concluding this section we leave the following open question:

\begin{ques}\label{questionReg}
Is there a ring $R$, a finitely generated ideal $B$ and two modules $M$ and $N$ satisfying: $\Supp_\G(\tor^R_i(M,k))=\Supp_\G(\tor^R_i(N,k))$, and $\reg_B(M)\neq \reg_B(N)$?
\end{ques}

This example given provides a positive answer for $\reg^\ell_B(M)\neq \reg^\ell_B(N)$ when $\ell=2$, and we expect the answer to be positive for $\ell=0$.

\subsection{Hilbert functions and regularity for standard multigraded polynomial rings}\label{secHF}

The aim of this part is to recall the application of Castelnuovo-Mumford regularity to the study of Hilbert functions. 
Lemma \ref{lemClassCC} provides a short proof of Grothendieck-Serre formula for standard multigraded polynomial rings. This lemma shows in particular that if the function $F_R$ (given in \eqref{eqFM}) of a multigraded polynomial ring $R$ belongs to a class closed under shifts and addition then so does $F_M$ for any finitely generated $R$-module $M$.
 
Let $R$ be a Noetherian polynomial ring over a field $k$, graded by an abelian group $G$ and $B$ be a non trivial graded ideal.
Assume that $H^i_B(R)_\mu$ is a finite dimensional $k$-vector space for any $\mu\in G$. 

For a finitely generated graded $R$-module $M$ set $[M](\mu ):=\dim_k (M_\mu )$ and
\begin{equation}\label{eqFM}
F_M (\mu ):=[M](\mu )-\sum_i (-1)^i [H_B^i(M)](\mu ).
\end{equation}
It follows from the proof of Lemma \ref{lemClassCC} below that $[H_B^i(M)](\mu )$ is finite for any $i$ and $\mu$. Recall that in the standard graded situation, $F_M$ is a polynomial function, called the Hilbert polynomial of $M$.

\begin{lem} \label{lemClassCC}
Let $\CC$ be the smallest set of numerical functions from $G$ to $\ZZ$ containing $F_R$ such that for any $F,G\in \CC$ and $\gamma\in G$, the
function $F+G$, $-F$ and $F\{ \gamma \}: g\mapsto F(\gamma +g)$ are in $\CC$. 

Then $\CC$ coincides with the set of functions of the form $\sum_{i=0}^{s} (-1)^iF_{M_{i}}$ with $s\in \ZZ_{\geq 0}$ and the $M_i$'s in the category of finitely generated graded $R$-modules.
\end{lem}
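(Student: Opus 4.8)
The plan is to prove $\CC=\mathcal{A}$, where $\mathcal{A}$ denotes the set of all functions $\sum_{i=0}^{s}(-1)^iF_{M_i}$ with $s\in\ZZ_{\geq 0}$ and the $M_i$ finitely generated graded $R$-modules; this amounts to two inclusions.

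The substantive one is $\mathcal{A}\subseteq\CC$. Since $\CC$ is, by definition, closed under $(F,G)\mapsto F+G$ and $F\mapsto -F$, it suffices to show $F_M\in\CC$ for every finitely generated graded $R$-module $M$ — and doing this will simultaneously establish the finiteness of each $[H^i_B(M)](\mu)$ that is needed even to define $F_M$. I would take a finite graded free resolution $0\to F_p\to\cdots\to F_0\to M\to 0$ with each $F_i$ of finite rank (such a resolution exists because $R$ is a polynomial ring over a field), split it into short exact sequences $0\to Z_i\to F_i\to Z_{i-1}\to 0$ with $Z_{-1}:=M$ and $Z_{p-1}\cong F_p$, and run an induction from the bottom up. For a finite-rank free module $F=\bigoplus_{j}R[\gamma_j]$ one has $H^i_B(F)\cong\bigoplus_{j}H^i_B(R)[\gamma_j]$, whose graded pieces are finite dimensional by the standing hypothesis on $R$, and $F_F=\sum_{j}F_R\{\gamma_j\}\in\CC$. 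Climbing one short exact sequence at a time, the long exact sequence in \v Cech local cohomology — a bounded complex, hence in each degree a bounded long exact sequence of $k$-vector spaces — propagates finite dimensionality from $Z_i$ and $F_i$ to $Z_{i-1}$, and taking the alternating sum of $\dim_k$ along it together with $[F_i]=[Z_i]+[Z_{i-1}]$ gives the additivity relation $F_{F_i}=F_{Z_i}+F_{Z_{i-1}}$. Telescoping, $F_M=\sum_{i=0}^{p}(-1)^iF_{F_i}$, which lies in $\CC$ since each $F_{F_i}$ does and $\CC$ is closed under sums and negatives.

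For the reverse inclusion $\CC\subseteq\mathcal{A}$ I would use that $\CC$ is the \emph{smallest} set containing $F_R$ and stable under the three operations, so it is enough to check that $\mathcal{A}$ has those properties. It contains $F_R$ (take $s=0$, $M_0=R$). It is stable under addition: after padding the two lists of modules to a common length with zero modules, add them termwise, using $F_{M\oplus N}=F_M+F_N$ (additivity of $\dim_k$ and commutation of \v Cech cohomology with finite direct sums). It is stable under negation: $-\sum_{i=0}^{s}(-1)^iF_{M_i}=\sum_{i=0}^{s+1}(-1)^iF_{M_i'}$ with $M_0':=0$ and $M_i':=M_{i-1}$. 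It is stable under $F\mapsto F\{\gamma\}$, because $[M[\gamma]](g)=[M](\gamma+g)$ and $H^i_B(M[\gamma])=H^i_B(M)[\gamma]$ give $F_M\{\gamma\}=F_{M[\gamma]}$, with $M[\gamma]$ still finitely generated. Hence $\CC\subseteq\mathcal{A}$, and combining the two parts, $\CC=\mathcal{A}$.

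I do not expect a genuine obstacle; the proof is essentially an organized d\'evissage. The one point deserving care is logical rather than technical: $F_M$ only makes sense once one knows every graded piece of every $H^i_B(M)$ is finite dimensional, which is why the induction in the second paragraph must carry this finiteness alongside membership in $\CC$. One should also keep track throughout that the local cohomology in play is \v Cech cohomology, so that it is computed by a bounded complex, is exact on short exact sequences, and commutes with finite direct sums and with internal shifts — exactly the facts recalled in Section~2.
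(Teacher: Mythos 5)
Your proof is correct and follows the same underlying strategy as the paper's: reduce to a finite graded free resolution, prove that $F$ is additive along it, and observe that $F$ of a finite free module is an explicit sum of shifts of $F_R$, so $F_M=\sum_j(-1)^jF_{F_j}\in\CC$; the reverse inclusion is the easy verification that the class of alternating sums of $F_{M_i}$'s contains $F_R$ and is closed under the three operations. The only real difference is bookkeeping: where you d\'evissage the resolution into short exact sequences $0\to Z_i\to F_i\to Z_{i-1}\to 0$ and propagate both finiteness and additivity one long exact sequence at a time, the paper invokes the hypercohomology spectral sequence $H^i_B(F_j)\Rightarrow H^{i-j}_B(M)$ in one stroke to get both the finiteness of each $H^i_B(M)_\mu$ and the Euler-characteristic identity $\sum_{i,j}(-1)^{i-j}[H^i_B(F_j)]=\sum_\ell(-1)^\ell[H^\ell_B(M)]$. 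Your approach is more elementary (no spectral sequences needed) at the cost of a short induction; the paper's is more compact. Both are fully valid, and you correctly flagged the one subtlety common to both, namely that the finiteness of the graded pieces of $H^i_B(M)$ must be established along the way for $F_M$ to be well defined.
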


\begin{proof} First notice that any function in $\CC$ can be written in the form $\sum_{i=0}^{s} (-1)^iF_{M_{i}}$, with $M_i=R[\gamma_i]$ for some $i$.
On the other hand if $F_\bullet$ is a graded finite free $R$-resolution of $M$,
$[M]=\sum_j (-1)^j  [F_j]$ and the spectral sequence $H^i_B (F_j) \Rightarrow H^{i-j}_B (M)$
shows that  $H_B^i(M)_\mu$ is a finite dimensional vector space for any $\mu$ and that
$$
\sum_{i,j}(-1)^{i-j} [H_B^i(F_j)] =\sum_\ell (-1)^\ell [H_B^\ell (M)] .
$$
Since $F_j=\oplus_{q\in E_j} R[\gamma_{j,q}]$, it follows that 
\[
 F_M = \sum_j (-1)^j  [F_j] +\sum_{i,j}(-1)^{i-j} [H_B^i(F_j)] =  \sum_j (-1)^j \sum_{q\in E_j}F_{R}\{ \gamma_{j,q}\} \in \CC. \qedhere
\]
\end{proof}

Lemma \ref{lemClassCC} shows in particular that if $F_R$ is a numerical polynomial (resp.\ quasi-polynomial) of degree $d$, then the class $\CC$ is contained in the class of numerical polynomials (resp.\ quasi-polynomials) of degree at most $d$.

In the case $R:=k[X_{i,j}\ \vert\ 1\leq i\leq n, 0\leq j\leq r_i]$ is a standard $\ZZ^n$-graded polynomial ring over a field $k$, $\deg(X_{i,j})=e_i$, let $B:=\cap_{i=1}^{n} (X_{i,j},\  0\leq j\leq r_i) $. 
The following result due to G.\ Colom\'e i Nin generalizes \cite[Thm.\ 2.4]{JV02}, which considers the case $n=2$.

\begin{prop}\label{HF} \cite[Prop.\ 2.4.3]{ColTesis} Let $R$ be a standard $\ZZ^n$-graded Noetherian polynomial ring over $k$, $B$ be defined as above, and $M$ be a finitely generated graded $R$-module. For any $\mu \in \ZZ^n$, $H_B^i(M)_\mu$ is a finite dimensional vector space and there exists a numerical polynomial $P_M$ such that 
\[
 [M] (\mu ) =P_M (\mu )+\sum_i (-1)^i  [H_B^i(M)] (\mu ).
\]
\end{prop}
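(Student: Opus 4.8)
The plan is to derive Proposition~\ref{HF} as a direct consequence of Lemma~\ref{lemClassCC}, once we have identified the function $F_R$ for this particular ring and ideal $B$. The key observation is that the statement of Proposition~\ref{HF} is exactly the assertion that $F_M$ (in the notation of \eqref{eqFM}) is a numerical polynomial $P_M$ for every finitely generated graded $R$-module $M$, together with the finiteness of $\dim_k H^i_B(M)_\mu$. The finiteness is part of the conclusion of Lemma~\ref{lemClassCC} and its proof, so the whole content reduces to showing that the class $\CC$ attached to this $(R,B)$ is contained in the class of numerical polynomials.

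First I would compute $F_R$ explicitly. Here $R=k[X_{i,j}]$ is the Cox ring of $\PP^{r_1}\times\cdots\times\PP^{r_n}$, and $B=\bigcap_{i=1}^n(X_{i,j},\ 0\leq j\leq r_i)$ is the irrelevant ideal; $H^i_B(R)_\mu$ computes (up to the usual shift) the sheaf cohomology of a line bundle on this product of projective spaces, so $H^i_B(R)$ is the direct sum over $i=i_1+\cdots+i_n$ with each $i_\ell\in\{0,r_\ell+1\}$ of tensor products of the top/bottom local cohomologies of the individual polynomial rings $k[X_{\ell,0},\ldots,X_{\ell,r_\ell}]$. Using the Künneth-type splitting one reads off that for each $\mu=(\mu_1,\ldots,\mu_n)\in\ZZ^n$ the alternating sum $[R](\mu)-\sum_i(-1)^i[H^i_B(R)](\mu)$ factors as a product $\prod_{\ell=1}^n\binom{\mu_\ell+r_\ell}{r_\ell}$, the latter being the convention that $\binom{\mu_\ell+r_\ell}{r_\ell}$ equals the value of the degree-$r_\ell$ numerical polynomial $t\mapsto\binom{t+r_\ell}{r_\ell}$ at $t=\mu_\ell$ (this is the one-variable Serre computation, and it does give a genuine polynomial for \emph{all} $\mu_\ell\in\ZZ$, not just $\mu_\ell\geq 0$, precisely because the correction term $\sum_i(-1)^i[H^i_B]$ absorbs the discrepancy). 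Hence $F_R(\mu)=\prod_{\ell=1}^n\binom{\mu_\ell+r_\ell}{r_\ell}$ is a numerical polynomial in $\mu$ of degree $r_1+\cdots+r_n=\dim R-n$.

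Next, the set of numerical polynomials $G\to\ZZ$ is closed under addition, negation, and the shift operation $F\mapsto F\{\gamma\}$, and it contains $F_R$ by the previous step; hence by minimality $\CC$ is contained in the class of numerical polynomials, as recorded in the remark following Lemma~\ref{lemClassCC}. By Lemma~\ref{lemClassCC}, for any finitely generated graded $R$-module $M$ the function $F_M=\sum_{i=0}^s(-1)^iF_{M_i}$ is, at the same time, expressible as $\sum_j(-1)^j\sum_{q\in E_j}F_R\{\gamma_{j,q}\}$, and therefore lies in $\CC$; so $F_M$ is a numerical polynomial, call it $P_M$. Unwinding \eqref{eqFM}, this is exactly
\[
[M](\mu)=P_M(\mu)+\sum_i(-1)^i[H^i_B(M)](\mu),
\]
and the finiteness of each $[H^i_B(M)](\mu)$ is exactly the finiteness established in the proof of Lemma~\ref{lemClassCC} (via the resolution $F_\bullet$ and the spectral sequence $H^i_B(F_j)\Rightarrow H^{i-j}_B(M)$, each $F_j$ being a finite direct sum of shifts of $R$ for which finiteness is the standing hypothesis).

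The main obstacle is the explicit identification of $F_R$ with $\prod_\ell\binom{\mu_\ell+r_\ell}{r_\ell}$: one must be careful that the Künneth decomposition of $H^\bullet_B(R)$ over the factors, and the sign bookkeeping in the alternating sum, combine to give a product of one-variable polynomials valid for all $\mu\in\ZZ^n$. This is a finite, essentially classical computation (it is the multigraded Grothendieck--Serre calculation for a product of projective spaces), but it is the one place where something genuinely needs to be checked rather than quoted; everything after it is a formal application of Lemma~\ref{lemClassCC}. Alternatively, one can cite \cite[Prop.\ 2.4.3]{ColTesis} directly, as the statement already does, and simply note that Lemma~\ref{lemClassCC} furnishes the short proof promised in the introduction once $F_R$ is known to be polynomial.
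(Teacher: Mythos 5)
Your proposal is essentially the same as the paper's own argument: after Proposition~\ref{HF} the paper presents exactly this ``alternative proof,'' computing $F_R(a_1,\ldots,a_n)=\prod_\ell\binom{r_\ell+a_\ell}{r_\ell}$ explicitly and then invoking Lemma~\ref{lemClassCC} together with the closure of numerical polynomials under sums, negation and shifts. One small imprecision in your account of the decomposition of $H^\bullet_B(R)$: the indexing ``$i=i_1+\cdots+i_n$ with each $i_\ell\in\{0,r_\ell+1\}$'' is the K\"unneth formula for the \emph{sum} $R_+=B_1+\cdots+B_n$, whose \v Cech complex really is the tensor product of the factors' \v Cech complexes; for $B=B_1\cap\cdots\cap B_n$ the decomposition instead comes from iterated Mayer--Vietoris (as in the proof of Lemma~\ref{lemLCMonomial}, which the paper cites), and the piece with $s$ nonzero $i_\ell$'s lands in $H^{(\sum i_\ell)-(s-1)}_B(R)$, not in degree $\sum i_\ell$. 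This matters: the alternating sum taken with respect to $R_+$ is \emph{not} a polynomial in $\mu$, so the shift by $s-1$ is exactly what makes the signs collapse into the product $\prod_\ell\bigl([R_\ell]-(-1)^{r_\ell+1}[H^{r_\ell+1}_{B_\ell'}(R_\ell)]\bigr)=\prod_\ell\binom{\mu_\ell+r_\ell}{r_\ell}$. You flag the sign bookkeeping as the point that needs checking, and your final formula for $F_R$ is correct, so the argument goes through; just be aware that the correct mechanism here is Mayer--Vietoris for the intersection, not K\"unneth for the sum.
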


From Proposition \ref{HF} and the definition of $\reg_B$ we conclude that 

\begin{cor}
 Under the hypothesis of Proposition \ref{HF}, there exists a numerical polynomial $P_M$ such that 
\[
 [M] (\mu ) =P_M (\mu), \ \forall \mu\in \reg_B (M).
\]
\end{cor}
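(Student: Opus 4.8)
The plan is to derive this corollary directly from Proposition \ref{HF} together with the definition of $\reg_B(M)$. Proposition \ref{HF} already supplies a numerical polynomial $P_M$ with the property that
\[
 [M](\mu) = P_M(\mu) + \sum_i (-1)^i [H_B^i(M)](\mu)
\]
for every $\mu \in \ZZ^n$, and each $[H_B^i(M)](\mu)$ is a well-defined finite dimension. So the only thing left to check is that the correction term $\sum_i (-1)^i [H_B^i(M)](\mu)$ vanishes whenever $\mu \in \reg_B(M)$.

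First I would recall what $\mu \in \reg_B(M)$ means via Definition \ref{defRegLC}: in particular $M$ is weakly $\mu$-regular, so that
\[
 \mu \notin \bigcup_{i\geq 0} \Supp_\G(H_B^i(M)) + \FF_{i-1}.
\]
Since $0 \in \FF_{i-1}$ for $i \geq 1$ (as $\FF_0 = \{0\}$) and $\FF_{-1} = -\EE_1 \ni 0$ only requires $\gamma_j = 0$ — here one must be slightly careful, but the relevant point is simpler: taking the summand $i$ with the shift $0$, weak $\mu$-regularity forces $\mu \notin \Supp_\G(H_B^i(M))$ for all $i \geq 1$, and also $\mu \notin \Supp_\G(H_B^0(M)) + \FF_{-1}$. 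For $H^0_B$, the cleanest route is to observe that $\Supp_\G(H_B^0(M)) \subseteq \Supp_\G(H_B^0(M)) + \FF_{-1} + \FF_1$ is not what we want; instead note $H^0_B(M)_\mu = 0$ follows because $\mu \in \reg_B(M)$ means $M$ is weakly $\mu'$-regular for all $\mu' \in \mu + \Cc$, and in particular weakly $\mu$-regular gives $\mu \notin \Supp_\G(H^0_B(M)) + \FF_{-1}$, hence $\mu + \gamma_j \notin \Supp_\G(H^0_B(M))$ is false for reading directly — the direct statement is that $\mu \notin \Supp_\G(H^0_B(M))$ itself, since $0 \in \FF_{0}$ contributes the term $H^1_B$, while for $H^0_B$ one uses the shift set $\FF_{-1} = -\EE_1$; as $-\gamma_j \in \FF_{-1}$, weak regularity says $\mu + \gamma_j \notin \Supp_\G(H^0_B(M))$ for all $j$, and combined with the fact that $H^0_B(M)$ is the $B$-torsion submodule, so its support is stable under adding elements of $\Cc$, we conclude $\mu \notin \Supp_\G(H^0_B(M))$ as well. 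In all cases, $[H_B^i(M)](\mu) = \dim_k H_B^i(M)_\mu = 0$ for every $i$.

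Therefore the correction term is zero and $[M](\mu) = P_M(\mu)$ for all $\mu \in \reg_B(M)$, which is the claim. The main obstacle is purely bookkeeping: making sure that the shift sets $\FF_{i-1}$ appearing in Definition \ref{defRegLC} do force the vanishing of $H_B^i(M)_\mu$ for \emph{all} $i \geq 0$ (not just $i \geq 1$), and in particular handling the $i = 0$ case correctly using the fact that $\Supp_G(H^0_B(M))$ is $\Cc$-stable (being the support of a $B$-torsion submodule, and $B \subseteq R_+$). Once that point is settled, the corollary is immediate from Proposition \ref{HF}.
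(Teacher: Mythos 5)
Your reduction — show that the correction term $\sum_i(-1)^i[H^i_B(M)](\mu)$ vanishes for $\mu\in\reg_B(M)$ and then invoke Proposition \ref{HF} — is the right high-level move, and for $i\geq 1$ your conclusion $\mu\notin\Supp_\G(H^i_B(M))$ is correct, though the reason you give is off: $0\in\FF_{i-1}$ is false for $i\geq 2$ (the $\gamma_j$ are nonzero in general), so weak $\mu$-regularity alone is not enough. The repair is to use the full strength of $\gamma$-regularity: for any $\sigma\in\FF_{i-1}\subset\Cc$ one has $\mu+\sigma\in\mu+\Cc$, so weak $(\mu+\sigma)$-regularity gives $\mu+\sigma\notin\Supp_\G(H^i_B(M))+\FF_{i-1}$, whence $\mu=(\mu+\sigma)-\sigma\notin\Supp_\G(H^i_B(M))$.

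The genuine gap is the case $i=0$, and the step you lean on — that $\Supp_\G(H^0_B(M))$ is $\Cc$-stable because $H^0_B(M)$ is $B$-torsion with $B\subseteq R_+$ — is false. Being $B$-torsion makes multiplication by each variable \emph{locally nilpotent} on $H^0_B(M)$, so its support tends to be bounded, the opposite of $\Cc$-stable. Concretely, take $R=k[X_0,X_1]$ standard graded, $B=R_+$, $M=k=R/R_+$: then $H^0_B(k)=k$ concentrated in degree $0$ and all higher $H^i_B(k)$ vanish, so $\Supp_\G(H^0_B(k))=\{0\}$, which is certainly not $\Cc$-stable, while the definition gives $\reg_B(k)=\ZZ_{\geq 0}$ since the only obstruction set is $\Supp_\G(H^0_B(k))+\FF_{-1}=\{-1\}$. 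Here $P_k\equiv 0$, yet $[k](0)=1$, so the identity $[M](\mu)=P_M(\mu)$ fails at $\mu=0\in\reg_B(k)$. What your argument (correctly carried out) actually proves is $[M](\mu)=P_M(\mu)$ for $\mu\in\reg_B(M)+\Cc^*$, equivalently for $\mu\in\reg_B(M)$ with $\mu\notin\Supp_\G(H^0_B(M))$; the boundary degrees where $H^0_B(M)$ is still nonzero are exactly what the $\Cc$-stability claim would need to exclude, and it cannot.
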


An alternative proof of Proposition \ref{HF} is given by Lemma \ref{lemClassCC} and the following result
\begin{lem}
With the above notations,
\[
 F_R(a_1,\hdots,a_s)=\prod_{1\leq i\leq s}\binom{r_i+a_i}{r_i}.
\]
and $\CC$ is the set of numerical polynomials of multidegree $\leq (r_1,\ldots ,r_n)$.
\end{lem}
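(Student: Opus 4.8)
The plan is to compute $F_R$ explicitly via sheaf cohomology on the multiprojective variety $X:=\PP^{r_1}_k\times\cdots\times\PP^{r_n}_k$, whose Cox ring is $(R,\ZZ^n,B)$, and then to deduce the description of $\CC$ by an elementary finite-difference argument. For $F_R$ I would invoke the standard comparison between $B$-local cohomology of $R$ and sheaf cohomology of the twists $\OO_X(\mu)$ (cf.\ \cite{Mus02}): since $R$ is a domain and $B\neq 0$ one has $H^0_B(R)=0$; the natural map $R_\mu\to H^0(X,\OO_X(\mu))$ is an isomorphism for every $\mu\in\ZZ^n$ (both sides vanish outside $\ZZ^n_{\geq 0}$, and on $\ZZ^n_{\geq 0}$ both have dimension $\prod_i\binom{r_i+\mu_i}{r_i}$), so $H^1_B(R)=0$; and $H^i_B(R)_\mu\cong H^{i-1}(X,\OO_X(\mu))$ for $i\geq 2$. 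Substituting into \eqref{eqFM} collapses the alternating sum to $F_R(\mu)=\sum_{j\geq 0}(-1)^j\dim_k H^j(X,\OO_X(\mu))=\chi(X,\OO_X(\mu))$. As $\OO_X(\mu)$ is the external product of the $\OO_{\PP^{r_i}}(\mu_i)$, the Künneth formula makes $\chi$ multiplicative, so $\chi(X,\OO_X(\mu))=\prod_i\chi(\PP^{r_i}_k,\OO(\mu_i))$, and $\chi(\PP^r_k,\OO(a))=\binom{r+a}{r}$ for every $a\in\ZZ$ by the classical computation ($\dim_k H^0$ for $a\geq 0$, zero for $-r\leq a\leq -1$, and $(-1)^r\binom{-a-1}{r}=(-1)^r\dim_k H^r$ for $a\leq -r-1$, each agreeing with the polynomial value $\binom{r+a}{r}$). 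This gives $F_R(a_1,\dots,a_n)=\prod_i\binom{r_i+a_i}{r_i}$, manifestly a numerical polynomial of multidegree exactly $(r_1,\dots,r_n)$. (A sheaf-free variant inducts on the number of blocks through the Mayer--Vietoris mechanism behind Lemma \ref{lemLCMonomial}, using that the local cohomology of $R$ with support in an ideal generated by a subset of the variables is concentrated in multidegrees with a negative coordinate; it is more laborious.)

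For the statement about $\CC$, observe that, by Lemma \ref{lemClassCC} and the definition, $\CC$ equals the $\ZZ$-span of the translates $F_R\{\gamma\}\colon a\mapsto\prod_i\binom{r_i+a_i+\gamma_i}{r_i}$, $\gamma\in\ZZ^n$ (translation commutes with sum and negation). Each translate is a numerical polynomial of multidegree $\leq(r_1,\dots,r_n)$ and these form a $\ZZ$-submodule, so $\CC$ is contained in that set of polynomials; this inclusion is essentially the remark following Lemma \ref{lemClassCC}. For the reverse inclusion, recall that the numerical polynomials of multidegree $\leq(r_1,\dots,r_n)$ form the free $\ZZ$-module $V_1\otimes_\ZZ\cdots\otimes_\ZZ V_n$, where $V_i$ is the module of numerical polynomials in $a_i$ of degree $\leq r_i$, with $\ZZ$-basis $\{\prod_i\binom{a_i}{k_i}:0\leq k_i\leq r_i\}$. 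The forward difference in the $i$-th variable gives $\binom{r_i+a_i+c+1}{r_i}-\binom{r_i+a_i+c}{r_i}=\binom{r_i+a_i+c}{r_i-1}$, so by descent the $\ZZ$-span of $\{\binom{r_i+a_i+c}{r_i}:c\in\ZZ\}$ contains each $\binom{a_i}{k_i}$ with $k_i\leq r_i$ and hence equals $V_i$; since a product of spanning sets of the factors spans the tensor product (multilinear expansion), the translates $F_R\{\gamma\}$ span $V_1\otimes_\ZZ\cdots\otimes_\ZZ V_n$. Thus $\CC$ is precisely the set of numerical polynomials of multidegree $\leq(r_1,\dots,r_n)$.

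The only non-elementary ingredient is the identification of $H^i_B(R)_\mu$ with the sheaf cohomology $H^{i-1}(X,\OO_X(\mu))$ for $i\geq 2$ (together with the vanishing of $H^0_B(R)$ and $H^1_B(R)$), which is what turns $F_R$ into the Euler characteristic $\chi(\OO_X(\mu))$ and, in particular, into a numerical polynomial; granting this, the rest is the classical $\PP^r$ computation propagated through a Künneth formula, and the $\CC$ statement is linear algebra over $\ZZ$. If instead one wants a proof staying within the elementary framework of Section 2, the cost is the more computational direct determination of the modules $H^i_B(R)$ (via Musta{\c{t}}\u{a}'s Taylor-complex description or a block-induction), which is where all the bookkeeping would sit.
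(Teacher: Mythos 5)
Your proof is correct, but on both halves you take a genuinely different route from the paper. For $F_R$, the paper computes the alternating sum in \eqref{eqFM} directly from the explicit Mayer--Vietoris/Taylor decomposition of $H^\ell_B(R)$ into local cohomologies supported on sub-sums of the blocks $B_i$ (the structure underlying Lemma \ref{lemLCMonomial}), and then collapses the sum using the binomial reflection identity $\binom{r+(-a-r-1)}{r}=(-1)^r\binom{r+a}{r}$; you instead pass to $X=\PP^{r_1}\times\cdots\times\PP^{r_n}$, identify $F_R(\mu)$ with the Euler characteristic $\chi(X,\OO_X(\mu))$ via the standard dictionary $H^i_B(R)_\mu\cong H^{i-1}(X,\OO_X(\mu))$ for $i\geq 2$ and the vanishing $H^0_B(R)=H^1_B(R)=0$, and then apply K\"unneth. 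These are equivalent computations dressed differently: the paper's version is self-contained and purely algebraic (no recourse to [Mus02] or to sheaves), while yours is conceptually tighter and, once $F_R=\chi$ is established, the multiplicativity comes for free. For the characterization of $\CC$, the paper simply invokes \cite[Thm.\ 2.1.7]{Rob98}, whereas you give a direct finite-difference argument: the identity $\binom{r+a+c+1}{r}-\binom{r+a+c}{r}=\binom{r+a+c}{r-1}$ and descent show that the translates of $\binom{r+a}{r}$ $\ZZ$-span all numerical polynomials of degree $\leq r$ in one variable, and the multilinear expansion carries this over to the tensor product. This is a genuine improvement in self-containment over the paper's citation, and the argument is correct, including the small point that spanning sets of free $\ZZ$-modules multiply to a spanning set of the tensor product. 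The only ingredient you use that the paper sidesteps is the local-to-sheaf cohomology comparison; you acknowledge this and sketch the sheaf-free alternative, which is essentially the paper's own route.
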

\begin{proof}
The computation of $F_R$ follows from the explicit description of $H_B^\ell (R)$ as a direct sum of local cohomology with support on ideals generated by variables (see the proof of Lemma \ref{lemLCMonomial}, or \cite[Lem.\ 6.4.7]{BotPhD} for  more details), and the fact that  
\[
\binom{r+(-a-r-1)}{r}=(-1)^{r}\binom{r+a}{r}.
\]
The second claim comes from the first and \cite[Thm.\ 2.1.7]{Rob98}.
\end{proof}

Proposition \ref{HF} can be extended in several direction, for instance to a product of anisotropic projectives or when set set of degrees of variables has linearly independent elements (see \cite[Prop.\ 2.4.3]{ColTesis}). Using a result of Sturmfels on vector partition function  \cite{Stu94}, Maclagan and Smith also treated the case of a smooth toric variety in \cite[Lem.\ 2.8 and Prop.\ 2.14]{MS05}.


\def\cprime{$'$}


\end{document}